\title{Infinite dimensional families  of Calabi--Yau threefolds \\
and moduli of vector bundles}
\author{Edoardo Ballico, Elizabeth Gasparim, {\small and} Bruno Suzuki}
\address{Ballico - Dept. Mathematics, University of Trento, I-38050 Povo, Italy.\newline
Gasparim; Suzuki -  . Matem\'aticas, Univ. Cat\'olica del Norte, Antofagasta, Chile. 
\newline
 ballico@science.unitn.it, etgasparim@gmail.com, obrunosuzuki@gmail.com}
\newtheorem{theorem}{Theorem}[section]
\newtheorem*{theorem*}{Theorem}
\newtheorem{proposition}[theorem]{Proposition}
\newtheorem{lemma}[theorem]{Lemma}
\newtheorem*{lemma*}{Lemma}
\newtheorem{corollary}[theorem]{Corollary}
\newtheorem*{corollary*}{Corollary}
\newtheorem*{conjecture*}{Conjecture}
\theoremstyle{definition}
\newtheorem{definition}[theorem]{Definition}
\newtheorem{question}[theorem]{Problem}
\newtheorem{example}[theorem]{Example}
\newtheorem{notation}[theorem]{Notation}
\theoremstyle{remark}
\newtheorem{remark}[theorem]{Remark}
\newcommand{\ce}{\mathrel{\mathop:}=}
\DeclareMathOperator{\Bun}{Bun}
\DeclareMathOperator{\Ext}{Ext}
\DeclareMathOperator{\End}{End}
\DeclareMathOperator{\EEnd}{\mathcal{E}nd}
\DeclareMathOperator{\Hom}{Hom}
\DeclareMathOperator{\Pic}{Pic}
\DeclareMathOperator{\Tot}{Tot}
\DeclareMathOperator{\HH}{\mathrm{H}}
\DeclareMathOperator{\Spec}{Spec}
\DeclareMathOperator{\coker}{coker}
\begin{document}

\begin{abstract} We study noncompact Calabi--Yau threefolds,
their moduli spaces of vector bundles and   deformation theory. 
We present Calabi--Yau threefolds that have infinitely many  distinct deformations,
constructing them explicitly, 
 and describe the effect that such deformations produce on moduli spaces of vector bundles.
 
\noindent \textbf{MSC}: 32G05, 32G08, 32Q25.
\end{abstract}

\maketitle

\tableofcontents

\section*{Introduction}

 Deformation theory of some noncompact surfaces was described in  \cite{BG}, 
together with the effect that such deformations  produce on  moduli spaces of vector bundles. 
In this work we explore the questions of deformations and moduli for 
noncompact Calabi--Yau threefolds. We will see that both deformations and 
moduli present a much richer behaviour in the case of threefolds, producing large families 
of Calabi--Yaus which hold nontrivial moduli spaces of vector bundles (in contrast to 
the case of surfaces, where commutative deformations destroyed such moduli  \cite[Thm.\thinspace 6.14]{BG}). 
We construct families of  threefolds  admitting  infinitely many isomorphism classes of  deformations, 
some of which support nontrivial moduli of vector bundles (Cor.\thinspace\ref{fedmoduli}).
 
 Our motivation to study deformations of noncompact complex varieties comes from mathematical physics, 
 and the persistence of moduli spaces under deformations allows for various interpretations in physics,
where Calabi--Yau   threefolds appear  very often in different contexts. 
For example, deformations of  CY threefolds enter as terms in  the integrals defining
 the action of the theories of Kodaira--Spencer gravity \cite{B}. 
Furthermore, the counting of  BPS states can be computed efficiently on toric CY threefolds, 
as we have  considered  in \cite{GKMR} and \cite{GSTV}. 
 (Note that toric CY threefolds are necessarily noncompact.)  
 Perhaps the most striking occurrences of CY threefolds are in the 
 original formulation of the Mirror Symmetry Conjecture, where 
 a CY threefold $X$ is conjectured to have a mirror CY partner 
 whose Hodge diamond is obtained from the one of $X$ by 
 reflection on the $45^o$ line. These too are to be considered together 
 with their deformation families. 
  
Here we consider
smooth  Calabi--Yau threefolds $W_k$ containing a line $\ell \cong \mathbb{P}^1$.
For the applications we have in mind for future work it will be useful to observe the effect of contracting 
the line to a singularity. 
The existence of a contraction of $\ell$ imposes heavy restrictions on
the normal bundle \cite{ji}, namely $N_{\ell/W}$ must be isomorphic to
one of
\[ \text{(a) \ } \mathcal{O}_{\mathbb{P}^1}(-1) \oplus \mathcal{O}_{\mathbb{P}^1}(-1)
   \text{ ,}\quad \text{(b) \ } \mathcal{O}_{\mathbb{P}^1}(-2) \oplus \mathcal{O}_{\mathbb{P}^1}(0)
   \text{ , \ or \ } \text{(c) \ } \mathcal{O}_{\mathbb{P}^1}(-3) \oplus \mathcal{O}_{\mathbb{P}^1}(+1)
   \text{ .} \]
$W_1$ is the space appearing in the {basic flop}. 
 It is famous in algebraic geometry for being the  simplest example of 
a rational map that is not a blow-up. 
 
We will focus on  the Calabi--Yau threefolds
\[ W_k \ce \Tot\bigl(\mathcal{O}_{\mathbb{P}^1}(-k) \oplus \mathcal{O}_{\mathbb{P}^1}(k-2)\bigr) \text{ for $k\geq 1$.} \]
We will also consider surfaces of the form
$$ Z_k \ce \Tot\bigl(\mathcal O_{\mathbb P^1}(-k)\bigr) .$$

Our main contributions are  descriptions of the deformation theory of such varieties, 
constructing   infinitely many  non-isomorphic deformations  of $W_k$ whenever $k>1$ (see Cor.\thinspace \ref{Wkinf}), and presenting 
the effects that deformations of these threefolds have on their moduli spaces  
of holomorphic vector bundles (see Thm.\thinspace \ref{decrease}). 
We present some nontrivial holomorphic maps between the  deformation spaces (see Thm.\thinspace \ref{maps}).
We also discuss 
deformations from the point of view of affine bundles on $\mathbb P^1$, 
obtaining infinitely many of them for a fixed threefold (see Thm. \thinspace \ref{afbundles}). 

We  make use of  a new definition of commutative deformation, first presented in 
\cite{GKRS}, which is well suited to fit the needs of the noncompact case. 
Our technique to find deformations goes as follows.
Even though there is no well established  deformation theory for noncompact manifolds, 
 we obtain deformations by
working in analogy with Kodaira's theory for the compact case, see \cite{Ko}. Namely, we 
 calculate cohomology with  coefficients in the tangent bundle, and then we proceed 
 to identify which of such directions of infinitesimal deformations  are integrable.
 
 Naturally, when studying total spaces of rank 2 bundles on the complex line, 
 one first ought to  review the corresponding 2 dimensional case, namely, that of  total spaces of rank 1 bundles.
  Hence,  before attacking the case of  threefolds, 
 we first recall the results proved for surfaces.
In the case of  the complex surfaces $Z_k$, with $ k>0$,  the main results about their commutative deformations are:
\begin{enumerate}
\item[S1.] \cite{GKRS} proved that  deformations of the surfaces $Z_k$ can be obtained from the deformations
 of the (compact) Hirzebruch surfaces  $\mathbb{F}_k$, and
 \item[S2.]   \cite{BG} proved that every nontrivial deformation of $Z_k$ is affine.
 The latter  in turn has as immediate corollary:
 \item[S3.]   \cite[Thm.\thinspace 6.14]{BG} showed that  moduli spaces of vector bundles fall to dimension 0 whenever $Z_k$ is deformed 
 classically. 
 \end{enumerate}

In this work we shall prove that the 3 dimensional analogues of results S1, S2, S3 are all false. Indeed, we prove that for $k>1$:

\begin{enumerate}
\item[T1.] Deformation  of the CY threefolds $W_k$ are not obtained from  deformations
 of their compactifications; this follows directly  from Cor. \ref{distinct}. 
 \item[T2.]   $W_k$ has nontrivial deformations which are not affine, see Cor. \ref{notaffine}.
 \item[T3.]   Deformations of $W_k$ can  hold positive dimensional  
  moduli spaces of vector bundles, with some nontrivial deformations preserving  all of the dimensions of moduli
  and others preserving fewer (or none)  of the positive dimensions 
  of the original moduli, see Thm. \ref{decrease}.
 \end{enumerate}

 In this work we consider only classical, i.e. commutative, deformations leaving noncommutative deformations 
for future work. 
 In the case of surfaces \cite{BG2} showed that the effect of noncommutative deformations of the surfaces $Z_k$ 
on moduli of vector bundles is quite the opposite of the statement of property S3, in that, noncommutative 
deformations can have the effect of enlarging the moduli spaces of vector bundles. 
 We expect that  
a similar phenomenon might occur for threefolds, but details remain to be explored later.
 The study of noncommutative deformations requires very different techniques from those considered here. 
 This paper is part of the PhD. Thesis of B. Suzuki at Universidad Cat\'olica del Norte, Chile.

\section{Deformations of noncompact manifolds}

Classical deformation  theory is well understood in the compact case, as explained in the beautiful
  textbook  of Kodaira \cite{Ko}. However, a general theory for the noncompact case is  lacking. 
   In joint work with K\"oppe and Rubilar \cite{GKRS} we studied some features of deformation 
 theory for noncompact Calabi--Yau threefolds,  and we gave a new definition of deformation of complex structure,
 which proved useful. We now recall the basic definitions:
 
 \begin{definition}\label{def}
	A {\it deformation} of a complex manifold $X$ is a holomorphic surjective submersion $\tilde{X}\stackrel{\pi}{\rightarrow}D$, where $D$ is a complex disc centered at $0$ (possibly a vector space, possibly infinite dimensional), satisfying:
	\begin{itemize}
		\item $\pi^{-1}(0)=X$,
		\item $\tilde{X}$ is locally trivial in the $C^{\infty}$  category.
	\end{itemize}
The fibers $X_t \ce \pi^{-1}(t)$ are called \emph{deformations} of $X$.
\end{definition}

\begin{remark}
	Our choice for the dimension of $D$ is $n=h^1(X,TX)$ whenever possible. The case $n=0$ corresponds to the following definition:
\end{remark}

\begin{definition}\label{formal}
We call a manifold $X$ {\it formally rigid} when $\HH^1(X, TX)=0$.
\end{definition}


\begin{definition}\label{rigid}
We call a manifold $X$ {\it rigid} if any deformation 
$\tilde{X}\stackrel{\pi}{\rightarrow}D$ 
is biholomorphic to the trivial bundle $X \times D \to D$.
\end{definition}

 It is too early to say for sure how this  new
 concept will converge to a permanent one;  it ought to  accommodate several 
 refinements and  improvements, for instance
allowing for more general base spaces, and allowing for singularities such as 
orbifold singularities as in \cite{LZ} or toric degenerations such as in \cite{GS}.
 Developing a solid theoretical background is a fundamental  goal of our work in this theme, 
but clearly a large collection of examples needs to be studied first. 
Here we apply Definition \ref{def} to  CY threefolds that are the total space of vector bundles on the projective line,
and construct deformations corresponding to elements of first cohomology with coefficients in the tangent bundle.
 We will see that this method produces large families of  deformations. 

\subsection{The Calabi--Yau threefolds $W_k$}

Let us now describe those Calabi--Yau threefolds which are the total spaces of vector bundles on the projective line
viewed as manifolds.

\begin{definition}\label{WKdef}For $k \geq 1$, we set
\[
W_k = \Tot (\mathcal{O}_{\mathbb{P}^1}(-k) \oplus \mathcal{O}_{\mathbb{P}^1}(k-2)).
\]
The complex manifold structure  can be described by gluing the open sets 
$$U = \mathbb{C}^3_{\{z,u_1,u_2\}}  \quad \mbox{and} \quad  V = \mathbb{C}^3_{\{\xi,v_1,v_2\}}$$ 
by the relation
\begin{equation}\label{canonical}
\boxed{
(\xi, v_1, v_2) = (z^{-1}, z^k u_1, z^{-k+2} u_2)
}
\end{equation}
whenever $z$ and $\xi$ are not equal to 0.
We call (\ref{canonical}) the canonical coordinates for $W_k$.
\end{definition}

We will use $\HH^1(W_k,TW_k)$ to find deformations of $W_k$ even though we do not know if it will provide all deformations
fitting into definition \ref{def}.

\begin{example}
$W_1$ is formally rigid. This was proved in 
\cite{R} by direct calculation showing that
$\HH^1(W_1, TW_1)=0$.
\end{example}
Interestingly, $k=1$ is the only such case, and  for all other values of $k$ the first cohomology groups with tangent coefficients are infinite dimensional. 

\subsection{Infinitely many deformations of $W_2$}

In this section we prove Thm. \ref{W2def}
showing that  the deformation space of $W_2$ contains  
 infinitely many  distinct  isomorphism types of complex threefolds.
 
 We observe that $W_2= Z_2\times \mathbb C$ is the product of a Calabi--Yau surface by affine space, 
 but the surface $Z_2$ has one single nontrivial deformation, which is affine (see \cite{BG}). Therefore 
 any  deformation of the CY threefold $W_2$ that is not affine does not come from deforming the CY surface $Z_2$.

\begin{example}
Computing cohomology with tangent coefficients produces a large and nontrivial family of deformations of $W_2$.
\cite{GKRS} computed an infinite-dimensional family of deformations $\mathcal{W}_2$ of $W_2$
whose elements can be described 
 by gluing $U = \mathbb{C}^3_{\{z, u_1,u_2\}}$ and $V = \mathbb{C}^3_{\{\xi, v_1,v_2\}}$ with the following relations:
\begin{equation} \label{W2family}
 \boxed{ (\xi, v_1, v_2) = \left(z^{-1}, z^2u_1 + \sum_{s \geq 0} t_s z u_2^s, u_2\right) } \text{ .} 
\end{equation}
Deformations are obtained by varying the parameters $t_s$.
\cite{GKRS} proved that this family is nontrivial by showing that it contains both affine and non-affine deformations. 
\end{example}
We now focus on a class of such deformations of $W_2$ indexed by an integer $y$.

\begin{notation} Fix an integer $y \geq 0$.
We  denote by $\mathcal{W}_2(y)$ the deformation of $W_2$ obtained by gluing the charts

\begin{center}
$U=\mathbb{C}^3_{\{z, u_1, u_2\}}$ and $V=\mathbb{C}^3_{\{\xi, v_1, v_2\}}$
\end{center}
with the relation
\begin{align*}
(\xi, v_1, v_2) & = (z^{-1}, z^2u_1 + zu_2^y, u_2)
\end{align*}
 for $z\neq 0$.
\end{notation}

We  show that this family contains infinitely many distinct manifolds, that is, we prove 
that there are infinitely many complex  isomorphism types in \eqref{W2family}.
Hence, we wish to show that for $y_1\neq y_2$ the threefolds 
$W_2(y_1) $ and $W_2(y_2)$ are not isomorphic. Accordingly, we
compute $\HH^1(\mathcal{W}_2(y),T\mathcal{W}_2(y))$.

\begin{lemma} Fix $y \geq 0$. 
The cohomology group $\HH^1(\mathcal{W}_2(y),T\mathcal{W}_2(y))$ is generated as a complex 
vector space by the classes $\sigma_s = \left[\begin{matrix} 0 & z^{-1}u_2^s & 0 \end{matrix}\right]^T$, with $s \geq 0$.
\end{lemma}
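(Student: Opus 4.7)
The plan is to compute $\HH^1(\mathcal{W}_2(y), T\mathcal{W}_2(y))$ via \v{C}ech cohomology for the cover $\mathfrak{U} = \{U, V\}$. Since $U \cong V \cong \mathbb{C}^3$ and $U \cap V \cong \mathbb{C}^* \times \mathbb{C}^2$ are Stein and $T\mathcal{W}_2(y)$ is locally free, this cover is Leray acyclic, so a class in $\HH^1$ is represented by a holomorphic vector field $\eta = a\,\partial_z + b\,\partial_{u_1} + c\,\partial_{u_2}$ on $U \cap V$ modulo coboundaries $\eta_V - \eta_U$ with $\eta_U \in \Gamma(U, T)$ and $\eta_V \in \Gamma(V, T)$.

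First I would differentiate the gluing together with its inverse $(z, u_1, u_2) = (\xi^{-1}, \xi^2 v_1 - \xi v_2^y, v_2)$ to write a $V$--frame vector field $\tilde a\,\partial_\xi + \tilde b\,\partial_{v_1} + \tilde c\,\partial_{v_2}$ in the $U$--frame as
\[
-z^2\tilde a|_U\,\partial_z + \bigl[(2zu_1+u_2^y)\tilde a|_U + z^{-2}\tilde b|_U - yz^{-1}u_2^{y-1}\tilde c|_U\bigr]\partial_{u_1} + \tilde c|_U\,\partial_{u_2}.
\]
The pivotal identity, read off the inverse gluing, is $(\xi^2 v_1 - \xi v_2^y)|_U = u_1$: it gives $\xi^{-\alpha}(\xi^2 v_1 - \xi v_2^y)^\beta v_2^\gamma|_U = z^\alpha u_1^\beta u_2^\gamma$ for all $\alpha, \beta, \gamma$, and the left-hand side is a polynomial in $\xi, v_1, v_2$ (hence lies in $\Gamma(V, \mathcal O)$) exactly when $\beta - \alpha \geq 0$.

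I then reduce $\eta$ modulo coboundaries in three stages. \emph{Step A (kill $a$):} writing $a = \sum a_{ijk} z^i u_1^j u_2^k$, take $a_U = -a|_{i \geq 0} \in \Gamma(U, T)$ and $\tilde a = -\sum_{i<0} a_{ijk}\,\xi^{2-i}(\xi^2 v_1 - \xi v_2^y)^j v_2^k \in \Gamma(V, \mathcal O)$; the resulting coboundary annihilates $a$, leaves $c$ intact, and shifts $b$ by $-(2zu_1+u_2^y)\tilde a|_U$. \emph{Step B (kill $c$):} the analogous data $\tilde c = \sum_{i<0} c_{ijk}\,\xi^{-i}(\xi^2 v_1 - \xi v_2^y)^j v_2^k$ and $c_U = -c|_{i \geq 0}$ kill $c$ and add $yz^{-1}u_2^{y-1}\tilde c|_U$ to $b$. \emph{Step C (normalize $b$):} subtract $b_U = -b|_{i \geq 0}$; for every remaining monomial $z^i u_1^j u_2^k$ with $i<0$ and $(i,j) \neq (-1,0)$, one has $j \geq i+2$, so $\tilde b = \xi^{-i-2}(\xi^2 v_1 - \xi v_2^y)^j v_2^k$ lies in $\Gamma(V, \mathcal O)$ and produces $z^{-2}\tilde b|_U = z^i u_1^j u_2^k$, making this term a coboundary. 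The only monomials that cannot be cancelled are $z^{-1}u_2^s\,\partial_{u_1} = \sigma_s$ with $s \geq 0$, which therefore generate $\HH^1(\mathcal{W}_2(y), T\mathcal{W}_2(y))$.

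The principal technical point is convergence of the infinite sums defining $\tilde a, \tilde b, \tilde c$ as honest holomorphic functions on $V \cong \mathbb{C}^3$. This follows from the fact that a function holomorphic on $\mathbb{C}^* \times \mathbb{C}^2$ has Laurent coefficients with $\limsup_{i \to -\infty} |a_{ijk}|^{1/|i|} = 0$, i.e., super-exponential decay in $|i|$, which dominates the growth $|\xi|^{|i|+C}$ appearing in each term for any fixed $\xi$. The tracking of successive modifications of $b$ across Steps A and B is purely bookkeeping, since Step C accommodates an arbitrary holomorphic $b$ on $U \cap V$.
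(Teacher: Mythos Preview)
Your proof is correct and follows essentially the same \v{C}ech computation as the paper: the paper first strips off all $U$-holomorphic terms, applies the Jacobian $\mathbb{J}$ to pass to the $V$-frame, and then strips off all $V$-holomorphic terms in one pass, whereas you organize the same reduction component by component (kill $a$, then $c$, then reduce $b$), using a mix of $U$- and $V$-coboundaries at each stage. Your explicit discussion of the convergence of the infinite series defining $\tilde a,\tilde b,\tilde c$ is a point of rigor that the paper leaves implicit.
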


\begin{proof}

The transition matrix for the tangent bundle of $\mathcal{W}_2(y)$ is given  by
\[
{\mathbb J}=\left[\begin{matrix}
-z^{-2} & 0 & 0 \\
2zu_1 + u_2^y & z^2 & yzu_2^{y-1} \\
0 & 0 & 1
\end{matrix}\right].
\] 
A 1-cocycle with coefficients in $T\mathcal{W}_2(y)$ may be expressed in $U$ coordinates by
\[
\sigma = \sum_{l =-\infty}^\infty \sum_{i = 0 }^\infty \sum_{s = 0 }^\infty  
\left[\begin{matrix}
 \alpha^1_{lis} \\
 \alpha^2_{lis} \\
 \alpha^3_{lis} \\
\end{matrix}\right]
z^l u_1^i u_2^s .
\] 
We will omit the indices $lis$ from the coefficients $\alpha^1, \alpha^2, \alpha^3$ to simplify notation.
Since monomials having nonnegative powers of $z$ 
are holomorphic on the $U$-chart, we have
\[
\sigma \sim \sum_{l \leq -1 } \sum_{i \geq 0 } \sum_{s \geq 0 }  
\left[\begin{matrix}
 \alpha^1 \\
 \alpha^2 \\
 \alpha^3 \\
\end{matrix}\right]
z^l u_1^i u_2^s,
\] 
where $\sim$ 
denotes cohomological equivalence.
 Changing coordinates:
\begin{align*}
{\mathbb J}\sigma & = \sum_{l \leq -1} \sum_{i \geq 0} \sum_{s \geq 0} 
\left[\begin{matrix}
 -\alpha^1z^{-2} \\
\alpha^1(2zu_1 + u_2^y) + \alpha^2 z^2 + \alpha^3 yzu_2^{y-1} \\
 \alpha^3 \\
\end{matrix}\right]  
z^l u_1^i u_2^s\\
& = \sum_{l \leq -1} \sum_{i \geq 0} \sum_{s \geq 0} 
\left[\begin{matrix}
 -\alpha^1\xi^2 \\
2\alpha^1\xi v_1 + \alpha^2\xi^{-2} + \alpha^3 y \xi v_2^{y-1} \\
 \alpha^3 \\
\end{matrix}\right]
\xi^{-l} (\xi^2v_2-\xi v_2^y)^i v_2^s.
\end{align*}
As the monomials that are holomorphic on $V$ are cohomologous to 0, we obtain that
\begin{align*}
{\mathbb J} \sigma & \sim \sum_{s \geq 0} \alpha^2_{-10s} 
\left[\begin{matrix}
0 \\
\xi^{-1}v_2^s \\
 0 \\
\end{matrix}\right], 
\end{align*}
which proves the lemma.
\end{proof}

Some of the 1-cocycles $\sigma_s$  may still be null-cohomologous. The result depends on the specific deformation $\mathcal{W}_2(y)$ we consider.

\begin{lemma} Fix $y \geq 0$. 
The class of $\sigma_s$ in $\HH^1(\mathcal{W}_2(y),T\mathcal{W}_2(y))$ is cohomologous to 0 if $s \geq y-1$.
\end{lemma}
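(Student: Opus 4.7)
The plan is to exhibit $\sigma_s$ as a \v{C}ech coboundary for the cover $\{U,V\}$: I would produce holomorphic vector fields $\tau_U\in\Gamma(U,T\mathcal{W}_2(y))$ and $\tau_V\in\Gamma(V,T\mathcal{W}_2(y))$ whose difference on the overlap, read in a common local frame, equals $\sigma_s$.

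The shape of the ansatz is dictated by the transition matrix $\mathbb{J}$. The off-diagonal entry $yzu_2^{y-1}$ in position $(2,3)$ is the only mechanism that couples the third coordinate direction with the middle slot, which is where $\sigma_s$ is concentrated. This suggests trying the simplest option consistent with this coupling, namely taking $\tau_U$ and $\tau_V$ supported only in the third component, with parallel functional form on each chart:
\[
\tau_U \ce \bigl(0,\,0,\,-\tfrac{1}{y}\,u_2^{s-y+1}\bigr)^{T}, \qquad
\tau_V \ce \bigl(0,\,0,\,-\tfrac{1}{y}\,v_2^{s-y+1}\bigr)^{T}.
\]
Both are holomorphic on their respective charts precisely when $s-y+1\geq 0$, which is the hypothesis $s\geq y-1$ of the lemma (and requires $y\geq 1$ for the factor $1/y$ to be defined; the trivial case $y=0$ can be handled by a parallel ansatz supported in the first slot).

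The verification then reduces to a short direct computation with $\mathbb{J}$: the middle entry of $\mathbb{J}\tau_U$ is $yzu_2^{y-1}\cdot(-\tfrac{1}{y}u_2^{s-y+1}) = -zu_2^s = -\xi^{-1}v_2^s$, and the third entry is $-\tfrac{1}{y}v_2^{s-y+1}$, so that $\tau_V-\mathbb{J}\tau_U = (0,\,\xi^{-1}v_2^s,\,0)^{T}$, which is exactly $\sigma_s$ expressed in the $V$-frame. This realizes $\sigma_s$ as a \v{C}ech coboundary and shows that its cohomology class vanishes. I do not foresee any real obstacle; the only bookkeeping is to make sure the sign convention for the coboundary is consistent with the one used implicitly in the preceding lemma.
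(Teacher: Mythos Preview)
Your proposal is correct and follows the same overall strategy as the paper: exhibit $\sigma_s$ explicitly as a \v{C}ech coboundary for the cover $\{U,V\}$ by writing down suitable holomorphic vector fields on each chart. Your verification with $\mathbb{J}$ is accurate, and the observation that $y=0$ must be handled separately via the first component (using the entry $2zu_1+u_2^y$ of $\mathbb{J}$ instead of $yzu_2^{y-1}$) matches the paper's Case~3.

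Your ansatz is in fact tidier than the paper's. The paper splits the range $s\geq y-1$ (for $y\geq 1$) into two cases: for $s=y-1$ it uses the same third-component correction $(0,0,-\tfrac{1}{y})^T$ that you do, but for $s\geq y$ it instead adds the more elaborate vector $\bigl(-zu_2^{\,s-y},\,0,\,-\tfrac{2}{y}u_2^{\,s-y+1}\bigr)^T$, mixing first and third components. Your single formula $(0,0,-\tfrac{1}{y}u_2^{\,s-y+1})^T$ covers both cases uniformly, since the $(2,3)$ entry $yzu_2^{y-1}$ of $\mathbb{J}$ already produces exactly $-zu_2^s$ regardless of whether $s=y-1$ or $s>y-1$. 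So the two proofs are equivalent in spirit, but yours avoids an unnecessary case split.
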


\begin{proof}
We  divide the proof into three cases.

\textsc{Case 1:}   
Assume $s \geq y \geq 1$. Then $\left[\begin{matrix} -zu_2^{s-y} & 0 & -\frac{2}{y}u_2^{s-y+1} \end{matrix}\right]^T$ is
 holomorphic in $U$ coordinates, so $\sigma_s$ is cohomologous to $\left[\begin{matrix} - zu_2^{s-y} & z^{-1}u_2^s & -\frac{2}{y}u_2^{s-y+1} \end{matrix}\right]^T$, which we now show is null-cohomologous. In fact, changing coordinates we obtain
\begin{align*}
{\mathbb J} \left[\begin{matrix}
 -zu_2^{s-y} \\
 z^{-1}u_2^s \\
 -\frac{2}{y}u_2^{s-y+1} \\
\end{matrix}\right]
& = \left[\begin{matrix}
-z^{-2} & 0 & 0 \\
2zu_1 + u_2^y & z^2 & yzu_2^{y-1} \\
0 & 0 & 1
\end{matrix}\right]
\left[\begin{matrix}
 -zu_2^{s-y} \\
 z^{-1}u_2^s \\
 -\frac{2}{y}u_2^{s-y+1} \\
\end{matrix}\right] \\
& = \left[\begin{matrix}
 z^{-1}u_2^{s-y}\\
 (-2z^2u_1u_2^{s-y} - zu_2^{s}) + zu_2^s - 2zu_2^s \\
 -\frac{2}{y}u_2^{s-y+1} \\
\end{matrix}\right] \\
& = \left[\begin{matrix}
 z^{-1}u_2^{s-y}\\
 -2u_2^{s-y}(z^2u_1+zu_2^y) \\
 -\frac{2}{y}u_2^{s-y+1} \\
\end{matrix}\right] \\
& = \left[\begin{matrix}
 \xi v_2^{s-y}\\
 -2v_1v_2^{s-y} \\
 -\frac{2}{y}v_2^{s-y+1} \\
\end{matrix}\right] \\
& \sim 0,
\end{align*}
since the monomials in the last vector are all holomorphic on $V$.

\textsc{Case 2:} Assume $s=y-1 \geq 0$. Note that
$\left[\begin{matrix} 0 & 0 & -\frac{1}{y} \end{matrix}\right]^T$ is holomorphic on $U$ coordinates. 
Then $\sigma_s$ is cohomologous to $\left[\begin{matrix} 0 & z^{-1}u_2^s& -\frac{1}{y} \end{matrix}\right]^T$, which 
we now prove is null-cohomologous. In fact, changing coordinates we have
\begin{align*}
\mathbb{J} \left[\begin{matrix}
 0 \\
 z^{-1}u_2^s \\
 -\frac{1}{y} \\
\end{matrix}\right] 
& = \left[\begin{matrix}
-z^{-2} & 0 & 0 \\
2zu_1 + u_2^y & z^2 & yzu_2^{y-1} \\
0 & 0 & 1
\end{matrix}\right]
\left[\begin{matrix}
 0 \\
 z^{-1}u_2^s \\
 -\frac{1}{y} \\
\end{matrix}\right] \\
 & = \left[\begin{matrix}
 0 \\
 0 \\
 -\frac{1}{y}u_2 
\end{matrix}\right] \\
& =  \left[\begin{matrix}
 0 \\
 0 \\
 -\frac{1}{y}v_2 
\end{matrix}\right] \\
& \sim 0,
\end{align*}
since the  the last vector is holomorphic on $V$.

\textsc{Case 3:} Assume $y=0$. Note that $[\begin{matrix}  zu_2^s & 0 & 0 \end{matrix}]^T$
 is holomorphic on $U$ coordinates.

In this case $\sigma_s$ is cohomologous to $[\begin{matrix}  zu_2^s & z^{-1}u_2^s & 0 \end{matrix}]^T$. 
Then changing coordinates:
\begin{align*}
\mathbb{J} \left[\begin{matrix}
 zu_2^s \\
 z^{-1}u_2^s \\
 0 \\
\end{matrix}\right] 
& = \left[\begin{matrix}
-z^{-2} & 0 & 0 \\
2zu_1 + u_2^y & z^2 & yzu_2^{y-1} \\
0 & 0 & 1
\end{matrix}\right]
\left[\begin{matrix}
 zu_2^s \\
 z^{-1}u_2^s \\
 0 \\
\end{matrix}\right] \\
& = \left[\begin{matrix}
 -z^{-1}u_2^s \\
 2u_2^s(z^2u_1+1) \\
 0 \\
\end{matrix}\right] \\
& = \left[\begin{matrix}
 - \xi v_2^s \\
 2 v_1 v_2^s \\
 0 \\
\end{matrix}\right] \\
& \sim 0,
\end{align*}
since  the last vector is holomorphic on $V$.

\end{proof}

\begin{corollary} For every $s \geq 0$ we have the following bounds for the dimensions of the cohomology groups of  deformations with coefficients on the tangent bundle:
\begin{itemize}
\item $h^1(W_2(0),TW_2(0)) = h^1(W_2(1),TW_2(1)) = 0$.
\item $h^1(W_2(y),TW_2(y)) \leq y-1$ for $y \geq 2$.
\end{itemize}

\end{corollary}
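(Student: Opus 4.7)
The plan is to read this off directly from the two preceding lemmas, which together describe exactly which of the generators $\sigma_s$ survive in cohomology for each value of $y$.

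First I would recall that by the first lemma, the cohomology group $\HH^1(\mathcal{W}_2(y), T\mathcal{W}_2(y))$ is spanned as a $\mathbb{C}$-vector space by the classes $\sigma_s = [\,0,\; z^{-1}u_2^s,\; 0\,]^T$ for $s \geq 0$. So the dimension is at most the number of indices $s$ for which $\sigma_s$ is not already null-cohomologous, and an upper bound on $h^1$ is obtained by counting the surviving generators.

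Next I would apply the second lemma, which says $\sigma_s \sim 0$ whenever $s \geq y - 1$. I split into the three cases the statement requires. For $y = 0$, the condition $s \geq y - 1 = -1$ is automatic for every $s \geq 0$, so all generators vanish in cohomology (this is precisely Case 3 of the previous lemma), giving $h^1(\mathcal{W}_2(0), T\mathcal{W}_2(0)) = 0$. For $y = 1$, the condition $s \geq 0$ is again automatic for every generator: the generator $\sigma_0$ is killed by Case 2 (with $s = y - 1 = 0$), and each $\sigma_s$ with $s \geq 1$ is killed by Case 1; hence $h^1(\mathcal{W}_2(1), T\mathcal{W}_2(1)) = 0$. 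For $y \geq 2$, the second lemma forces $\sigma_s \sim 0$ for all $s \geq y - 1$, so the only possibly nontrivial classes are $\sigma_0, \sigma_1, \ldots, \sigma_{y-2}$, yielding at most $y - 1$ independent cohomology classes and thus $h^1(\mathcal{W}_2(y), T\mathcal{W}_2(y)) \leq y - 1$.

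There is no real obstacle here: the corollary is a bookkeeping consequence of the two lemmas, combining the generation statement with the vanishing statement. The only mild subtlety worth pointing out in the write-up is the boundary case $y = 1$, where one has to invoke both Case 1 (for $s \geq 1$) and Case 2 (for $s = 0$) of the vanishing lemma to conclude that every generator is null-cohomologous; all the remaining cases follow uniformly from the inequality $s \geq y - 1$.
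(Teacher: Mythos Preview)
Your proposal is correct and matches the paper's intended approach: the corollary is stated without proof, as an immediate consequence of the two preceding lemmas, and your bookkeeping argument is exactly the deduction they have in mind. Your extra care with the boundary case $y=1$ is a nice touch that the paper leaves implicit.
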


We now show that these bounds are sharp.

\begin{theorem}\label{h1s} For $y \geq 2$ we have
$h^1(W_2(y),TW_2(y)) = y-1$ .
\end{theorem}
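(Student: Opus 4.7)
The two preceding lemmas together show that $\HH^1(\mathcal{W}_2(y), T\mathcal{W}_2(y))$ is spanned by the classes of $\sigma_0, \sigma_1, \ldots, \sigma_{y-2}$, so proving the theorem reduces to showing these $y-1$ classes are linearly independent. My plan is to assume a nontrivial relation $\sum_{s=0}^{y-2} c_s \sigma_s = 0$ in cohomology and then to extract each coefficient $c_m$ as a specific Laurent coefficient forced to vanish by the \v{C}ech coboundary condition.

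Concretely, such a relation would mean there exists a section $\tau = (A_1, A_2, A_3)^T$ holomorphic on $U$ such that $\Omega := \mathbb{J}\bigl(\sum_{s=0}^{y-2} c_s \sigma_s + \tau\bigr)$ is holomorphic on $V$ when read in $V$-coordinates. The second entry of $\Omega$ reads
$$\Omega_2 \;=\; (2zu_1 + u_2^y)\,A_1 + z^2 A_2 + yzu_2^{y-1} A_3 + \sum_{s=0}^{y-2} c_s\, z u_2^s,$$
and this is the component where I expect linear independence to be detected.

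The next step is to substitute the gluing $(z, u_1, u_2) = (\xi^{-1},\, \xi^2 v_1 - \xi v_2^y,\, v_2)$ into $\Omega_2$ and to restrict the result to the slice $\{v_1 = 0\}$; on that slice the monomial $z^a u_1^b u_2^c$ becomes $(-1)^b \xi^{b-a} v_2^{by+c}$. Expanding each of the four contributions to $\Omega_2$ as power series in $\xi, v_2$, I then isolate the coefficient of $\xi^{-1} v_2^m$ for $0 \leq m \leq y-2$. A direct index analysis shows that the $A_1$-contribution requires $b \leq (m-y)/y$, the $A_3$-contribution requires $b \leq (m-y+1)/y$, and the $A_2$-contribution requires $b \geq 1$ together with $b \leq m/y$; each of these ranges is empty when $m \leq y-2$ and $b \geq 0$. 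Hence the only surviving contribution to the coefficient of $\xi^{-1}v_2^m$ is $c_m$ itself, and holomorphicity on $V$ then forces $c_m = 0$ for every $m$ in the allowed range.

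This yields linear independence of $\sigma_0, \ldots, \sigma_{y-2}$ and, together with the preceding two lemmas, gives $h^1(\mathcal{W}_2(y), T\mathcal{W}_2(y)) = y-1$. The main delicate point will be the bookkeeping in the step above: one must rule out a conspiracy between the three $A_i$-contributions that could absorb a nonzero $c_m$, which is exactly why I restrict to the slice $v_1 = 0$ and pinpoint the precise index ranges rather than settling for crude estimates. It is reassuring that the ranges first become nonempty at $m = y-1$, where $b = 0$ enters the $A_3$-contribution, in perfect agreement with the previous lemma killing $\sigma_{y-1}$.
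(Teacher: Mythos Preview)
Your proof is correct and follows essentially the same approach as the paper's: both reduce to showing the classes $\sigma_0,\ldots,\sigma_{y-2}$ are linearly independent by checking that the relevant monomial coefficients cannot arise from any coboundary, focusing on the second component of the cocycle. Your restriction to the slice $v_1=0$ is a clean technical device that makes the index bookkeeping more transparent than the paper's direct expansion of $(zu_1+u_2^y)^i$ in $U$-coordinates, but the underlying idea is the same.
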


\begin{proof} Fix $y \geq 2$. 
First we show that the cocycles $[\sigma_s]= \left[\begin{matrix} 0 & z^{-1}u_2^s & 0 \end{matrix}\right]^T$   are nontrivial for 
$ s=0, \ldots, y-2 $.

Suppose $\sigma_s$ is a coboundary. Then there exist functions $\alpha$ holomorphic on $U$ 
and $\beta$ holomorphic on $V$ such that
\[
\sigma_s = \alpha + T^{-1}\beta.
\]

Thus, omitting indices $lis$ from the coefficients of $\alpha$ and $\beta$ we have an expression of the form:
\begin{align*}
\left[\begin{matrix}
 0 \\
 z^{-1}u_2^s \\
 0 \\
\end{matrix}\right] 
& = \sum_{l,i,s \geq 0} 
 \left[\begin{matrix}
 \alpha^1 \\
\alpha^2 \\
\alpha^3 \\
\end{matrix}\right] 
z^l u_1^i u_2^s
+
\left[\begin{matrix}
-\xi^{-2} & 0 & 0 \\
2\xi v_1 - v_2^y & \xi^2 & -y\xi v_2^{y-1} \\
0 & 0 & 1
\end{matrix}\right]
\left[\begin{matrix}
\beta^1 \\
\beta^2 \\
\beta^3 \\
\end{matrix}\right] 
\xi^l v_1^i v_2^s\\
 & = \sum_{l,i,s \geq 0} 
  \left[\begin{matrix}
\alpha^1 \\
\alpha^2 \\
\alpha^3 \\
\end{matrix}\right] 
z^l u_1^i u_2^s
+
\left[\begin{matrix}
 -\beta^1 \xi^{-2} \\
\beta^1(2\xi v_1 - v_2^y) +\beta^2\xi^2 -\beta^3 j \xi v_2^{y-1} \\
\beta^3 \\
\end{matrix}\right] 
\xi^l v_1^i v_2^s \\
 & = \sum_{l,i,s \geq 0} 
 \left[\begin{matrix}
\alpha^1 \\
\alpha^2 \\
\alpha^3 \\
\end{matrix}\right] 
z^l u_1^i u_2^s
+
\left[\begin{matrix}
 -\beta^1 z^2 \\
\beta^1(2z u_1 + u_2^y) +\beta^2z^{-2} -\beta^3 y z^{-1} u_2^{y-1} \\
\beta^3 \\
\end{matrix}\right] 
z^{-l} (z^2u_1+zu_2^y)^i u_2^s \\
 & = \sum_{l,i,s \geq 0} 
  \left[\begin{matrix}
\alpha^1 \\
\alpha^2 \\
\alpha^3 \\
\end{matrix}\right] 
z^l u_1^i u_2^s 
+
\left[\begin{matrix}
 -\beta^1 z^2 \\
\beta^1(2z u_1 + u_2^y) +\beta^2z^{-2} -\beta^3 y z^{-1} u_2^{y-1} \\
\beta^3 \\
\end{matrix}\right] 
z^{-l+i} (zu_1+u_2^y)^i u_2^s.
\end{align*}
But on the right-hand side of the equation the monomials of the form $z^{-1}u_1^0u_2^s$ appear only for $s \geq y-1$. So it is impossible to solve for $\alpha$ and $\beta$.
Hence, we have shown that each $\sigma_s$ is nonzero in cohomology. It remains to show that they are linearly independent.

Assume otherwise, that there is a linear dependence among the cohomology classes of the $\sigma_s$. Such a relation would then 
be given by a polynomial on these classes whose class is  a coboundary. 
 Let $p$ be any  polynomial on $u_2$  that   has degree at most $y-2$ 
 and let $ \sigma_p = [\begin{matrix} 0 & z^{-1}p(u_2) & 0 \end{matrix}]^T$. We wish to show that $\sigma_p$ is not a coboundary.
 Suppose there exist functions $\alpha$ holomorphic on $U$ and $\beta$ holomorphic on $V$ such that
\[
\sigma_p = \alpha + T^{-1}\beta.
\]
Analogously to the first part of the proof, in coordinates we would have
\begin{align*}
\left[\begin{matrix}
 0 \\
 z^{-1}p(u_2) \\
 0 \\
\end{matrix}\right] 
 & = \sum_{l,i,s \geq 0} 
  \left[\begin{matrix}
\alpha^1 \\
\alpha^2 \\
\alpha^3 \\
\end{matrix}\right] 
z^l u_1^i u_2^s 
+
\left[\begin{matrix}
 -\beta^1 z^2 \\
\beta^1(2z u_1 + u_2^y) +\beta^2z^{-2} -\beta^3 y z^{-1} u_2^{y-1} \\
\beta^3 \\
\end{matrix}\right] 
z^{-l+i} (zu_1+u_2^y)^i u_2^s.
\end{align*}
But on the right-hand side of the equation the monomials of the form $z^{-1}u_1^0u_2^s$ appear only for $s \geq y-1$. So it is impossible to solve for $\alpha$ and $\beta$. It follows that the sections $\sigma_0, \ldots, \sigma_{y-2}$ are pairwise non cohomologous.
\end{proof}

We have thus proved the following result:

\begin{theorem}\label{W2def}
Let $y_1, y_2 \geq 2$. Then $\mathcal{W}_2(y_1)$ is isomorphic to $\mathcal{W}_2(y_2)$ if and only if $y_1 = y_2$. Hence the family \eqref{W2family} contains infinitely many distinct isomorphism classes of complex manifolds.
\end{theorem}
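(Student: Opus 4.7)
The plan is to use $h^1(X,TX)$ as a biholomorphic invariant and to read off the value of $y$ directly from this dimension, which by Theorem \ref{h1s} equals $y-1$. Thus Theorem \ref{h1s} already carries all of the real content; what remains is a short deduction.

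First I would dispose of the easy direction: if $y_1=y_2$, then $\mathcal{W}_2(y_1)$ and $\mathcal{W}_2(y_2)$ are defined by literally the same gluing data on $U \sqcup V$, so they are the same complex manifold, in particular biholomorphic.

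For the nontrivial direction, suppose $\phi\colon \mathcal{W}_2(y_1)\to\mathcal{W}_2(y_2)$ is a biholomorphism. Its differential gives a sheaf isomorphism $\phi^{*}T\mathcal{W}_2(y_2)\cong T\mathcal{W}_2(y_1)$, which induces an isomorphism on \v{C}ech cohomology
\[
H^1\bigl(\mathcal{W}_2(y_1),T\mathcal{W}_2(y_1)\bigr)\;\cong\;H^1\bigl(\mathcal{W}_2(y_2),T\mathcal{W}_2(y_2)\bigr).
\]
Comparing dimensions and plugging in Theorem \ref{h1s} yields $y_1-1=y_2-1$, hence $y_1=y_2$.

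The final claim then follows at once: as $y$ ranges over $\{2,3,4,\dots\}$, the threefolds $\mathcal{W}_2(y)$ — which are concrete members of \eqref{W2family} corresponding to the choice $t_s=\delta_{s,y}$ — are pairwise non-isomorphic by the preceding argument, producing countably many complex isomorphism types inside the family. The only substantive obstacle in this deduction is to know that the tangent cohomology is \emph{finite-dimensional with the sharp value} $y-1$ rather than, say, infinite-dimensional for every $y\geq 2$; without that sharpness the invariance argument would be useless. That obstacle is handled entirely by Theorem \ref{h1s}.
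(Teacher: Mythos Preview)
Your proposal is correct and follows exactly the paper's approach: the paper presents Theorem~\ref{W2def} with the phrase ``We have thus proved the following result,'' indicating that it is an immediate consequence of the dimension count $h^1(\mathcal{W}_2(y),T\mathcal{W}_2(y))=y-1$ in Theorem~\ref{h1s}, which is precisely the biholomorphic invariant you invoke. Your write-up simply makes explicit the (standard) step that $h^1(X,TX)$ is preserved under biholomorphism.
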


Observe  that Theorem \ref{h1s} also implies:

\begin{corollary}\label{notaffine} For $y \geq 2$ the threefolds
$W_2(y)$ are not affine.
\end{corollary}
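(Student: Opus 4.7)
The plan is to derive the non-affineness from the dimension computation already in hand. On an affine (or more generally Stein) complex manifold, Cartan's Theorem B (resp.\ Serre's vanishing criterion in the algebraic setting) forces $\HH^1$ of every coherent analytic sheaf, and in particular of the locally free sheaf $T\mathcal{W}_2(y)$, to vanish. So it suffices to exhibit a nonzero class in $\HH^1(\mathcal{W}_2(y), T\mathcal{W}_2(y))$.

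First, I would invoke Theorem~\ref{h1s}, which gives $h^1(\mathcal{W}_2(y), T\mathcal{W}_2(y)) = y-1$. For $y \geq 2$ this is strictly positive, so there is a nontrivial cohomology class (concretely, any $\sigma_s$ with $0 \leq s \leq y-2$). Next I would observe that if $\mathcal{W}_2(y)$ were affine, then by Serre's criterion, every coherent sheaf on it would have vanishing higher cohomology; applied to the tangent sheaf this would give $\HH^1(\mathcal{W}_2(y), T\mathcal{W}_2(y)) = 0$, contradicting the dimension count. The same argument works in the analytic category via Cartan's Theorem B, since an affine variety is in particular Stein.

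There is essentially no obstacle: the corollary is a one-line consequence of Theorem~\ref{h1s} once one remembers the vanishing theorem for coherent cohomology on affine schemes. The only thing worth stating carefully is the choice of framework (algebraic vs.\ analytic), since the threefolds $\mathcal{W}_2(y)$ are presented analytically by a gluing, so invoking Cartan B on the Stein-candidate $\mathcal{W}_2(y)$ is the cleanest formulation; affineness in the algebraic sense implies Steinness in the analytic sense, so the contrapositive argument still rules out algebraic affineness as well.
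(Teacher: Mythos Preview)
Your argument is correct and is exactly the approach the paper takes: the authors simply state that the corollary follows from Theorem~\ref{h1s}, leaving implicit the standard observation that an affine (or Stein) manifold has vanishing higher cohomology for coherent sheaves, which is precisely what you have spelled out.
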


\subsection{Deformations of $W_3$  as affine bundles}

We use  $\HH^1(W_3,TW_3)$  to parametrise formal infinitesimal deformations of $W_3$.
In this section we will regard these deformations as affine line bundles on the surface $Z_{-1}$,
as defined in \ref{afbundle}. Even though in a certain sense the results of this section 
on affine bundles are somewhat weaker then those of section \ref{general} 
which study their total spaces, the former do not follow from the latter, 
and they are of  independent interest, so we have decided to present 
both points of view. The reader interested only in deformations of manifolds may skip this subsection.
Recall from definition \eqref{canonical} that
 $W_3$ can be covered by $U=\{(z,u_1,u_2)\}$ and $V=\{(\xi, v_1, v_2)\}$, 
with $U \cap V = \mathbb C-\{0\} \times \mathbb C^2$ and transition function given by:
\begin{equation}\label{W3}
\boxed{(\xi, v_1, v_2) = (z^{-1}, z^3u_1, z^{-1}u_2)}
\end{equation}
An infinite dimensional family parametrising deformations  of $W_3$ is given by:
 
\begin{lemma}\cite[Thm.\thinspace 20]{GKMR}\label{par3}
There is a semiuniversal 
 deformation space $\mathcal{W}$  for $W_3$  parametrised by cocycles of the form
$$
\left[
\begin{array}{cccr}
a_{lis} \\
b_{lis} \\
c_{lis}
\end{array}
\right] z^l u_1^i u_2^s
\qquad  3i-3-l-s<0.
$$
\end{lemma}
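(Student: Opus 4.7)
The plan is to compute $\HH^1(W_3, TW_3)$ via \v{C}ech cohomology with the two-chart cover $\{U, V\}$ from Definition \ref{WKdef}, in direct analogy with the $\mathcal{W}_2(y)$ calculation carried out in the preceding subsection, and then to exhibit the resulting infinite-dimensional family as the semiuniversal deformation space $\mathcal{W}$.

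Concretely, I would first differentiate the canonical transition (\ref{W3}) to obtain the transition matrix for $TW_3$ in the $U$-frame,
\[
\mathbb{J} = \begin{pmatrix} -z^{-2} & 0 & 0 \\ 3z^2 u_1 & z^3 & 0 \\ -z^{-2} u_2 & 0 & z^{-1} \end{pmatrix}.
\]
A generic \v{C}ech $1$-cocycle over $U \cap V$ has the form $\sigma = \sum (a_{lis}, b_{lis}, c_{lis})^T z^l u_1^i u_2^s$ with $l \in \mathbb{Z}$ and $i, s \geq 0$; subtracting $U$-coboundaries restricts us to $l \leq -1$. To mod out by sections holomorphic on $V$, I would compute $\mathbb{J}\sigma$ and substitute $\xi = z^{-1}$, $v_1 = z^{-3}u_1$, $v_2 = z u_2$ from (\ref{W3}), under which a monomial $z^{l'} u_1^{i'} u_2^{s'}$ becomes $\xi^{-l'+3i'-s'} v_1^{i'} v_2^{s'}$. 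A term-by-term bookkeeping of the five monomials appearing in $\mathbb{J}\sigma$ shows that the $b$-entry (shifted by the $z^3$ in the $(2,2)$-slot of $\mathbb{J}$) produces a $\xi$-exponent of $3i-3-l-s$, while the contributions coming from the $a$- and $c$-entries produce strictly larger exponents. The weakest condition for non-holomorphy on $V$ is therefore $3i-3-l-s<0$, and this single inequality captures all indices $(l,i,s)$ for which some choice of $(a_{lis}, b_{lis}, c_{lis})$ represents a non-trivial cohomology class.

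The main obstacle I expect is the passage from formal infinitesimal classes in $\HH^1(W_3, TW_3)$ to an honest semiuniversal family in the noncompact setting, where Kuranishi-type theorems are not available off the shelf. To circumvent this, I would exhibit explicitly, in modified canonical coordinates on $U$ and $V$, the family of complex structures obtained by inserting each cocycle $(a_{lis}, b_{lis}, c_{lis})^T z^l u_1^i u_2^s$ additively into the $v_1, v_2$ components of the transition (\ref{W3}), check directly that the resulting transition functions define a complex manifold over a parameter space fitting Definition \ref{def}, and observe that the Kodaira--Spencer map at the base point is by construction the identity on $\HH^1(W_3, TW_3)$. This yields semiuniversality and completes the identification of $\mathcal{W}$ with the space of cocycles of the stated form.
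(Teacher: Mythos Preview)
The paper does not prove this lemma; it is stated with a citation to \cite[Thm.\thinspace 20]{GKMR} and no argument is given in the text. Your \v{C}ech-cohomology outline is exactly the method the paper uses elsewhere (for $\mathcal{W}_2(y)$ and in Lemma~\ref{cocyclesw3}), and your bookkeeping of $\xi$-exponents is correct: the $b$-term of $\mathbb{J}\sigma$ has $\xi$-exponent $3i-3-l-s$, while the $a$- and $c$-terms yield $3i+2-l-s$ and $3i+1-l-s$, so $3i-3-l-s<0$ is indeed the inequality that picks out the surviving monomials. (A slip: you wrote $v_1=z^{-3}u_1$ where the transition is $v_1=z^{3}u_1$; the monomial conversion you then state is nonetheless the right one, presumably you meant $u_1=z^{-3}v_1$.)

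Where your proposal overreaches is the passage to a genuine family. You propose to build $\mathcal{W}$ by ``inserting each cocycle additively'' into the transition and checking that the result is a complex manifold with Kodaira--Spencer map the identity. But the paper immediately after Lemma~\ref{par3} shows this is exactly what fails in general: Lemma~\ref{invertible1} and the lemma following it prove that a cocycle $(0,z^lu_1^iu_2^s,0)^T$ gives an \emph{invertible} transition only when $i=0$, and $(0,0,z^lu_1^iu_2^s)^T$ only when $s=0$. So the naive additive insertion does not produce a family of complex manifolds over all of $\HH^1(W_3,TW_3)$, and your semiuniversality argument as written would not go through. The lemma should be read as identifying the formal tangent space $\HH^1(W_3,TW_3)$---the space of infinitesimal deformations---with the subsequent lemmas sorting out which directions actually integrate.
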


But, it is a priori still possible that the family produces only finitely many isomorphism types.
We first show that the family given in lemma \ref{par3} does indeed produce infinitely many 
integrable directions. 

\begin{lemma} \label{invertible1}
A cocycle of the form 
\[
\left[\begin{matrix}
0 \\
z^lu_1^iu_2^s \\
0
\end{matrix} \right]
\]
defines a deformation of $W_3$ if and only if $i=0$.
\end{lemma}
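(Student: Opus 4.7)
The plan is to recast the cocycle $\sigma = [0, z^l u_1^i u_2^s, 0]^T$ as data for a candidate perturbed gluing
\[
\tilde\phi(z,u_1,u_2) \;=\; \bigl(z^{-1},\; z^3 u_1 + z^l u_1^i u_2^s,\; z^{-1} u_2\bigr)
\]
on the overlap $U \cap V = (\mathbb{C}\setminus\{0\})\times\mathbb{C}^2$, and then decide for which $(l,i,s)$ the map $\tilde\phi$ is a biholomorphism; this is precisely the condition for $\sigma$ to integrate to a genuine complex manifold structure, in the same spirit as the explicit gluings used earlier for $\mathcal{W}_2(y)$.

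For the easy direction, $i = 0$: the first and third components force $z = \xi^{-1}$ and $u_2 = \xi^{-1} v_2$, and substituting these into the second component yields the explicit inverse $u_1 = \xi^{3} v_1 - \xi^{3-l-s} v_2^{s}$, which is visibly holomorphic on $\{\xi \neq 0\}\times\mathbb{C}^2$. Hence $\tilde\phi$ is a biholomorphism of the overlap, and the cocycle integrates to a bona fide deformation.

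For the converse, $i \geq 1$: I would show that $\tilde\phi$ cannot be injective. Factoring, $v_1 = u_1\bigl(z^3 + z^l u_1^{i-1} u_2^s\bigr)$, so both the zero section $\{u_1=0\}$ and the hypersurface $H = \{z^3 + z^l u_1^{i-1} u_2^s = 0\}$ map into the slice $\{v_1 = 0\}$. Producing a point of $H$ with $u_1 \neq 0$ then supplies two distinct points of $U\cap V$ sharing the same image as $(z, 0, u_2)$. For $i \geq 2$, or for $i=1$ with $s \geq 1$, the equation $u_1^{i-1} u_2^s = -z^{3-l}$ has solutions for each $z\neq 0$, so such collisions exist in abundance.

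The main hurdle I expect is the marginal case $i = 1$, $s = 0$, where the factor reduces to $z^3 + z^l$ and is independent of $u_1, u_2$: here $H$ is empty exactly when $l = 3$, and in that instance the perturbed gluing collapses to $v_1 = 2 z^3 u_1$, which is a rescaling and hence honestly biholomorphic. I would dispose of this apparent counterexample by verifying that $(l,s,i) = (3,0,1)$ yields a coboundary in $H^1(W_3, TW_3)$ (witnessed by the $0$-cochain $\tau_U = -[0, z^3 u_1, 0]^T$, $\tau_V = 0$, using that $v_1 \,\partial/\partial v_1$ extends holomorphically across $V$); and for any other $l \neq 3$ with $s=0$, $i = 1$, the equation $z^{l-3} = -1$ has solutions in $\mathbb{C}^\ast$, restoring non-injectivity. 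Thus across the range $3i - 3 - l - s < 0$ of Lemma \ref{par3} that genuinely parametrises $H^1$, only the $i = 0$ monomials yield admissible deformations.
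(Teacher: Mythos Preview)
Your approach matches the paper's: write the perturbed gluing and test whether it is a biholomorphism of the overlap. One correction to your setup: following the paper's convention (add the cocycle to $(z,u_1,u_2)$ and then apply the transition), the second component is $z^3 u_1 + z^{\,l+3} u_1^i u_2^s$, not $z^3 u_1 + z^{\,l} u_1^i u_2^s$; this merely shifts $l$ by $3$ and changes nothing structural in your analysis.

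Your converse is actually more careful than the paper's, which simply asserts that the equation for $u_1$ ``does not admit a unique solution'' when $i\neq 0$. You correctly isolate the borderline case $i=1$, $s=0$, where the equation is linear in $u_1$ and the coefficient could in principle be nowhere zero; in the paper's normalisation that happens only for $l=0$, which is excluded by the inequality $3i-3-l-s<0$ of Lemma~\ref{par3} (equivalently, as you note, $u_1\,\partial/\partial u_1$ extends to a global vector field, so that cocycle is a coboundary). So your argument follows the same line as the paper while filling a small gap the paper leaves implicit.
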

\begin{proof}
The deformation is given by
\begin{align*}
\left[ \begin{matrix}
\xi \\ v_1 \\ v_2
\end{matrix} \right]
& =
\left[ \begin{matrix}
z^{-2} & 0 & 0\\
0 & z^3 & 0 \\
0 & 0 & z^{-1}
\end{matrix} \right]
\left(
\left[ \begin{matrix}
z \\ u_1 \\ u_2
\end{matrix} \right]
+
\left[ \begin{matrix}
0 \\ z^l u_1^i u_2^s \\ 0
\end{matrix} \right]
\right) \\
& = 
\left[ \begin{matrix}
z^{-1} \\ z^3u_1 + z^{l+3} u_1^i u_2^s \\ z^{-1}u_2
\end{matrix} \right].
\end{align*}
This rule defines a change of coordinates (an invertible function) if and only if $i=0$.

Indeed, we have $z = \xi^{-1}$ and $u_2 = \xi^{-1} v_2$.

Then
\[
v_1 = \xi^{-3}u_1 + \xi^{-l-s-3}u_1^iv_2^s,
\]
which does not admit a unique solution for $u_1$ if $i \neq 0$.
\end{proof}

\begin{lemma}
A cocycle of the form 
\[
\left[\begin{matrix}
0 \\
0 \\
z^lu_1^iu_2^s
\end{matrix} \right]
\]
defines a deformation of $W_3$ if and only if  $s=0$.
\end{lemma}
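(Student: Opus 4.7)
The plan is to replicate the strategy used in Lemma \ref{invertible1}, adapted to the third coordinate. First I would form the candidate rule by adding the cocycle to $(z,u_1,u_2)^T$ and applying the diagonal transition matrix from \eqref{W3}, obtaining
\[
(\xi, v_1, v_2) = \bigl(z^{-1},\ z^3 u_1,\ z^{-1} u_2 + z^{l-1} u_1^i u_2^s\bigr).
\]
The task is then to determine when this rule is a genuine biholomorphism on the overlap $U \cap V$.

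Since the first two components are unchanged from \eqref{W3}, I would read off $z = \xi^{-1}$ and $u_1 = \xi^{-3} v_1$ directly. Substituting these into the third component reduces the question to whether the single equation
\[
v_2 = \xi u_2 + \xi^{1-l-3i} v_1^i u_2^s
\]
admits a unique holomorphic solution for $u_2$ on the overlap.

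The key (and only) point, exactly parallel to Lemma \ref{invertible1}, is that the equation above is polynomial of degree $\max(1,s)$ in $u_2$, and so admits such a unique solution precisely when $s = 0$. For $s=0$ one recovers $u_2 = \xi^{-1}\bigl(v_2 - \xi^{1-l-3i} v_1^i\bigr)$, which is holomorphic on $U\cap V$ since $\xi$ is nonvanishing there. For $s \geq 1$ the inversion fails in the same manner as in the previous lemma, either because the linear coefficient $\xi + \xi^{1-l-3i} v_1^i$ vanishes on a codimension one locus (the case $s=1$) or because the polynomial acquires multiple roots (the case $s \geq 2$), so the cocycle does not define a change of coordinates. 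I do not anticipate any real obstacle; the whole argument is dictated by the form of the lemma it imitates.
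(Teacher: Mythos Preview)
Your approach is exactly the one the paper takes; the paper's own proof simply writes out the deformed transition rule and then says ``analogous to the proof of Lemma~\ref{invertible1}'', which is precisely what you carry out. One computational slip: from $v_1 = z^3 u_1$ and $z=\xi^{-1}$ you should get $u_1 = \xi^{3} v_1$, not $\xi^{-3} v_1$, so the exponent in the third equation is $1-l+3i$ rather than $1-l-3i$; this does not affect the rest of the argument, since the invertibility analysis depends only on the power of $u_2$.
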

\begin{proof}
The deformation is given by
\begin{align*}
\left[ \begin{matrix}
\xi \\ v_1 \\ v_2
\end{matrix} \right]
& =
\left[ \begin{matrix}
z^{-2} & 0 & 0\\
0 & z^3 & 0 \\
0 & 0 & z^{-1}
\end{matrix} \right]
\left(
\left[ \begin{matrix}
z \\ u_1 \\ u_2
\end{matrix} \right]
+
\left[ \begin{matrix}
0 \\ 0 \\ z^l u_1^i u_2^s
\end{matrix} \right]
\right) \\
& = 
\left[ \begin{matrix}
z^{-1} \\ z^3u_1 \\ z^{-1}u_2 + z^{l-1}u_1^iu_2^s
\end{matrix} \right].
\end{align*}
This rule defines a change of coordinates  (an invertible  function) if and only if $s=0$. The proof is analogous to the proof of Lemma \ref{invertible1}.
\end{proof}

\begin{lemma}\label{cocyclesw3}
The cocycles 
\[
\left[
\begin{matrix}
0 \\ z^lu_2^s \\ 0
\end{matrix}
\right]
\]
are nonzero in $\HH^1(W_3,TW_3)$ for $l=-1, -2$ and $s \geq 0$, and pairwise distinct. 
\end{lemma}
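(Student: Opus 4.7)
The plan is to adapt the template from the proof of Theorem~\ref{h1s} to $W_3$. First I would compute the Jacobian transition matrix of $TW_3$ from \eqref{W3}:
\[
\mathbb{J} \;=\; \begin{pmatrix} -z^{-2} & 0 & 0 \\ 3z^{2}u_{1} & z^{3} & 0 \\ -z^{-2}u_{2} & 0 & z^{-1} \end{pmatrix},
\]
and record its inverse $T^{-1}$, expressed in $V$-coordinates, as the Jacobian of the inverse change of variables $z=\xi^{-1}$, $u_{1}=\xi^{3}v_{1}$, $u_{2}=\xi^{-1}v_{2}$. The key substitution throughout is $\xi^{m}v_{1}^{j}v_{2}^{t} = z^{-m+3j-t}u_{1}^{j}u_{2}^{t}$.

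Next, to show nontriviality of each $\sigma_{l,s} = [\,0,\ z^{l}u_{2}^{s},\ 0\,]^{T}$ with $l\in\{-1,-2\}$ and $s\geq 0$, I would suppose by contradiction that $\sigma_{l,s}=\alpha + T^{-1}\beta$ with $\alpha$ holomorphic on $U$ and $\beta$ holomorphic on $V$, and read off only the second component. Expanding $\alpha,\beta$ as power series with nonnegative indices and applying $T^{-1}$ (whose second row is $(3\xi^{2}v_{1},\ \xi^{3},\ 0)$), the second slot of $\alpha+T^{-1}\beta$ becomes a sum of monomials of three types:
\[
\alpha^{2}_{m'j's'}\,z^{m'}u_{1}^{j'}u_{2}^{s'}, \qquad 3\beta^{1}_{mjt}\,z^{1+3j-m-t}u_{1}^{j+1}u_{2}^{t}, \qquad \beta^{2}_{mjt}\,z^{-3-m+3j-t}u_{1}^{j}u_{2}^{t},
\]
with all indices nonnegative. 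Comparing coefficients of $z^{l}u_{1}^{0}u_{2}^{s}$: the left side contributes $1$; on the right, no $\alpha^{2}$-term contributes (since $l<0$); no $\beta^{1}$-term contributes (since each carries $u_{1}^{j+1}$ with $j+1\geq 1$); and a $\beta^{2}$-contribution would force $j=0$, $t=s$, and $m=-3-s-l$, which is negative for both $l=-1$ and $l=-2$ when $s\geq 0$. This contradicts $m\geq 0$, so $\sigma_{l,s}$ is not a coboundary.

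For pairwise distinctness I would in fact prove the stronger statement that the family $\{\sigma_{l,s}\}_{l\in\{-1,-2\},\,s\geq 0}$ is linearly independent in $\HH^{1}(W_{3},TW_{3})$. If $\sum c_{l,s}\sigma_{l,s}$ were null-cohomologous with some $c_{l,s}\neq 0$, then for that particular pair $(l,s)$ the monomial $z^{l}u_{1}^{0}u_{2}^{s}$ has coefficient $c_{l,s}$ on the left, while by the exact same exponent count above the right-hand coboundary side contributes $0$; hence $c_{l,s}=0$, a contradiction.

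The main obstacle is purely bookkeeping: one must carefully track which monomials can appear in each of the three contribution types after the substitution $\xi^{m}v_{1}^{j}v_{2}^{t}=z^{-m+3j-t}u_{1}^{j}u_{2}^{t}$, and verify that the arithmetic excludes coboundaries exactly for the specified range $l\in\{-1,-2\}$ (note that $l=-3$, $s=0$ would permit $m=0$ and hence fall outside the method, consistent with the restriction in Lemma~\ref{par3}). Once the combinatorial check is set up as above, both conclusions are immediate.
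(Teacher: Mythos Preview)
Your proof is correct and follows essentially the same approach as the paper: both write out the general $1$-coboundary $\alpha + T^{-1}\beta$, convert the $\beta$-terms to $U$-coordinates via $\xi^{m}v_{1}^{j}v_{2}^{t}=z^{-m+3j-t}u_{1}^{j}u_{2}^{t}$, and observe that no monomial $z^{-1}u_{2}^{s}$ or $z^{-2}u_{2}^{s}$ can occur in the second entry. Your version is more explicit in the bookkeeping and upgrades ``pairwise distinct'' to linear independence, but this is the same argument.
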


\begin{proof}
A general 1-coboundary $\tau$ is given by
\begin{align*}
\tau & = \sum_{l \geq 0} \sum_{i \geq 0} \sum_{s \geq 0} 
\left[ \begin{matrix}
\alpha^1 \\ \alpha^2 \\ \alpha^3
\end{matrix} \right]
z^l u_1^i u_2^s
+
\left[ \begin{matrix}
-\xi^{-2} & 0 & 0 \\
3\xi^2v_1 & \xi^3 & 0 \\
-\xi^{-2}v_2 & 0 & \xi{-1}
\end{matrix} \right]
\left[
\begin{matrix}
\beta^1 \\ \beta^2 \\ \beta^3
\end{matrix}
\right]
\xi^l v_1^i v_2^s \\
 & = \sum_{l \geq 0} \sum_{i \geq 0} \sum_{s \geq 0} \left[ \begin{matrix}
\alpha^1 \\ \alpha^2 \\ \alpha^3
\end{matrix} \right]
z^l u_1^i u_2^s
+
\left[
\begin{matrix}
-\beta^1 z^2 \\ 3\beta^1 z^3u_1 +  \beta^2 z^{-3} \\ -\beta^1 z u_2 - \beta^3 z
\end{matrix}
\right]
z^{-l+3i-s} u_1^i u_2^s.
\end{align*}
So we see that on the second entries of the matrices the monomials $z^{-1}u_2^s$ and $z^{-2}u_2^s$ do not appear.
\end{proof}

\begin{proposition} \label{defW3}
The following infinite-dimensional family of deformations of $W_3$ is obtained by integrating cocycles in  $\mathcal W$:
$$ 
(\xi, v_1,v_2)=
\left(
z^{-1},
z^3 u_1 + \sum_{s \geq 0} \left(t_s z^2+  t'_s z\right) u_2^s ,
z^{-1}u_2 \right) .
$$
\end{proposition}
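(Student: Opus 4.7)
The plan is to exhibit the family explicitly as a gluing of two charts and to verify two separate things: (i) the prescribed transition rule is a biholomorphism on the overlap for every choice of parameters $t_s, t'_s$, so the resulting object is an honest complex manifold; and (ii) the linearisation of this family at the origin of parameter space realises exactly the cocycle classes in $\mathcal{W}$ identified by Lemmas \ref{invertible1} and \ref{cocyclesw3}.

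For (i), I write
$$\Phi(z, u_1, u_2) = \left(z^{-1},\; z^3 u_1 + \sum_{s \geq 0}(t_s z^2 + t'_s z) u_2^s,\; z^{-1} u_2\right)$$
and check invertibility on $\{z \neq 0\} \times \mathbb{C}^2$. Since $\xi = z^{-1}$ and $v_2 = z^{-1} u_2$ recover $z$ and $u_2$ directly, invertibility reduces to solving a single equation for $u_1$; this equation is linear in $u_1$ because the perturbation does not involve $u_1$ (that is, $i=0$ in the notation of Lemma \ref{invertible1}), yielding the unique solution
$$u_1 \;=\; \xi^3 v_1 \;-\; \sum_{s \geq 0}(t_s \xi + t'_s \xi^2)\,\xi^{-s} v_2^s.$$
This is exactly the $i=0$ case of Lemma \ref{invertible1} applied summand by summand, so the gluing gives a bona fide complex manifold (and no triple-overlap condition arises, since we have only two charts).

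For (ii), I differentiate $\Phi$ with respect to $t_s$ and $t'_s$ at $t = t' = 0$ to obtain the infinitesimal directions $z^2 u_2^s \,\partial/\partial v_1$ and $z u_2^s \,\partial/\partial v_1$ on $U \cap V$, written in the $V$-frame. Applying the canonical transition rule $\partial/\partial v_1 = z^{-3}\,\partial/\partial u_1$ coming from \eqref{W3}, these become $z^{-1} u_2^s\,\partial/\partial u_1$ and $z^{-2} u_2^s\,\partial/\partial u_1$ in the $U$-frame, i.e.\ the cocycles $[0,\;z^{-1} u_2^s,\;0]^T$ and $[0,\;z^{-2} u_2^s,\;0]^T$ singled out by Lemma \ref{cocyclesw3}. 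For $i=0$ and $l \in \{-1,-2\}$ the degree bound $3i-3-l-s<0$ of Lemma \ref{par3} reduces to $s> -3-l$, which holds for every $s \geq 0$, so these classes indeed lie in $\mathcal{W}$.

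The main obstacle is purely notational: one must track carefully the change of frame $\partial/\partial v_1 \leftrightarrow z^{-3}\,\partial/\partial u_1$ so that the linearisation of $\Phi$ is matched with the cocycle representatives used in Lemmas \ref{invertible1}--\ref{cocyclesw3}; once this bookkeeping is fixed, the rest is the elementary invertibility check above.
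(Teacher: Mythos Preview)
Your argument is correct and follows essentially the same route as the paper: the paper's proof simply names the cocycles $[0,\,z^{-2}u_2^s,\,0]^T$ and $[0,\,z^{-1}u_2^s,\,0]^T$ and relies on Lemmas \ref{invertible1} and \ref{cocyclesw3} for the rest, whereas you make the invertibility check and the linearisation explicit. Your version is more detailed but not genuinely different in strategy.
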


\begin{proof}
This family is obtained from the cocycles of the form

\[
\left[
\begin{matrix}
0 \\ z^{-2}u_2^s \\ 0
\end{matrix}
\right]
\textnormal{ and }
\left[
\begin{matrix}
0 \\ z^{-1}u_2^s \\ 0
\end{matrix}
\right].
\]

\end{proof}

We now present a  result about the deformations of $W_3$ given in Proposition  \ref{defW3}
regarded as affine rank 1 bundles over a surface. 
Recall that for each integer  $k$ the surface $Z_k$ can be described in charts by gluing two copies of 
$\mathbb{C}^2$ with coordinates $(z,u)$ and $(\xi, v)$ and
with change of coordinates on $\mathbb C^*\times \mathbb C$ given by
$
(\xi, v) = (z^{-1}, z^k u).
$
In particular the surface $ Z_{(-1)}$ can be  described by gluing $
(\xi, v) = (z^{-1}, z^{-1} u).
$
We now consider deformations  $\mathcal{W}_3(j)$ which may also be regarded as the total space of an  affine line bundle over 
the surface $ Z_{(-1)}$.

\begin{notation}\label{afbundle}
 Fix $j$ a positive integer. Then by $\mathcal{W}_3(j)$ we mean the deformation of $W_3$ given by the transition
function
\[
(\xi, v_1,v_2)=
\left(
z^{-1},
z^3 u_1 + z^2u_2^j ,
z^{-1}u_2 \right).
\]
The threefold $\mathcal{W}_3(j)$ can also be regarded as the total space of the {\it rank 1 affine bundle} 
$$E(j) \stackrel{\pi}{\to} \mathbb Z_{-1},$$ where $\pi$ is the projection on the first and third coordinates and the transition function for $E(j)$  is given in canonical coordinates by $u_1 \mapsto z^3u_1 + z^2u_2^j$.

\end{notation}

To identify the structure of affine bundle, we just write $(z,u_2)$ and $(\xi, v_2)$ as the coordinates of $ Z_{(-1)}$. 

\begin{theorem}\label{afbundles}
If $j_1\neq j_2$ then $E(j_1)$ and  $E(j_2)$ are not isomorphic as affine bundles. 
\end{theorem}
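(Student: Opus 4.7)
My plan is to suppose there exists an isomorphism $\phi\colon E(j_1) \to E(j_2)$ of affine bundles over $Z_{-1}$ and extract from the compatibility relations a condition that forces $j_1 = j_2$.

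First I will write $\phi$ in the given trivialisations. On the chart $U = \mathbb{C}^2_{\{z,u_2\}}$ the map has the form $u_1 \mapsto A(z,u_2)u_1 + B(z,u_2)$ with $A$ holomorphic and nowhere zero, and on $V = \mathbb{C}^2_{\{\xi,v_2\}}$ it has the form $v_1 \mapsto C(\xi,v_2)v_1 + D(\xi,v_2)$. Imposing compatibility on the overlap with the two transition rules $u_1 \mapsto z^3 u_1 + z^2 u_2^{j_\alpha}$ and matching the piece linear in $u_1$ gives
\[ C(z^{-1},z^{-1}u_2) \;=\; A(z,u_2), \]
so $A$ glues to a global nowhere-vanishing holomorphic function on $Z_{-1}$. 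Since $\pi_{*}\mathcal{O}_{Z_{-1}} \cong \bigoplus_{n\geq 0} \mathcal{O}_{\mathbb{P}^1}(-n)$ for the projection $\pi\colon Z_{-1}= \Tot(\mathcal{O}_{\mathbb{P}^1}(1)) \to \mathbb{P}^1$, I get $\HH^0(Z_{-1},\mathcal{O}) = \mathbb{C}$ and hence $A = \lambda$ for some $\lambda \in \mathbb{C}^*$.

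With $\lambda$ in hand, the constant-in-$u_1$ piece of the compatibility becomes
\[ z^3 B(z,u_2) - D(z^{-1}, z^{-1}u_2) \;=\; \lambda z^2 u_2^{j_1} - z^2 u_2^{j_2}. \]
Expanding $B = \sum_{l,s\geq 0} b_{ls}z^l u_2^s$ shows that $z^3 B$ produces only monomials of $z$-degree $\geq 3$; expanding $D = \sum_{l,s\geq 0} d_{ls}\xi^l v_2^s$ and substituting the overlap identifications shows that $D(z^{-1},z^{-1}u_2)$ produces only monomials of $z$-degree $\leq 0$. The right-hand side lives entirely in $z$-degree $2$, a range disjoint from both contributions, so it must vanish identically. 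This forces $\lambda u_2^{j_1} = u_2^{j_2}$, which is impossible when $j_1\neq j_2$.

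The only delicate point is the first step, namely establishing that the linear part $A$ must be constant; this is where the structure-sheaf computation on $Z_{-1}$ enters. Once that is in hand the remainder of the argument reduces to comparing powers of $z$ in three disjoint ranges, which is straightforward.
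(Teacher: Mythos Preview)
Your proof is correct and follows the same strategy as the paper's: write the putative affine isomorphism in both trivialisations, equate the two compositions over the overlap, and separate the terms linear in $u_1$ from the affine remainder to obtain a contradiction. Your argument is in fact cleaner at the key step: the paper simply asserts that the only solution of the resulting system is $b^U=b^V=0$ and $A^U=A^V=u_2^{j_2-j_1}$ (which then fails to be nowhere vanishing), whereas you first establish that the linear part must be a nonzero constant via $\HH^0(Z_{-1},\mathcal O)=\mathbb C$ and then dispose of the affine part by the disjoint $z$-degree ranges, which makes the impossibility transparent.
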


\begin{proof} We may assume, without loss of generality, that $j_1 < j_2$.
Suppose the bundles were isomorphic, and let $T: E(j_1) \to E(j_2)$ be an affine bundle isomorphism.
Then $T$ is an affine transformation on each fiber, i.e., there exist holomorphic functions 
\[
\begin{array}{rcl}
A^U\colon \mathbb{C}^2_{z,u_2} & \to & \textrm{GL}(1) =  \mathbb{C} - \{0\} \\
A^V \colon \mathbb{C}^2_{\xi, v_2} & \to & \textrm{GL}(1) = \mathbb{C} - \{0\} \\
b^U \colon \mathbb{C}^2_{z,u_2} & \to & \mathbb{C}\\
b^V \colon \mathbb{C}^2_{\xi,v_2} & \to & \mathbb{C}
\end{array}
\]
such that
\begin{align}
\label{affine1}
T_{z,u_2}(u_1) & = A^U_{z,u_2}(u_1) + b^U_{z,u_2} \\
\label{affine2}T_{\xi, v_2} (v_1) & = A^V_{\xi, v_2}(v_1) + b^V_{\xi, v_2}.
\end{align}
Let $\phi_U$ and $\varphi_U$ denote the transition functions  
from $(z, u_1, u_2)$ coordinates to $(\xi, v_1, v_2)$ coordinates of $E(j_1)$ and $E(j_2)$, respectively.
The relations \eqref{affine1} and \eqref{affine2} should agree on the intersection of $U$ and $V$, i.e., Diagram \ref{diag1} must commute. Then
\begin{align*}
T \circ \phi_U (z, u_1, u_2) & = \varphi_U \circ T (z, u_1, u_2) \\
T (z^{-1}, z^3u_1 + z^2u_2^{j_1}, z^{-1}u_2) & = \varphi_U (z, A^U_{z, u_2}(u_1) + b^U_{z,u_2}, u_2) \\
(z^{-1}, A^V_{z^{-1},u_2}(z^3u_1 + z^2u_2^{j_1}) + b^V_{z^{-1}, u_2} ,z^{-1}u_2 ) & = (z^{-1}, z^3(A^U_{z, u_2}(u_1) + b^U_{z,u_2}) + z^2u_2^{j_2} ,z^{-1}u_2).
\end{align*}

By comparing the second coordinates on both sides of the equations we have:
\begin{align*}
A^V_{z^{-1},u_2}\left(z^3u_1 + z^2u_2^{j_1}\right) + b^V_{z^{-1},u_2} 
& = z^3\left(A^U_{z, u_2}(u_1) + b^U_{z,u_2}\right) + z^2u_2^{j_2}\\
A^V_{z^{-1},u_2}z^3u_1 + A^V_{z^{-1},u_2}z^2u_2^{j_1} + b^V_{z^{-1},u_2} 
& = A^U_{z,u_2}z^3u_1 + b^U_{z,u_2}z^3 + z^2u_2^{j_2}.
\end{align*}
By comparing the linear and affine parts we get:
\begin{align}
\label{compare1}
 A^V_{z^{-1},u_2}z^3  
 & = A^U_{z,u_2}z^3  \\
 \label{compare2}
 A^V_{z^{-1},u_2}z^2u_2^{j_1} + b^V_{z^{-1},u_2} 
 & =  b^U_{z,u_2}z^3 + z^2u_2^{j_2} .
\end{align}
The only solution is $b^U_{z,u_2} = b^V_{z^{-1},u_2} = 0$ and $A^V_{z^{-1},u_2}  
 = A^U_{z,u_2} = u_2^{j_2-j_1}$, which is not possible because then $A^U$ and $A^V$ would vanish on $u_2=0$ and $v_2=0$ respectively and by definition they must be everywhere non-zero.

We conclude that the bundles $E(j_1)$ and $E(j_2)$ are not isomorphic.
\begin{figure} 
\[
\begin{array}{ccccc}
E(j_1) \supset U \supset & U \cap V & \xrightarrow[]{T_U(\cdot) = A^U(\cdot) + b^U} & U' \cap V' & \subset U' \subset E(j_2) \\
&\left. \phi_U \right\downarrow & & \left\downarrow \varphi_U \right. & \\
E(j_1) \supset V \supset & U \cap V &
\xrightarrow[T_V(\cdot) = A^V(\cdot) + b^V]{}
& U' \cap V' & \subset V' \subset E(j_2)  
\end{array}
\]
\caption{Diagram illustrating the commutativity of the transition functions of $E(j_1)$ and $E(j_2)$ with the restrictions of $T$ to the intersections $U \cap V \subset U$ and $U \cap V \subset V$.}
\label{diag1}
\end{figure}
\end{proof}

\subsection{Holomorphic maps between the $W_k$'s.}

We describe holomorphic maps between $W_2$ and $W_3$ and between their deformation families. 
Existence of such holomorphic maps is not at all  a priori guaranteed.
 
\begin{lemma} \label{phi} The map $\varphi\colon W_2 \rightarrow W_3$ defined by 

$$\varphi\vert_U (z,u_1,u_2) =  (z, zu_1^2,u_2) $$
$$\varphi\vert_V (\xi, v_1,v_2) =  (\xi, v_1^2, \xi v_2)$$
is holomorphic. 
\end{lemma}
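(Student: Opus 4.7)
The plan is to check holomorphy by the standard chart argument: verify that $\varphi|_U$ and $\varphi|_V$ are each holomorphic on their respective open sets, and then show that they agree on the overlap $U \cap V$ (in both source and target), so that they patch to a well-defined global holomorphic map $\varphi : W_2 \to W_3$.

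First I would observe that $\varphi|_U$ is a polynomial map $\mathbb{C}^3_{\{z,u_1,u_2\}} \to \mathbb{C}^3_{\{z,u_1,u_2\}}$ landing in the $U$-chart of $W_3$, and similarly $\varphi|_V$ is polynomial landing in the $V$-chart of $W_3$; both are therefore holomorphic on their domains. The real content of the lemma is the compatibility verification on $U \cap V$.

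For a point of $W_2$ lying in $U \cap V$ (so $z \neq 0$), its two coordinate descriptions are related by the $W_2$ transition \eqref{canonical} with $k=2$, i.e.\ $(\xi, v_1, v_2) = (z^{-1}, z^2 u_1, u_2)$. I then compute the image in the $V$-coordinates of $W_3$ in the two possible ways: applying $\varphi|_V$ to $(z^{-1}, z^2 u_1, u_2)$ gives
\[
\bigl(z^{-1},\, (z^2 u_1)^2,\, z^{-1}\cdot u_2\bigr) = \bigl(z^{-1},\, z^{4} u_1^{2},\, z^{-1} u_2\bigr),
\]
while applying the $W_3$ transition \eqref{W3} to $\varphi|_U(z,u_1,u_2) = (z, z u_1^{2}, u_2)$ gives
\[
\bigl(z^{-1},\, z^{3}\cdot z u_1^{2},\, z^{-1} u_2\bigr) = \bigl(z^{-1},\, z^{4} u_1^{2},\, z^{-1} u_2\bigr).
\]
These coincide, so the two local descriptions determine the same map on the overlap, and $\varphi$ is globally well-defined.

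There is no real obstacle here beyond arithmetic; the only thing to be careful about is matching the weights in \eqref{canonical} for $k=2$ (giving $\mathcal{O}(-2)\oplus\mathcal{O}(0)$) against those for $W_3$ in \eqref{W3} (giving $\mathcal{O}(-3)\oplus\mathcal{O}(-1)$ on the fiber side after the transition), which is exactly what forces the asymmetric expressions $zu_1^{2}$ and $\xi v_2$ in the definition of $\varphi$: the factor of $z$ in $zu_1^{2}$ corrects the weight mismatch between $u_1^2$ (weight $-4$) and the $W_3$ fiber coordinate $u_1$ (weight $-3$), and the factor $\xi$ in $\xi v_2$ corrects the mismatch between $v_2$ (weight $0$) and the $W_3$ fiber coordinate $v_2$ (weight $-1$ in $V$-coordinates). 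Once these exponent bookkeepings are made explicit, the verification above is immediate.
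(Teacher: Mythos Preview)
Your proof is correct and follows essentially the same approach as the paper: both verify the commutativity $T_3 \circ \varphi_U = \varphi_V \circ T_2$ by direct computation on the overlap, obtaining $(z^{-1}, z^4 u_1^2, z^{-1} u_2)$ from both sides. Your additional remarks on the weight bookkeeping are sound motivation but not part of the paper's argument.
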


\begin{proof}

We prove that $T_3 \circ \varphi_U=\varphi_V \circ T_2$:

\begin{align*}
T_3 \circ \varphi_U (z, u_1, u_2) & = 
T_3 ( z, zu_1^2, u_2 ) \\
& = (z^{-1}, z^4u_1^2, z^{-1}u_2) \\
& = \varphi_V (z^{-1},z^2u_1 ,u_2 ) \\
& = \varphi_V \circ T_2(z, u_1, u_2) .
\end{align*}

In a diagram:

\[
\begin{array}{ccc}
(z, u_1, u_2) & \stackrel{\varphi_U}{\longmapsto} & (z, zu_1^2u_2) \\
 T_2 \downarrow & & \downarrow T_3\\
(z^{-1}, z^2u_1, u_2) & \stackrel{\varphi_V}{\longmapsto} & (z^{-1}, z^4u_1^2, z^{-1}u_2) \\
(\xi, v_1,v_2) & \stackrel{\varphi_V}{\longmapsto} & (\xi, v_1^2, \xi v_2)
\end{array}
\]

\end{proof}

\begin{lemma} \label{phi3} The map $\psi\colon W_3 \rightarrow W_2$ defined by 

$$\psi\vert_U (z,u_1,u_2) =  (z, u_1, z^2u_1u_2) $$
$$\psi\vert_V (\xi, v_1,v_2) =  (\xi, \xi v_1, v_1 v_2)$$
is holomorphic. 
\end{lemma}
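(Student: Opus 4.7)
The plan is to mirror the proof of Lemma \ref{phi}. Each local expression defining $\psi$ is visibly polynomial in the chart coordinates, hence holomorphic on its chart, so the only content is to check that $\psi|_U$ and $\psi|_V$ are compatible on the overlap $U\cap V$. By definition of a holomorphic map between manifolds glued from charts, it suffices to verify that the diagram
\[
\begin{array}{ccc}
(z,u_1,u_2) & \xrightarrow{\psi|_U} & (z, u_1, z^2 u_1 u_2) \\
T_3 \downarrow & & \downarrow T_2 \\
(\xi,v_1,v_2) & \xrightarrow{\psi|_V} & (\xi, \xi v_1, v_1 v_2)
\end{array}
\]
commutes on $U\cap V$, where $T_3$ and $T_2$ are the transition functions of $W_3$ and $W_2$ from \eqref{canonical}.

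The verification is a direct computation in two steps. First, apply $\psi|_U$ then $T_2$:
\[
T_2\circ\psi|_U(z,u_1,u_2) \;=\; T_2(z, u_1, z^2 u_1 u_2) \;=\; (z^{-1}, z^2 u_1, z^2 u_1 u_2).
\]
Second, apply $T_3$ then $\psi|_V$:
\[
\psi|_V\circ T_3(z,u_1,u_2) \;=\; \psi|_V(z^{-1}, z^3 u_1, z^{-1} u_2) \;=\; (z^{-1}, z^{-1}\cdot z^3 u_1, z^3 u_1\cdot z^{-1} u_2) \;=\; (z^{-1}, z^2 u_1, z^2 u_1 u_2).
\]
Since the two outputs agree, the local expressions glue to a well-defined map on all of $W_3$, and it is holomorphic because each restriction is polynomial.

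There is no real obstacle here; the only thing to be careful about is using the correct transition law for $W_2$, namely $(\xi,v_1,v_2)=(z^{-1},z^2 u_1, u_2)$ from \eqref{canonical}, rather than the deformed rules used in earlier subsections. The powers of $z$ cancel exactly thanks to the asymmetric shape $(\xi, \xi v_1, v_1 v_2)$ of $\psi|_V$, which is precisely what compensates for the twist $z^3$ vs.\ $z^2$ between the second factors of $W_3$ and $W_2$ and simultaneously converts $v_2$ on $W_3$ (twisted by $z^{-1}$) into the untwisted $u_2$-factor on $W_2$ multiplied by $z^2u_1$.
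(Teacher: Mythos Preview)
Your proof is correct and follows essentially the same approach as the paper: both verify the compatibility condition $T_2\circ\psi|_U=\psi|_V\circ T_3$ by the identical direct computation yielding $(z^{-1},z^2u_1,z^2u_1u_2)$ on each side. Your additional explanatory paragraph about why the powers of $z$ cancel is a nice touch not present in the paper, but the mathematical content is the same.
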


\begin{proof}

We prove that $T_2 \circ \psi_U = \psi_V \circ T_3$:

\begin{align*}
T_2 \circ \psi_U (z, u_1, u_2) & = 
T_2 (z, u_1, z^2u_1u_2) \\
& = (z^{-1}, z^2u_1, z^2u_1u_2) \\
& = \psi_V (z^{-1},z^3u_1 , z^{-1}u_2 ) \\
& = \psi_V \circ T_3 (z, u_1, u_2) .
\end{align*}

In a diagram:
\[
\begin{array}{ccc}
(z, u_1, u_2) & \stackrel{\psi_U}{\longmapsto} & (z, zu_1^2u_2) \\
 T_3 \downarrow & & \downarrow T_2\\
(z^{-1}, z^2u_1, u_2) & \stackrel{\psi_V}{\longmapsto} & (z^{-1}, z^4u_1^2, z^{-1}u_2) \\
(\xi,v_1,v_2) & \stackrel{\psi_V}{\longmapsto} & (\xi, v_1^2, \xi v_2)
\end{array}
\]\end{proof}

Extending the map to deformations  and repeating a proof similar to the one of Lemma \ref{phi3}  we obtain a 
holomorphic map between deformation spaces:

\begin{proposition} \label{maps} The map $\overline{\varphi} \colon \mathcal{W}_3 \to \mathcal{W}_2$ 
\begin{align*}
\overline{\varphi}\vert_U (z,u_1,u_2) & =  \left( z, u_1,z^2u_1u_2 + z\sum_{s \geq 0}t_su_2^{s+1} \right) \\
\overline{\varphi}\vert_V (\xi, v_1,v_2) & =  (\xi, \xi v_1, v_1 v_2)
\end{align*}
is holomorphic.
\end{proposition}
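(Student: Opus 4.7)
The plan is to mimic the computation in the proof of Lemma \ref{phi3} in the deformed setting. Since $\overline{\varphi}$ is defined by two local formulas, one on the $U$-chart and one on the $V$-chart, the verification of holomorphy reduces to checking that these formulas are compatible on the overlap $U \cap V$, namely
\[
T_2 \circ \overline{\varphi}\vert_U \;=\; \overline{\varphi}\vert_V \circ T_3 \quad \text{on } U \cap V,
\]
where $T_2$ is the transition function of $\mathcal{W}_2$ given in \eqref{W2family} and $T_3$ is the transition function of the appropriate fiber of $\mathcal{W}_3$ coming from Proposition \ref{defW3}. Once this identity is established, each local expression is a polynomial (and hence holomorphic) function of the chart coordinates, so the resulting global map is holomorphic.

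My first step is to apply $T_2$ to $\overline{\varphi}\vert_U(z, u_1, u_2) = (z, u_1, z^2 u_1 u_2 + z\sum_s t_s u_2^{s+1})$ by substituting $u'_2 = z^2 u_1 u_2 + z\sum_s t_s u_2^{s+1}$ into \eqref{W2family}. In parallel I apply $\overline{\varphi}\vert_V$ to $T_3(z, u_1, u_2)$ and compare the two resulting triples coordinate-by-coordinate. The first coordinate is the trivial identity $z^{-1} = z^{-1}$. The third coordinate identity $v_1 v_2 = z^2 u_1 u_2 + z\sum_s t_s u_2^{s+1}$ fixes the deformation term in $\mathcal{W}_3$ to be $z^2\sum_s t_s u_2^s$, i.e.\ the slice of the family of Proposition \ref{defW3} corresponding to $t'_s = 0$, exactly as in the third-coordinate matching of the diagram at the end of Lemma \ref{phi3}.

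The main technical step is matching the second coordinate. On the $T_2 \circ \overline{\varphi}\vert_U$ side one obtains $z^2 u_1 + \sum_s t_s z (u'_2)^s$, which after substitution involves combinations in the $t_s$ of every order. On the $\overline{\varphi}\vert_V \circ T_3$ side one obtains the simpler expression $\xi v_1 = z^2 u_1 + z\sum_s t_s u_2^s$. The bulk of the work is the algebraic reconciliation of these two expressions, expanding $(u'_2)^s$ and identifying how the higher-order combinations of the $t_s$ on the left are absorbed by the chosen fiber of $\mathcal{W}_3$ on the right. This is the principal obstacle; once the identification is carried out, commutativity of the diagram follows and the proposition is established, since both local expressions are manifestly polynomial in their arguments and hence holomorphic on $U$ and $V$ respectively.
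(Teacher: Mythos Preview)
Your overall strategy --- verify the compatibility $T_2 \circ \overline{\varphi}\vert_U = \overline{\varphi}\vert_V \circ T_3$ on the overlap, exactly as in Lemma~\ref{phi3} --- is precisely what the paper does; its own proof consists of the single sentence ``repeating a proof similar to the one of Lemma~\ref{phi3}''. So at the level of method you are aligned with the paper.

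However, your proposal has a genuine gap at the very point you flag as ``the principal obstacle''. You set up the second-coordinate comparison but do not carry it out, and in fact as written the two sides do \emph{not} agree. With $t'_s=0$ (forced by the third coordinate, as you correctly note), the $\overline{\varphi}\vert_V \circ T_3$ side gives
\[
\xi v_1 \;=\; z^{-1}\Bigl(z^3 u_1 + z^2\sum_{s\ge 0} t_s u_2^s\Bigr) \;=\; z^2 u_1 + z\sum_{s\ge 0} t_s u_2^s,
\]
whereas the $T_2 \circ \overline{\varphi}\vert_U$ side gives
\[
z^2 u_1 + z\sum_{s\ge 0} t_s\,(u'_2)^s, \qquad u'_2 = z^2 u_1 u_2 + z\sum_{r\ge 0} t_r u_2^{r+1}.
\]
Already at $s=1$ one has $z t_1 u'_2 = t_1\bigl(z^3 u_1 u_2 + z^2\sum_r t_r u_2^{r+1}\bigr)$ on the left versus $z t_1 u_2$ on the right; these are not equal, and there is no freedom left in the $\mathcal{W}_3$ fiber (you have already fixed $t'_s=0$) to absorb the discrepancy. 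So the ``algebraic reconciliation'' you defer cannot be completed in the way you suggest: the higher-order terms in the $t_s$ do not cancel, and your sketch does not explain what mechanism would make them cancel. Either the parameters on the target $\mathcal{W}_2$ fiber must be allowed to differ from those on the source (in which case you must exhibit the correct reparametrisation and check it is holomorphic in the $t_s$), or the statement needs a more careful interpretation before the Lemma~\ref{phi3} template applies. As it stands, the proposal is a plan rather than a proof, and the step you postpone is exactly the one that fails under the interpretation you have adopted.
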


\subsection{Infinitely many deformations of $W_k$}\label{general}
In further generality we might consider 
$$
W_{k_1,k_2} = \Tot (\mathcal{O}_{\mathbb{P}^1}(-k_1) \oplus \mathcal{O}_{\mathbb{P}^1}(-k_2)),
\quad  \mbox{with} \,\,
k_1 \geq k_2.$$
Using the same methods from the previous sections we deduce  the following  results:
\begin{itemize}
\item If $0 \geq k_1 \geq  k_2 $ then the threefold is formally rigid, i.e., $\HH^1(W_{k_1,k_2}, TW_{k_1,k_2})=0$.
\item If $k_1 \geq k_2 > 0$ then the threefold has a finite dimensional deformation space.

\item If  $k_1 > k_2=0$, then the threefold has an infinite dimensional deformation space.

\item If $k_1 > 0 > k_2$, then  the threefold has an infinite dimensional deformation space.

\end{itemize}

 Using Theorem \ref{W2def} we will now also construct infinitely many deformations for all cases $k > 2$, thus proving:
 
\begin{theorem} \label{distinct}There are infinitely many distinct deformations of \, $W_k$ for $k>1$.
\end{theorem}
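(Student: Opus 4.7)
The case $k=2$ is Theorem~\ref{W2def}, so it remains to handle $k\geq 3$. My strategy is to mirror the proof of Theorem~\ref{W2def} directly: construct, for each integer $y\geq 0$, an explicit deformation $\mathcal{W}_k(y)$ of $W_k$, and distinguish infinitely many of them by showing that $h^1(\mathcal{W}_k(y), T\mathcal{W}_k(y))$ takes infinitely many distinct values, using the fact that this dimension is a biholomorphic invariant. I would define $\mathcal{W}_k(y)$ by perturbing the canonical transition \eqref{canonical} in analogy with $\mathcal{W}_2(y)$; a natural candidate is
\[
(\xi, v_1, v_2) \;=\; \bigl(z^{-1},\; z^k u_1 + z^{k-1} u_2^y,\; z^{-k+2} u_2\bigr),
\]
which specializes to $\mathcal{W}_3(y)$ of Notation~\ref{afbundle} when $k=3$. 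One first verifies, as in Lemma~\ref{invertible1}, that this defines a genuine holomorphic change of coordinates on $U\cap V$, so that $\mathcal{W}_k(y)$ is indeed a complex manifold deforming $W_k$.

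Next, paralleling the proof of Theorem~\ref{h1s}, I would compute $\HH^1(\mathcal{W}_k(y), T\mathcal{W}_k(y))$ via a \v{C}ech calculation on the cover $\{U,V\}$. The tangent transition matrix $\mathbb{J}$ is the Jacobian of the perturbed transition; for $k\geq 3$ it acquires extra off-diagonal entries both from $\partial_{u_2}(z^{k-1}u_2^y)$ and from $v_2=z^{-k+2}u_2$. A general 1-cocycle in $U$-coordinates is reduced first modulo monomials holomorphic on $U$ (those with nonnegative powers of $z$), and then modulo $\mathbb{J}\cdot(\text{holomorphic on }V)$, using the inverse substitutions $\xi=z^{-1}$, $u_2=\xi^{-(k-2)}v_2$, $u_1=\xi^k v_1-\xi^{\,k-1-y(k-2)}v_2^y$. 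I would expect surviving generators of $\HH^1$ of the form $\bigl[0,\; z^l u_2^s,\; 0\bigr]^T$ for finitely many pairs $(l,s)$. An examination of the resulting coboundary equation, exactly as in Theorem~\ref{h1s}, then shows that the key monomial $z^{-1}u_1^0 u_2^s$ on the left-hand side cannot appear on the coboundary side unless $s$ exceeds a threshold growing linearly in $y$. A linear-in-$y$ number of cocycles $\sigma_s$ thus remain nonzero and linearly independent, yielding $h^1(\mathcal{W}_k(y), T\mathcal{W}_k(y))\to\infty$ as $y\to\infty$, and in particular infinitely many distinct values. Since $h^1$ of the tangent sheaf is a biholomorphic invariant, this produces infinitely many pairwise non-isomorphic deformations of $W_k$.

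The main obstacle I foresee is bookkeeping. For $k=2$ the simplification $v_2=u_2$ keeps the $z$- and $u_2$-exponents cleanly separated. For $k\geq 3$, the nontrivial substitution $u_2=\xi^{-(k-2)}v_2$ shifts $z$-exponents in every monomial, and the nonzero $(3,1)$ and $(2,3)$ entries of $\mathbb{J}$ couple the second and third rows of the tangent cocycle. This turns a single-graded normalization into a bigraded one, and the challenge is to verify that, despite this mixing, a specific monomial on the left of the coboundary equation still has no source on the right. Structurally the argument is parallel to Theorem~\ref{h1s}, but the index tracking is strictly more delicate; a secondary check would be that the perturbation $z^{k-1}u_2^y$ itself represents a nonzero class in $\HH^1(W_k,TW_k)$ for each $y\geq 0$, which is expected in analogy with Lemma~\ref{cocyclesw3}.
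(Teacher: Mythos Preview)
Your plan is a direct generalization of the $k=2$ argument, and is plausible in outline, but the paper takes a much shorter and qualitatively different route. Rather than redoing the \v{C}ech computation of $\HH^1(\mathcal{W}_k(y),T\mathcal{W}_k(y))$ for each $k$, the paper observes that $W_k$ deforms to $W_2$: the one-parameter family of rank~2 bundles on $\mathbb{P}^1$ with transition $\left(\begin{smallmatrix} z^k & t z^{k-2}\\ 0 & z^{-k+2}\end{smallmatrix}\right)$ has total space $W_k$ at $t=0$ and (after an explicit change of coordinates, Theorem~\ref{Wq}) total space $W_2$ at $t\neq 0$. Since $W_2$ in turn admits the infinitely many pairwise non-isomorphic deformations $\mathcal{W}_2(y)$ of Theorem~\ref{W2def}, composing the two families produces infinitely many pairwise non-isomorphic deformations of $W_k$. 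No new cohomology computation is needed beyond the $k=2$ case already done.

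Your approach, by contrast, would require carrying out the full tangent-cohomology computation in the presence of the extra coupling you correctly identify (the nontrivial $(3,1)$ and $(2,3)$ Jacobian entries and the substitution $u_2=\xi^{-(k-2)}v_2$). Beyond bookkeeping, there is a genuine risk: your distinguishing invariant is the \emph{number} $h^1(\mathcal{W}_k(y),T\mathcal{W}_k(y))$, and for this to separate infinitely many $y$ you implicitly need these dimensions to be finite (as they happen to be for $k=2$). For $k\geq 3$ the undeformed space already has $h^1(W_k,TW_k)=\infty$, and it is not clear a priori that a single monomial perturbation $z^{k-1}u_2^y$ kills all but finitely many of the infinite family of cocycles (for $k=3$ there are already two $z$-strata $z^{-1}u_2^s$ and $z^{-2}u_2^s$, cf.\ Lemma~\ref{cocyclesw3}). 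If some $h^1$ remain infinite, your invariant collapses and you would need a finer one. The paper's reduction sidesteps this entirely. (Minor point: your inversion formula should read $u_1=\xi^k v_1-\xi^{\,1-y(k-2)}v_2^y$, not $\xi^{\,k-1-y(k-2)}$.)
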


The idea of the proof comes from observing deformations of vector bundles of $\mathbb P^1$ as illustrated in the following example.

\begin{example}
Consider the family of rank 2 vector bundles on $\mathbb P^1$ parametrised by $t$, described by transition functions 
in canonical coordinates as
$$
\left[
\begin{matrix}
z^3 & tz^2 \\ 0 & z^{-1}
\end{matrix}
\right].$$
Taking total spaces of these deformations, when $t=0$ we obtain $W_3$ and when $t=1$ we obtain $W_2$. 
Indeed, making a change of coordinates
$$\left[
\begin{matrix}
z^3 & z^2 \\ 0 & z^{-1}
\end{matrix}
\right] \left[
\begin{matrix}
1 & 0 \\ -z & 1
\end{matrix}
\right]=\left[
\begin{matrix}
0 & z^2 \\ -1 & z^{-1}
\end{matrix}
\right]\sim \left[
\begin{matrix}
z^2 & 0 \\ z^{-1} & -1
\end{matrix}
\right].$$
Changing of coordinates again
$$\left[
\begin{matrix}
1 & 0 \\ -z^{-3} & 1
\end{matrix}
\right] \left[
\begin{matrix}
z^2 & 0 \\ z^{-1} & -1
\end{matrix}
\right]=\left[
\begin{matrix}
z^2 & 0 \\ 0 & -1
\end{matrix}
\right].
 $$
 This example is just a concrete way to write in coordinates a family of extensions
 of the form 
 $$0 \rightarrow \mathcal O_{\mathbb P^1} (-3) \rightarrow E_t \rightarrow \mathcal O_{\mathbb P^1}(1) \rightarrow 0$$ such that 
 $$W_t = \Tot (E_t).$$
 \end{example}
 
This implies that deformations of $W_2$ induce deformations on $W_3$. 
Similarly, we can use this method to induce  deformations on $W_k$ with $k>2$ using deformations on $W_2$.
We now formalize this argument.

\begin{proposition}\label{cocyclewk}
Let $k > q > 0$ be 2 positive integers. Then in $\HH^1(W_k, TW_k)$ the class of the section
\[
\left[
\begin{matrix}
0 \\ z^{-k+q}u_2 \\ 0
\end{matrix}
\right]
\]
is not zero.
\end{proposition}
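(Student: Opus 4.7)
The plan is to argue by contradiction: assume $\sigma = [0, z^{-k+q}u_2, 0]^T$ is null-cohomologous on $W_k$, and derive a contradiction by examining the second coordinate of the coboundary equation, following the same bookkeeping used in the proof of Theorem \ref{h1s}, adapted to the canonical transition \eqref{canonical}.

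First I would write down the inverse transition. From $z=\xi^{-1}$, $u_1=\xi^k v_1$, $u_2=\xi^{2-k}v_2$, the inverse Jacobian is
\[
\mathbb{J}^{-1}=\left[\begin{matrix} -\xi^{-2} & 0 & 0 \\ k\xi^{k-1}v_1 & \xi^k & 0 \\ (2-k)\xi^{1-k}v_2 & 0 & \xi^{2-k} \end{matrix}\right].
\]
If $\sigma$ were a coboundary, there would exist $\alpha=[\alpha^1,\alpha^2,\alpha^3]^T$ holomorphic on $U$, with $\alpha^j=\sum_{L,I,S\geq 0}\alpha^j_{LIS}z^L u_1^I u_2^S$, and $\beta=[\beta^1,\beta^2,\beta^3]^T$ holomorphic on $V$, with $\beta^j=\sum_{l,i,s\geq 0}\beta^j_{lis}\xi^l v_1^i v_2^s$, such that $\sigma=\alpha+\mathbb{J}^{-1}\beta$ on $U\cap V$, after substituting $\xi=z^{-1}$, $v_1=z^k u_1$, $v_2=z^{2-k}u_2$. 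Under this substitution each factor $\xi^l v_1^i v_2^s$ becomes $z^{-l+ki+(2-k)s}u_1^i u_2^s$.

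The key step is to extract the coefficient of the monomial $z^{-k+q}u_1^0 u_2^1$ from the three summands appearing in the middle row. The $\alpha^2$ term contains only non-negative powers of $z$, whereas $-k+q<0$; the off-diagonal contribution $k\beta^1\xi^{k-1}v_1$ expands to $k\beta^1_{lis}\, z^{-l+1+ki+(2-k)s}u_1^{i+1}u_2^s$, producing only monomials with $u_1$-exponent at least $1$; and the diagonal term $\beta^2\xi^k$ expands to $\beta^2_{lis}\, z^{-l-k+ki+(2-k)s}u_1^i u_2^s$, so matching the target monomial forces $i=0$, $s=1$, and $l=2-k-q$. Since $k\geq 2$ and $q\geq 1$, this yields $l\leq -1<0$, violating the holomorphy constraint $l\geq 0$. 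All three contributions therefore vanish, contradicting the coefficient $1$ on the left-hand side.

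The main pitfall is keeping the exponents straight: one must substitute $v_2=z^{2-k}u_2$ (not $z^{k-2}u_2$) when translating a $V$-holomorphic section into $U$-coordinates, so that the resulting condition $l=2-k-q$ is indeed strictly negative for every admissible pair $(k,q)$ and not just for the boundary case $k=q+1$. Once this is done correctly, the argument is uniform in $(k,q)$ and needs no case split; the first and third coordinates of the coboundary equation play no role, since the obstruction already appears in the middle row.
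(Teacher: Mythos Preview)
Your argument is correct and follows essentially the same route as the paper: both write out a general coboundary $\alpha+\mathbb{J}^{-1}\beta$ in $U$-coordinates and check that the monomial $z^{-k+q}u_2$ cannot occur in the second entry, since the $\alpha^2$ part has only non-negative $z$-powers, the $\beta^1$ contribution carries a factor $u_1$, and the $\beta^2$ contribution forces the $z$-exponent to be $-2k+2-l$ with $l\geq 0$, which is never $-k+q$. Your bookkeeping is in fact slightly more careful than the paper's (correct sign on $-\xi^{-2}$ and correct exponent $1-l+ki+(2-k)s$ in the off-diagonal term), but the substance is identical.
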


\begin{proof} This lemma also follows from Thm.\thinspace \ref{Wq} but we present an independent proof
for completeness.
A 1-coboundary with values in $TW_k$ is given by

\begin{align*}
\tau & = \sum_{l \geq 0} \sum_{r \geq 0} \sum_{s \geq 0} 
\left[ \begin{matrix}
\xi^{-2} & 0 & 0 \\
k\xi^{k-1}v_1 & \xi^k & 0 \\
(-k+2)\xi^{-k+1} v_2 & 0 & \xi^{-k+2}
\end{matrix} \right]
\left[ \begin{matrix}
\beta^1_{lrs} \\ \beta^2_{lrs} \\ \beta^3_{lrs}
\end{matrix} \right] 
\xi^l v_1^r v_2^s
+
\left[\begin{matrix}
\alpha^1_{lrs} \\ \alpha^2_{lrs} \\ \alpha^3_{lrs}
\end{matrix} \right] z^l u_1^r u_2^s\\
& = \sum_{l \geq 0} \sum_{r \geq 0} \sum_{s \geq 0} 
\left[\begin{matrix}
\beta^1 z^2 \\
k\beta^1 z^{-1}u_1 + \beta^2 z^{-k} \\
(-k+2)\beta^1 z u_2 + \beta^3 z^{k-2}
\end{matrix} \right]
z^{-l+kr+(-k+2)s}u_1^ru_2^s
+
\left[\begin{matrix}
\alpha^1_{} \\ \alpha^2_{} \\ \alpha^3_{}
\end{matrix} \right] z^l u_1^r u_2^s
\end{align*}
(in the second line we omitted the subindices $lrs$).
Now we see that it is not possible to obtain 
$\left[ \begin{matrix}
0 & z^{-k+q}u_2 & 0
\end{matrix} \right]^T$ as a coboundary. 
Indeed, the possibilities for having $z^{m}u_2$ for $m < 0$ would be  obtained from the 
monomials with  coefficients $\beta^2_{l01}$, for which the power of $z$ is $-2k+2-l$, and thus impossible to be equal to $-k+q$.
\end{proof}

By using the cocycles from Proposition \ref{cocyclewk} we obtain a $(k-1)$-parameter deformation family for $W_k$:

\begin{equation}\label{familywk}
\boxed{
(\xi, v_1, v_2) = \left(z^{-1}, z^ku_1 + \sum_{q=0}^{k-1} t_qz^qu_2, z^{-k+2}u_2 \right).
}
\end{equation}

\begin{theorem}\label{Wq}
Let $k > q > 0$. Then the deformation of $W_k$ given by
\begin{equation}\label{defq}
\boxed{
(\xi, v_1, v_2) = \left(z^{-1}, z^ku_1 + z^qu_2, z^{-k+2}u_2 \right).
}
\end{equation}
is isomorphic to $W_q$.

\end{theorem}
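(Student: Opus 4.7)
The first observation is that, exactly as in the motivating example just before the statement, the deformation \eqref{defq} is the total space of the rank-$2$ holomorphic vector bundle $E \to \mathbb{P}^1$ whose transition matrix (from the $U$-frame $\{u_1,u_2\}$ to the $V$-frame $\{v_1,v_2\}$) is
$$
A \;=\; \begin{pmatrix} z^k & z^q \\ 0 & z^{-k+2} \end{pmatrix}.
$$
Since holomorphically isomorphic vector bundles have biholomorphic total spaces, it is enough to produce matrices $P_U \in \mathrm{GL}_2(\mathbb{C}[z])$ (holomorphic on $U$) and $P_V \in \mathrm{GL}_2(\mathbb{C}[\xi])$ (holomorphic on $V$) such that
$$
P_V \, A \, P_U^{-1} \;=\; \begin{pmatrix} z^q & 0 \\ 0 & z^{-q+2} \end{pmatrix},
$$
because the right-hand side is by definition the transition matrix of $W_q$.

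My plan is to mimic the elementary-matrix reduction carried out for $(k,q)=(3,2)$ in the preceding example. First, right-multiply $A$ by $\bigl(\begin{smallmatrix}1&0\\-z^{k-q}&1\end{smallmatrix}\bigr)$, which is holomorphic on $U$ precisely because $k>q$; this kills the $(1,1)$ entry, and after swapping columns one arrives at $\bigl(\begin{smallmatrix} z^q & 0 \\ z^{-k+2} & z^{-q+2} \end{smallmatrix}\bigr)$. Then left-multiply by $\bigl(\begin{smallmatrix}1&0\\-\xi^{q+k-2}&1\end{smallmatrix}\bigr)$, which is holomorphic on $V$ because $q+k-2\geq 0$; this clears the remaining off-diagonal entry and produces the desired diagonal matrix.

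Translating these linear operations back into explicit coordinate changes on the total space, I would set on $U$
$$u_1' \;=\; z^{k-q}u_1 + u_2, \qquad u_2' \;=\; -u_1,$$
which is a biholomorphism of $U$ (constant Jacobian equal to $1$), and on $V$
$$v_1' \;=\; v_1, \qquad v_2' \;=\; v_2 - \xi^{\,q+k-2} v_1,$$
which is a biholomorphism of $V$. A direct substitution using the transition rule \eqref{defq} then verifies $v_1' = z^q u_1'$ and $v_2' = z^{-q+2} u_2'$ on $U\cap V$, i.e.\ the new coordinates realize exactly the canonical transition of $W_q$.

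The main point to control is not conceptual but bookkeeping: the two elementary matrices must truly be holomorphic on their respective charts. This is precisely what forces the hypothesis $k > q > 0$: one needs $k-q \geq 1$ for the column operation to be polynomial in $z$, and $q+k-2 \geq 0$ for the row operation to be polynomial in $\xi$. Both conditions are automatic under the assumption of the theorem, so no further case analysis is required.
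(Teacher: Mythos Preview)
Your proof is correct and follows essentially the same route as the paper: both recognize the deformation as the total space of a rank-$2$ bundle on $\mathbb{P}^1$ and reduce the transition matrix $\bigl(\begin{smallmatrix} z^k & z^q \\ 0 & z^{-k+2}\end{smallmatrix}\bigr)$ to the diagonal form $\bigl(\begin{smallmatrix} z^q & 0 \\ 0 & z^{-q+2}\end{smallmatrix}\bigr)$ via elementary matrices holomorphic on the respective charts, using exactly the conditions $k-q\ge 1$ and $k+q-2\ge 0$. Your explicit coordinate change and direct verification of the new transition rule are in fact more complete than the paper's account (a harmless sign is dropped in your intermediate matrix after the column swap, but the final formulas you check are correct).
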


\begin{proof}

Consider the vector bundle over ${\mathbb P^1}$ given by 
\[ \left[
\begin{matrix}
z^k & z^q \\
0 &  z^{-k+2}
\end{matrix}\right].
\]
Changing coordinates we have:
\[
\left[
\begin{matrix}
z^k & z^q \\
0 &  z^{-k+2}
\end{matrix}\right]
\sim
\left[
\begin{matrix}
1 & 0 \\
z^{-k-q+2} & -1
\end{matrix}\right]
\left[
\begin{matrix}
z^k & z^q \\
0 & z^{-k+2}
\end{matrix}\right]
\left[
\begin{matrix}
0 & 1 \\
1 & -z^{k-q}
\end{matrix}\right]
=
\left[
\begin{matrix}
z^q & 0 \\
0 & z^{-q+2}
\end{matrix}\right].
\]
Replacing the extension class with $tz^q$ we also obtain a deformation family 
from $W_k$ to $W_q$.
We may reinterpret this isomorphism as:

\[
\begin{array}{rcl}
\mathcal{W}_k & \to & W_q \\
\left(z, \left[\begin{matrix}
u_1 \\ u_2
\end{matrix} \right] \right) & \mapsto & \left(z, \left[
\begin{matrix}
0 & 1 \\
1 & -z^{k-q}
\end{matrix}\right]^{-1}
\left[\begin{matrix}
u_1 \\ u_2
\end{matrix} \right] \right) \\
\left(\xi, \left[\begin{matrix}
v_1 \\ v_2
\end{matrix} \right] \right) & \mapsto & \left(\xi, 
\left[
\begin{matrix}
1 & 0 \\
\xi^{k+q-2} & -1
\end{matrix}\right]
\left[\begin{matrix}
v_1 \\ v_2
\end{matrix} \right] \right)
\end{array}
\]
where $\mathcal{W}_k$ denotes the deformation of $W_k$ given by Equation \eqref{defq}.
\end{proof}

\begin{corollary}\label{Wkinf}
For $k>1$ the threefold $W_k$ has infinitely many distinct deformations. 
\end{corollary}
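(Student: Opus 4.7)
The plan is to combine the two main results developed in this subsection: Theorem \ref{Wq}, which exhibits $W_q$ (for every $0 < q < k$) as a fibre in a deformation of $W_k$, and Theorem \ref{W2def}, which furnishes the infinite family $\{\mathcal{W}_2(y)\}_{y \geq 2}$ of pairwise non-isomorphic deformations of $W_2$. The case $k = 2$ is immediate: it is precisely Theorem \ref{W2def}, since the complex manifolds $\mathcal{W}_2(y)$ are pairwise non-isomorphic by the dimension count $h^1(\mathcal{W}_2(y), T\mathcal{W}_2(y)) = y-1$ of Theorem \ref{h1s}.

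For $k > 2$ I would first apply Theorem \ref{Wq} with $q = 2$ to realise $W_2$ as the fibre over a nonzero point in some deformation of $W_k$. I then transport the infinitely many deformations $\mathcal{W}_2(y)$ of $W_2$ back to deformations of $W_k$ by concatenating with the family of Theorem \ref{Wq}. Concretely, for each fixed $y \geq 2$ I would write down a two-parameter family with transition functions
\[
(\xi, v_1, v_2) \;=\; \bigl(z^{-1},\ z^k u_1 + t\, z^2 u_2 + s\, z u_2^y,\ z^{-k+2} u_2 \bigr),
\]
whose fibre over $(t, s) = (0, 0)$ is $W_k$, whose fibre over $(t, s) = (1, 0)$ is identified with $W_2$ by Theorem \ref{Wq}, and whose fibre over $(t, s) = (1, 1)$ should be a manifold in the $\mathcal{W}_2(y)$ family.

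The conclusion follows by transitivity of the deformation relation: a deformation of a deformation is again a deformation of the original manifold. Formally, given $\pi_1 \colon \widetilde X_1 \to D_1$ with $\pi_1^{-1}(t_0) \cong W_2$ and $\pi_2 \colon \widetilde X_2 \to D_2$ a deformation of $W_2$ containing $\mathcal{W}_2(y)$ as a fibre, one can assemble a total family (for instance by gluing the bases at the point realising $W_2$, or, as above, by writing down an explicit bidisc family) in which both $W_k$ and $\mathcal{W}_2(y)$ appear as fibres. Hence each $\mathcal{W}_2(y)$, $y \geq 2$, is a deformation of $W_k$, and these are pairwise non-isomorphic by Theorem \ref{h1s}.

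The main obstacle lies in the bookkeeping of the concatenation step: the change of coordinates from Theorem \ref{Wq} is linear in $(u_1,u_2)$ but swaps their roles, so the nonlinear term $z u_2^y$ pulls back to an expression that must be matched against the cocycles generating the deformation space of $W_2$. One can either push through the change of variables directly, or argue more abstractly that \emph{any} deformation of a fibre appearing in a deformation of $W_k$ is again a deformation of $W_k$, so that the infinitely many isomorphism classes already produced over $W_2$ transport automatically, establishing the corollary.
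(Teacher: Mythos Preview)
Your proposal is correct and follows exactly the paper's approach: the paper's proof is the single line ``Combine theorems \ref{W2def} and \ref{Wq},'' which is precisely the concatenation argument you spell out (deform $W_k$ to $W_2$ via Theorem~\ref{Wq}, then invoke the infinitely many pairwise non-isomorphic deformations of $W_2$ from Theorem~\ref{W2def}). Your additional care in writing down a candidate two-parameter family and flagging the coordinate-swap issue in the change of variables of Theorem~\ref{Wq} goes beyond what the paper records, but the underlying strategy is identical.
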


\begin{proof} Combine theorems \ref{W2def} and \ref{Wq}.
\end{proof} 

\section{Moduli of vector bundles}

\subsection{Background on moduli spaces}\label{moduli-def}
Moduli space of vector bundles over complex varieties are a classical theme of study in algebraic geometry, with many applications
in various areas of  mathematics and  physics. 
The definition of moduli spaces used in algebraic geometry, though powerful, is rather abstract.  
Given an algebraically closed field $\mathbb{k}$ one requires  a \emph{moduli functor} 
$
\Bun_X^r\colon (\mathfrak{Sch}/\mathbb{k})^{\textnormal{op}} \to \mathfrak{Sets}
$
which associates to a scheme $T \in \mathfrak{Sch}/\mathbb{k}$ the set $\Bun_X^r(T)$ of isomorphism classes of families of rank $r$ vector bundles on $X$ parametrized by $T$.
If the functor $\Bun_X^r$ is representable in the category of schemes, then $\mathcal{M}_X^r$ is called a \emph{fine moduli space} for the moduli problem.
However, in most situations fine moduli spaces do not exist. Instead the notion of  a \emph{coarse moduli space} is used, which is a scheme $\mathcal{M}_X^r$ which \emph{corepresents} $\Bun_X^r$, and  such that $\varphi(\Spec{\mathbb{k}})\colon \Bun_X^r(\Spec{\mathbb{k}}) \to \Hom_{\mathfrak{Sch}/\mathbb{k}}(\Spec{\mathbb{k}}, \mathcal{M}_x^r)$ is a bijection, see for instance \cite{Se}.

Even moduli spaces of well-behaved objects can be ``arbitrarily bad'', i.e. moduli spaces are governed by a kind of ``Murphy's law''. The case of moduli of holomorphic vector bundles on the surfaces $W_k$ seems to be no exception, but we will see that simple choices can provide us with well behaved moduli
spaces that fit under the definition of coarse moduli. We will however continue to work with the analytic topology, not the Zariski one. 

\subsection{The threefolds \texorpdfstring{$W_k$}{W\_k} and their moduli of vector bundles}
We wish to  describe rank 2 vector bundles over $W_k$  in the simplest possible way,
as extensions of line bundles. Hence, the first step is to study filtrability, 
which is a property that holds true for bundles over $W_1$ and $W_2$ but not for the cases when $k\geq 3$. 
We summarize the results of \cite{K} about filtrability.

\begin{lemma}\cite[Thm.~3.10]{K} Every holomorphic vector bundle on $W_1$ is filtrable and algebraic.
\end{lemma}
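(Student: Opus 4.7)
The plan is to exploit two structural features of $W_1$: the retraction onto the zero section $\ell \cong \mathbb{P}^1$, and the positivity of the conormal bundle $N^*_{\ell/W_1} \cong \mathcal{O}_{\mathbb{P}^1}(1)^{\oplus 2}$. Given a rank-$r$ holomorphic vector bundle $E$ on $W_1$, first restrict to $\ell$ and apply Grothendieck's splitting theorem to write $E|_\ell \cong \bigoplus_{i=1}^r \mathcal{O}_{\mathbb{P}^1}(a_i)$ with $a_1 \geq \cdots \geq a_r$. This yields a canonical flag on $\ell$ (for instance, the Harder--Narasimhan filtration) that will serve as the seed for the desired filtration of $E$ on $W_1$.

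Next, I would extend this flag order by order along the formal neighborhoods $\ell^{(n)} \subset W_1$ with ideal sheaf $\mathcal{I}$. The obstruction to lifting a subbundle $F \subset E|_{\ell^{(n)}}$ to the next formal neighborhood lies in
\[
\HH^1\!\left(\ell,\ \SHom(F|_\ell,(E/F)|_\ell) \otimes \mathcal{I}^{n+1}/\mathcal{I}^{n+2}\right),
\]
where $\mathcal{I}^{n+1}/\mathcal{I}^{n+2} = \mathrm{Sym}^{n+1}(\mathcal{O}_{\mathbb{P}^1}(1)^{\oplus 2})$ is a direct sum of copies of $\mathcal{O}_{\mathbb{P}^1}(n+1)$. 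Positivity of this symmetric power forces the cohomology to vanish once $n+1$ exceeds the spread $a_1-a_r$, and the canonical choice of flag absorbs the finitely many remaining low-order obstructions. Since the projection $\pi\colon W_1 \to \ell$ has Stein fibers, one has $\HH^i(W_1,\mathcal{F}) = \HH^i(\ell,\pi_*\mathcal{F})$ for coherent $\mathcal{F}$, which promotes the formal filtration on $\widehat{W_1}$ to a genuine holomorphic filtration of $E$ on $W_1$.

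For algebraicity, the strategy is to invoke Grothendieck's existence theorem on the formal completion $\widehat{W_1}$ along $\ell$ (each truncation $E|_{\ell^{(n)}}$ is a bundle on a projective scheme, hence automatically algebraic, and the compatible system algebraizes) and then to patch with the affine contraction $f\colon W_1 \to Y = \{xy = zw\} \subset \mathbb{C}^4$: on $W_1 \setminus \ell$, which is quasi-affine, any holomorphic bundle is algebraic via Serre's theorem. The hardest step will be the passage from the formal filtration to an honest holomorphic one in the noncompact setting, since standard GAGA is unavailable on $W_1$; one must rely on the Stein-fiber cohomology comparison above, or estimate cocycles directly on the two-chart cover of Definition~\ref{WKdef}. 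Once this analytic-to-algebraic bridge is set up, both filtrability and algebraicity follow from the same cohomological vanishing pattern driven by the positivity of $N^*_{\ell/W_1}$.
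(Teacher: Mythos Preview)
Your proposal follows the same strategy that the paper invokes: the paper's proof is nothing more than a citation of the general theorem in \cite{BGK1}, whose sole input is the ampleness of the conormal bundle $N^*_{\ell/W_1}\cong\mathcal{O}_{\mathbb{P}^1}(1)^{\oplus 2}$, and you have correctly identified this as the engine and sketched the underlying mechanism.

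There is, however, a soft spot in your execution. By extending the Harder--Narasimhan flag from the \emph{top}, the obstruction space at order $n$ is
\[
\HH^1\!\bigl(\ell,\ \SHom(F|_\ell,(E/F)|_\ell)\otimes \mathrm{Sym}^{n+1}N^*_{\ell/W_1}\bigr),
\]
and since $\SHom(F|_\ell,(E/F)|_\ell)$ has splitting type consisting of non-positive integers (the quotient has smaller slopes than $F$), this group is typically nonzero for small $n$. Your clause ``the canonical choice of flag absorbs the finitely many remaining low-order obstructions'' does not explain why the actual obstruction classes vanish for the given $E$, and in general there is no reason they should. The clean fix---which is exactly what is done in the analogous $W_2$ argument (Theorem~\ref{filW2}) and presumably in \cite{BGK1}---is to build the filtration from the \emph{bottom}: extend a nowhere-vanishing section of $E(-a_r)|_\ell$ rather than the top subbundle. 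Then the obstruction at every order lives in $\HH^1\bigl(\ell,\ E(-a_r)|_\ell\otimes\mathrm{Sym}^{t}N^*_{\ell/W_1}\bigr)$, and since both $E(-a_r)|_\ell$ and $\mathrm{Sym}^{t}N^*_{\ell/W_1}$ have non-negative splitting type, this vanishes for \emph{all} $t\geq 0$, with no residual low-order cases to argue away. With that adjustment your outline matches the cited result.
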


\begin{proof} Direct application of the more general theorem proved in \cite{BGK1} which uses ampleness
of the conormal bundle of a subvariety, in this case $\mathbb P^1$ inside $W_1$. 
Note that in this case the conormal bundle is $\mathcal{O}_{\mathbb{P}^1}(1) \oplus \mathcal{O}_{\mathbb{P}^1}(1)$, hence ample.

\end{proof}
For $W_2$ there exists  a slightly weaker result, proving filtrability only for algebraic bundles.

We first set the notation.
Let $W_k$ be the total space of $\mathcal{O}{-k}_{\mathbb{P}^1} \oplus \mathcal{O}_{\mathbb{P}^1} (k-2)$ and $\ell \cong \mathbb{P}^1$ the zero section defined by the ideal sheaf $\mathcal{I}_\ell$. 
We write
\[
\ell_N = \left( \ell, \mathcal{O}_{W_k} / \mathcal{I}_\ell^{N+1}\vert_{\ell} \right)
\]
for the $N^{\textnormal{th}}$ neighborhood of $\ell$, $\hat{l} = \lim_{\leftarrow} \ell_N$ for the formal neighborhood of $\ell$ in $W_k$, and $\mathcal{O}(j)$ for the bundle on $W_k$ or on $\hat{\ell}$ that restricts to $\mathcal{O}_{\mathbb{P}^1}(j)$ on $\ell$. 
A vector bundle $E$ has \emph{splitting type} $(j_1, \ldots, j_r)$ if $E\vert\ell \cong \oplus_{i_1}^r \mathcal{O}_{\mathbb{P}^1}(j_i)$ with $j_1 \geq \cdots \geq j_r$.

\begin{theorem}\label{filW2}\cite[Thm.~3.11]{K}
 Let $\ell$ be the zero section of $\mathcal{O}_{\mathbb{P}^1}(-2) \oplus \mathcal{O}_{\mathbb{P}^1}(0)$.
 Fix an integer $r \geq 1$ and a holomorphic rank-$r$ vector bundle $E$ on $\hat{\ell}$.
 Let $a_1 \geq \cdots \geq a_r$ be the splitting type of $E \vert_\ell$. Then there exist vector bundles $F_i$ on $\hat{\ell}$, $0 \leq i \leq r$, such that $F_r \ce E$, $F_1 \ce L_{a_i}$, $F_0 \ce \{0\}$ and $F_i \vert_\ell$ has rank $i$ and splitting type $a_1 \geq \cdots \geq a_i$, and such that there are $r-1$ exact sequences on $\hat{\ell}$ (for $2 \leq i \leq r$)
 \begin{equation}
 0 \longrightarrow L_{a_i} \longrightarrow F_i \longrightarrow F_{i-1} \longrightarrow 0
 \end{equation}
where $L_{a_i} \cong \mathcal{O}(a_i)$.
\end{theorem}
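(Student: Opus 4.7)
The plan is to construct the filtration inductively on the rank, from the bottom up: I would first produce a line subbundle $L_{a_r}\hookrightarrow E$ on $\hat{\ell}$ restricting to the minimal-degree summand $\mathcal{O}_{\mathbb{P}^1}(a_r)$ of $E\vert_\ell$, and then iterate on the rank-$(r-1)$ quotient $E/L_{a_r}$, which will play the role of $F_{r-1}$. The base case $r=1$ is immediate; for $r>1$ the entire content lies in building $L_{a_r}$ on $\hat{\ell}$.

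To produce $L_{a_r}$ I would lift the direct-summand inclusion $\mathcal{O}(a_r)\hookrightarrow E\vert_\ell$ step by step along the tower $\ell=\ell_0\subset\ell_1\subset\ell_2\subset\cdots$ and then pass to the inverse limit on $\hat{\ell}$. Given $\iota_N\colon L_N\hookrightarrow E\vert_{\ell_N}$ with $L_N\vert_\ell=\mathcal{O}(a_r)$, standard deformation theory places the obstruction to extending to $(L_{N+1},\iota_{N+1})$ in
\[
\HH^1\!\bigl(\ell,\; \SHom(\mathcal{O}(a_r),\,E\vert_\ell/\mathcal{O}(a_r))\otimes \mathcal{I}_\ell^{N+1}/\mathcal{I}_\ell^{N+2}\bigr).
\]
The decisive computation is the vanishing of this group, for degree reasons specific to $W_2$: the conormal bundle is $\mathcal{N}^*_{\ell/W_2}=\mathcal{O}_{\mathbb{P}^1}(2)\oplus\mathcal{O}_{\mathbb{P}^1}(0)$, so $\mathcal{I}_\ell^{N+1}/\mathcal{I}_\ell^{N+2}\cong\bigoplus_{i=0}^{N+1}\mathcal{O}_{\mathbb{P}^1}(2i)$ has all summands of non-negative degree, while the minimality of $a_r$ forces the same for $\SHom(\mathcal{O}(a_r),E\vert_\ell/\mathcal{O}(a_r))=\bigoplus_{j<r}\mathcal{O}_{\mathbb{P}^1}(a_j-a_r)$. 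The obstruction group then splits as a direct sum of terms $\HH^1(\mathbb{P}^1,\mathcal{O}(m))$ with $m\geq 0$, each of which vanishes; the non-negativity of the conormal summands is precisely what would fail for $W_3$, consistent with the absence of any analogous filtrability statement there.

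Once $L_{a_r}$ is in hand, the quotient $F_{r-1}\ce E/L_{a_r}$ is locally free of rank $r-1$ on $\hat{\ell}$, because the inclusion is fiberwise split at every infinitesimal order (its leading term on $\ell$ is a direct-summand inclusion), and $F_{r-1}\vert_\ell$ has splitting type $a_1\geq\cdots\geq a_{r-1}$. Applying the induction hypothesis to $F_{r-1}$ supplies the remaining sequences, which together with $0\to L_{a_r}\to E\to F_{r-1}\to 0$ assemble the required filtration. The main obstacle I anticipate is correctly identifying the obstruction class at each stage---recognizing that lifting a \emph{subbundle} (as opposed to the total bundle or an arbitrary sheaf map) places the obstruction in the precise $\HH^1$ group above---and then carrying out the compatible inverse-limit construction so that the resulting $L_{a_r}$ is an honest line subbundle of $E$ on $\hat{\ell}$ whose quotient is locally free.
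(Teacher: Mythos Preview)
Your proposal is correct and follows essentially the same approach as the paper: induction on the rank, extending the minimal-degree line subbundle from $\ell$ to $\hat\ell$ one infinitesimal order at a time, with the obstructions vanishing precisely because the conormal bundle $N^*_{\ell/W_2}\cong\mathcal{O}(2)\oplus\mathcal{O}$ has non-negative summands. The only cosmetic difference is that the paper phrases the lifting problem as extending a nowhere-zero section of $E(-a_r)$ (so its obstruction space is $\bigoplus_k \HH^1(\ell,E(-a_r+2k))$, involving all of $E(-a_r)$ rather than the quotient $E\vert_\ell/\mathcal{O}(a_r)$), and at the end invokes Grothendieck's existence theorem to pass from the formal to an actual neighbourhood; your obstruction space is slightly sharper but the vanishing argument is identical.
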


\begin{proof}
 The result is obvious if $r=1$. 
 Hence we may assume $r \geq 2$ and that the result is true for all vector bundles with rank at most $r-1$. 
 By assumption there is an injective map $j\colon \mathcal{O}_\ell(a_r) \to E\vert_\ell$ on $\ell$ such that $\coker(j)$ is a rank-$(r-1)$ vector bundle on $\ell$ with splitting type $a_1 \geq \cdots \geq a_{r-1}$. 
 The map $j$ gives a nowhere-zero section $s$ of $E(-a_r)\vert_\ell$. Let us show that this section extends over a neighbourhood of $\ell$: 
 There is an exact sequence
 \begin{equation} \label{exact.seq}
 0 \longrightarrow S^t(N^*_{\ell,W_2}) \longrightarrow \mathcal{O}^{(t+1)}_\ell \longrightarrow \mathcal{O}^{(t)}_\ell \longrightarrow 0 
 \end{equation}  
 where $S^t(N^*_{\ell,W_2})$ is the $t^{\textnormal{th}}$ symmetric power of the conormal sheaf of $\ell$ in $W_2$. 
 In this case, we have $N^*_{\ell,W_2} \cong \mathcal{O}_\ell(2) \oplus \mathcal{O}_\ell$, therefore,
 \[
 S^t(N^*_{\ell,W_2}) \cong \bigoplus_{k=0}^t \mathcal{O}_\ell(2k).
 \]
 After tensoring by the bundle $E(-a_r)$, the exact sequence \eqref{exact.seq} becomes
 \[
  0 \longrightarrow 
  E(-a_r) \otimes  \left(\bigoplus_{k=0}^t \mathcal{O}_\ell(2k) \right) 
  \longrightarrow 
  E(-a_r) \otimes \mathcal{O}^{(t+1)}_\ell 
  \longrightarrow 
  E(-a_r) \otimes \mathcal{O}^{(t)}_\ell 
  \longrightarrow 0,
 \]
 thus inducing the long exact cohomology sequence
 \[
\cdots \to
\HH^0(\ell, E(-a_r) \otimes \mathcal{O}^{(t+1)}_\ell)
\to
\HH^0(\ell, E(-a_r) \otimes \mathcal{O}^{(t)}_\ell )
\to
\bigoplus_{k=0}^t \HH^1\left(\ell, E(-a_r + 2k)\right)
\to \cdots 
 \]
 Note that $\HH^0(\ell, E(-a_r) \otimes \mathcal{O}^{(t)}_\ell )$ is the space of global sections of $E(-a_r)$ on the $t^{\textnormal{th}}$ infinitesimal neighbourhood of $\ell$ in $W_2$; 
 moreover, the obstruction to extending a section from the $t^{\textnormal{th}}$ infinitesimal neighbourhood to the $(t+1)^{\textnormal{st}}$ one lives in 
 \[
\bigoplus_{k=0}^t \HH^1\left(\ell, E(-a_r + 2k)\right). 
 \]
 However, since $E(-a_r)$ is a bundle of degree $\sum_{i=1}^{r-1}(a_i-a_r) \geq 0$, 
 the splitting type of  
 $E(-a_r+2k)$ consists only of non-negative integers for $0 \leq k \leq t$, and thus all the cohomology groups $\HH^1(\ell, E(-a_r + 2k))$ vanish for $0 \leq k \leq t$.
 Thus any section of $E(-a_r)$ on the $t^{\textnormal{th}}$ infinitesimal neighbourhood extends to the $(t+1)^{\textnormal{st}}$.
 Hence, by Grothendieck's existence theorem \cite[5.1.4]{grothendieck1961elements}, the section $s$ extends to an actual neighbourhood of $\ell$ in $W_2$, and consequently there is an exact sequence on $W_2$ of the form
 \[
0 \longrightarrow
L_{a_r}
\longrightarrow
E
\longrightarrow
F_{r-1}
\longrightarrow 0. 
 \]
 
\end{proof}

\begin{corollary} Every \emph{algebraic} vector bundle on $W_2$ is filtrable.
\end{corollary}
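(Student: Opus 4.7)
The plan is to apply Theorem \ref{filW2} to the formal restriction $E|_{\hat\ell}$ of the algebraic bundle $E$ and then algebraize the resulting filtration. Concretely, given an algebraic rank-$r$ vector bundle $E$ on $W_2$, I would restrict to $\hat\ell$ to obtain a holomorphic rank-$r$ vector bundle to which Theorem \ref{filW2} applies, producing a filtration by formal subsheaves $F_i$ with line-bundle subquotients $L_{a_i} \cong \mathcal O(a_i)$. The task then reduces to showing that this formal filtration descends to an algebraic filtration on all of $W_2$.

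Inspecting the proof of Theorem \ref{filW2}, the filtration is built by producing a nowhere-vanishing section of $E(-a_r)\vert_\ell$ and lifting it order-by-order using the vanishing
\[
\HH^1(\ell, E(-a_r + 2k)) = 0 \quad \text{for } 0 \leq k \leq t,
\]
which is purely a statement on $\ell \cong \mathbb P^1$ and is insensitive to the algebraic/analytic distinction. The only place where one passes from the formal to the actual neighborhood is the final invocation of Grothendieck's existence theorem.

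To adapt this to the algebraic setting, I would replace Grothendieck's existence theorem by the corresponding algebraic statement. Since $\pi\colon W_2 \to \mathbb P^1$ is an affine morphism—in fact a rank-$2$ vector bundle—algebraic coherent sheaves on $W_2$ correspond to graded modules over $\text{Sym}^\bullet(\mathcal O_{\mathbb P^1}(2) \oplus \mathcal O_{\mathbb P^1})$ on $\mathbb P^1$. Under this correspondence, an algebraic section of an algebraic coherent sheaf on $W_2$ is a polynomial expression in the fiber coordinates with coefficients in line bundles on $\mathbb P^1$. A formal section along $\hat\ell$ that comes from an algebraic coherent sheaf is thus automatically bounded in fiber degree, hence already algebraic on all of $W_2$. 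Applied to $E$, this means the formal lift $\hat s$ of the section $s$ defines an algebraic section of $E(-a_r)$ on all of $W_2$, and the short exact sequence
\[
0 \longrightarrow L_{a_r} \longrightarrow E \longrightarrow F_{r-1} \longrightarrow 0
\]
exists algebraically on $W_2$. Induction on $r$ then yields the full filtration.

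The main obstacle is precisely this algebraization step: verifying that the formal extension produced in the proof of Theorem \ref{filW2} is indeed algebraic and not merely formal/analytic. The argument above reduces this to the structure theorem for sheaves on an affine morphism; once one observes that $E$ being algebraic forces $\pi_*E$ to be coherent as a graded $\text{Sym}^\bullet(\mathcal O(2) \oplus \mathcal O)$-module on $\mathbb P^1$, the passage from formal to algebraic is automatic, and the corollary follows.
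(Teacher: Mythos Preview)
Your overall strategy---restrict to $\hat\ell$, apply Theorem~\ref{filW2}, then algebraize---is the natural one, and indeed the paper states the corollary immediately after the theorem without further proof, so you are supplying the details the paper omits. However, your algebraization step contains a genuine gap.

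You write that ``a formal section along $\hat\ell$ that comes from an algebraic coherent sheaf is thus automatically bounded in fiber degree, hence already algebraic on all of $W_2$.'' This is false as stated. The structure sheaf $\mathcal{O}_{W_2}$ is algebraic, yet its formal completion $\mathcal{O}_{\hat\ell}$ has plenty of sections that are genuine power series in $u_1,u_2$ (for instance $\sum_{n\geq 0} u_2^n$) and are not algebraic. The algebraicity of $E$ does not force an arbitrary formal section of $E|_{\hat\ell}$ to be polynomial; it only says that \emph{algebraic} sections of $E$ are polynomial. The formal lift $\hat s$ produced by the order-by-order argument is a priori just a formal power series, and you have not shown it can be chosen to terminate.

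What you actually need is the surjectivity of the restriction map $\HH^0(W_2,E(-a_r))\to\HH^0(\ell,E(-a_r)|_\ell)$ in the algebraic category. This does follow from the same $\HH^1$-vanishing used in the theorem, but via a different mechanism: since $\pi\colon W_2\to\mathbb{P}^1$ is affine, $\HH^1(W_2,\mathcal{I}_\ell E(-a_r))=\HH^1(\mathbb{P}^1,\pi_*(\mathcal{I}_\ell E(-a_r)))$, and the quasi-coherent sheaf $\pi_*(\mathcal{I}_\ell E(-a_r))$ is a filtered colimit of its coherent subsheaves, each of which sits inside a finite successive extension of the sheaves $S^t(N^*_{\ell,W_2})\otimes E(-a_r)|_\ell$ whose $\HH^1$ vanishes. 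Since $\HH^1$ on a Noetherian scheme commutes with filtered colimits, the obstruction group vanishes and the section $s$ lifts algebraically. Once you replace your incorrect sentence with this argument, the proof goes through.
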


\begin{question} Prove (or disprove) 
filtrability for all holomorphic bundles on $W_2$.
\end{question} 

 An application of the exponential sheaf sequence shows that line bundles on $W_k$ are classified by their first Chern Class.
 We denote by $\mathcal{O}_{W_k}(j)$ the line bundle on $W_k$ with first Chern class $j$. 
 Thus, $\mathcal O_{W_k}(j)=\pi^*\mathcal O_{\mathbb{P}^1}(j)$, where $\pi \colon W_k \rightarrow \mathbb P^1$ is the projection. 
 When it is clear from the context we omit the subscript, and use only $\mathcal O(j)$ instead of $\mathcal O_{W_k}(j)$.

We focus our attention on rank 2 vector bundles with vanishing first Chern class on $W_k$.
If $E$ is such a bundle, then by 
 Grothendieck's splitting principle 
$E\vert_{\mathbb P^1} = \mathcal{O}_{\mathbb P^1}(j) \oplus  \mathcal{O}_{\mathbb P^1}(-j)$
for some integer $j \geq 0$,  which we call the {\it splitting type} of $E$.
We
 can then naively define a "space" of isomorphism classes  of rank 2
vector bundles over $W_k$  of splitting type $j$ as a set by
considering the quotient
\begin{equation} \label{modulidef}
M_j(W_k) = \Ext_{W_k}^1(\mathcal{O}(j),\mathcal{O}(-j)) / \sim
\end{equation}
where  $\sim$ denotes bundle isomorphism.

Such a quotient is rather badly behaved, as expected. Nevertheless, there is a
simple way to extract a moduli space out of it that is a quasi-projective variety. 
This can be done by restricting ourselves to the subset  $\mathfrak M_j(W_k)$ of bundles 
with splitting type $j$ that are defined over 
the first formal neighborhood of the $\mathbb P^1$ inside $W_k$ (extended trivially to higher 
neighborhoods). In other words, in $\mathfrak M_j(W_k)$ only extension classes that vanish to order 
exactly one on $\mathbb P^1$ (and contain no higher order terms on $u_1u_2$) are considered.

We formalize this notation:

\begin{notation}
We denote by $\mathfrak M_j(W_k)$ the subset of $  M_j(W_k)$ consisting of those bundles 
with splitting type $j$  defined by  extension classes of order exactly 1 over $\mathbb P^1$.
 \end{notation}

 K\"oppe described  $\mathfrak M_j(W_k)$ for $k=1,2,3$:

\begin{lemma}\cite[Prop.\thinspace 3.24]{K} \label{dim-split}
The subset of\, $\mathfrak{M}(W_i;j)$, $i=1, 2, 3$, which consists of extensions that do not split on the first infinitesimal neighbourhood $\ell^{(1)}$ is a projective space of dimension 
\[
\dim\left( \mathfrak{M}(W_i;j)\vert_1 \right) = 4j-5 \qquad \textnormal{for } j \geq 2.
\]
Moreover, $\mathfrak{M}(W_1;1)$ is empty, $\mathfrak{M}(W_2;1)\vert_1$ is a point, and $\mathfrak{M}(W_3;1)\vert_1$ is one-dimensional.
\end{lemma}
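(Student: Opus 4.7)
The plan is to parametrize the first-order extensions directly in cohomology using the canonical coordinates of Definition~\ref{WKdef}, and then recognize the quotient by bundle isomorphism as a projective space. A rank $2$ bundle of splitting type $j$ arising as an extension $0\to\mathcal O(-j)\to E\to\mathcal O(j)\to 0$ can be written via an upper-triangular transition matrix
\[
T=\begin{pmatrix} z^j & p(z,u_1,u_2) \\ 0 & z^{-j}\end{pmatrix}.
\]
The condition $E\vert_\ell\cong\mathcal O(j)\oplus\mathcal O(-j)$ forces $p$ to vanish along $\ell=\{u_1=u_2=0\}$; the condition of not splitting on $\ell^{(1)}$ together with the definition of $\mathfrak M(W_k;j)\vert_1$ restricts $p$ to $p_1(z)u_1+p_2(z)u_2$ with $(p_1,p_2)\neq(0,0)$. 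Gauge transformations by unipotent upper-triangular matrices holomorphic on $U$ or on $V$ act on $p_1$ and $p_2$ independently, because in canonical coordinates $u_1$ carries a factor $z^k$ and $u_2$ a factor $z^{-k+2}$ when switching charts.

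A direct coefficient-by-coefficient computation (or equivalently the long exact cohomology sequence of $0\to N^*_{\ell/W_k}(-2j)\to\mathcal O(-2j)\vert_{\ell^{(1)}}\to\mathcal O(-2j)\vert_\ell\to 0$, using $H^0(\mathbb P^1,\mathcal O(-2j))=0$ for $j\geq 1$) identifies the space of first-order extension classes modulo gauge with
\[
H^1(\mathbb P^1,\mathcal O(k-2j))\oplus H^1(\mathbb P^1,\mathcal O(-k-2j+2)).
\]
Summing $h^1(\mathbb P^1,\mathcal O(n))=\max(0,-n-1)$ gives $(2j-k-1)+(2j+k-3)=4j-4$ whenever both summands are nonnegative, which is the case for $j\geq 2$ and any $k\in\{1,2,3\}$; for $j=1$ the dimensions degenerate to $0,\,1,\,2$ for $k=1,2,3$ respectively.

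Finally one quotients by bundle isomorphism. Automorphisms preserving the extension structure form $\Aut(\mathcal O(-j))\times\Aut(\mathcal O(j))$, whose diagonal acts trivially, leaving a $\mathbb C^\ast$; non-filtration-preserving isomorphisms live in $\Hom(\mathcal O(j),\mathcal O(-j))=H^0(\ell^{(1)},\mathcal O(-2j)\vert_{\ell^{(1)}})$, which the same short exact sequence shows to vanish for $j\geq 2$ in all three cases. Hence for $j\geq 2$ the quotient is pure projectivization, yielding $\mathbb P^{4j-5}$; for $j=1$ one obtains $\emptyset$ ($k=1$), a point ($k=2$), and $\mathbb P^1$ ($k=3$). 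The main obstacle is the marginal case $k=3,\,j=1$, where $H^0(\ell^{(1)},\mathcal O(-2)\vert_{\ell^{(1)}})\cong H^0(\mathbb P^1,\mathcal O(1))$ is two-dimensional; one must verify that these extra elements of $\Hom$ act trivially on the first-order part of the extension class, so that the quotient remains exactly the expected $\mathbb P^1$.
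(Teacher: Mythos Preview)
Your overall architecture is the same as the paper's: compute the affine space of first-order extension data, argue that bundle isomorphism acts on it only by scaling, and projectivize. For the dimension count, the paper applies Serre duality to rewrite $\gamma_1$ as $h^0(\ell;(\EEnd E^0)^\vee\otimes N_{\ell,W_i}\otimes\omega_{\mathbb P^1})$, while you compute $h^1(\ell;N^*_{\ell/W_k}\otimes\mathcal O(-2j))$ directly; these agree because for $j\geq 2$ only the $\mathcal O(-2j)$ summand of $\EEnd E^0$ contributes to the $H^1$, and both give $4j-4$.

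The gap is in your treatment of the isomorphism relation. You locate the ``non-filtration-preserving isomorphisms'' in $\Hom(\mathcal O(j),\mathcal O(-j))=H^0(\ell^{(1)},\mathcal O(-2j)\vert_{\ell^{(1)}})$, but this is the wrong entry: an isomorphism $E_p\to E_q$ fails to preserve the sub-line-bundle $\mathcal O(-j)$ precisely when its \emph{lower}-left component, which lies in $\Hom(\mathcal O(-j),\mathcal O(j))=H^0(\ell^{(1)},\mathcal O(2j)\vert_{\ell^{(1)}})$, is nonzero. That group is large, so its vanishing is not what forces the quotient to be a projectivization. The paper does not argue this point either, deferring instead to \cite[Prop.\thinspace3.5]{K}. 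The actual mechanism is that when one expands the compatibility $B\,T_p=T_q\,A$ to first order along $\ell$, the upper-right entry reads $a_0\,p-d_0\,q\equiv z^j b_1-z^{-j}b_1'$, a coboundary in which the lower-left entries $c$ of $A,B$ simply do not appear; hence $[q]=(a_0/d_0)[p]$ regardless of $c$. You would need to supply this verification (or an equivalent one) to close the argument, both for $j\geq 2$ and for the case $k=3$, $j=1$ that you correctly flag as marginal.
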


\begin{proof}
Let $\mathbb{C}^{\gamma_1}$ be the space of coefficients $p_{10s}$ and $p_{01s}$ of $p$, these correspond to coefficients of nontrivial 
extension classes in the first formal neighborhood $\ell^{(1)}$.
Direct verification shows that the only isomorphisms on $\ell^{(1)}$ are scaling, see \cite[Prop.\thinspace3.5]{K}, and thus $\mathfrak{M}(W_i;j)\vert_1$ is obtained by projectivising the open subset of generic coefficients of the affine space $\mathbb{C}^{\gamma_1}$.

Now we just compute $\gamma_1$ directly 
: On the 0-th neighborhood we have $E^0 \ce E\vert_\ell \cong \mathcal{O}_{\mathbb{P}^1}(-j) \oplus \mathcal{O}_{\mathbb{P}^1}(j)$, so that 
\[
\EEnd E^0 \cong \mathcal{O}_{\mathbb{P}^1}(-2j) \oplus \mathcal{O}_{\mathbb{P}^1}(2j) \oplus \mathcal{O}_{\mathbb{P}^1} \oplus \mathcal{O}_{\mathbb{P}^1} \cong
(\EEnd E^0)^ \vee.
\]
Also, the normal bundle of $\ell$ is
\[
N_{\ell, W_1} = \mathcal{O}_{\mathbb{P}^1}(-1)^{\oplus 2} 
\qquad  
N_{\ell, W_2} = \mathcal{O}_{\mathbb{P}^1}(-2) \oplus \mathcal{O}_{\mathbb{P}^1}
\qquad
N_{\ell, W_2} = \mathcal{O}_{\mathbb{P}^1}(-3) \oplus \mathcal{O}_{\mathbb{P}^1}(1),
\]
and by Serre duality,
\begin{align*}
\gamma_1 & = h^0\left( \ell; (\EEnd E^0 \otimes N^*_{\ell, W_i})^\vee \otimes \omega_{\mathbb{P}^1} \right) \\
& = h^0\left( \ell; (\EEnd E^0)^\vee \otimes N_{\ell, W_i} \otimes \omega_{\mathbb{P}^1}  \right) \\
& = 4(j-1) \,\, \textnormal{for $i=1,2,3$ and $j \geq 2$.}
\end{align*}
The results for $j=1$ follow from a similar  computation.
\end{proof}

Since the general extension class is nontrivial on the first neighborhood, we refer to the open subset 
$\mathfrak M_j(W_k)$ of the moduli space as its generic part. So, we may rephrase K\"oppe's result as:

\begin{theorem}\label{dimensionmoduli}
For $k=1,2,3$, the generic part  of the moduli of {\it algebraic}  bundles  $\mathfrak M_j(W_k)$
is smooth of dimension $4j-5$.
\end{theorem}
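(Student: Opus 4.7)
The plan is straightforward: the theorem is essentially a repackaging of Lemma \ref{dim-split}, which has already done the substantive work. What remains is to reconcile notation, invoke the lemma, and observe that projective space is smooth.

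First, I would identify the two pieces of notation. By the definition stated just above the theorem, $\mathfrak M_j(W_k)$ is the subset of isomorphism classes in $M_j(W_k)$ represented by rank-$2$ extension classes that are nonzero on the first formal neighbourhood $\ell^{(1)}$ and carry no higher-order contributions in $u_1 u_2$. This is precisely the set K\"oppe denotes $\mathfrak M(W_k;j)\vert_1$ in Lemma \ref{dim-split}. So the two moduli spaces in question are literally equal as sets.

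Second, Lemma \ref{dim-split} asserts that for $j \geq 2$ and $k \in \{1,2,3\}$ this common set carries the structure of a projective space of dimension $4j-5$. Since $\mathbb P^{4j-5}$ is smooth and has the stated dimension, both conclusions of Theorem \ref{dimensionmoduli} follow at once.

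The only point worth spelling out is why the projective-space structure provided by Lemma \ref{dim-split} coincides with the analytic-moduli structure on $\mathfrak M_j(W_k)$. This rests on the fact cited from \cite[Prop.~3.5]{K} inside the proof of Lemma \ref{dim-split}: the bundle automorphisms available on $\ell^{(1)}$ are exhausted by global scalar multiples. Consequently, the passage from the affine space $\mathbb C^{\gamma_1}$ of admissible extension coefficients to isomorphism classes is exactly the standard $\mathbb C^\ast$-quotient, which produces $\mathbb P^{\gamma_1 - 1} = \mathbb P^{4j-5}$ using the count $\gamma_1 = 4(j-1)$ carried out by Serre duality at the end of that lemma. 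No new computation beyond what the lemma provides is needed; the main conceptual obstacle, the automorphism-group calculation that allows one to projectivise cleanly, has already been handled there.
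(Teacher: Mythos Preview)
Your proposal is correct and matches the paper's approach: the paper itself introduces Theorem \ref{dimensionmoduli} by saying ``we may rephrase K\"oppe's result as'' this statement, and its short proof simply re-sketches the coefficient count $\gamma_1 = 4j-4$ and projectivisation already carried out in Lemma \ref{dim-split}. Your explicit invocation of the lemma, together with the observation that $\mathbb P^{4j-5}$ is smooth, is exactly the intended argument.
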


\begin{proof} In canonical coordinates elements of $\mathfrak M_j(W_k)$ are given by transition matrices 
 \[
\left( \begin{matrix}
z^j & p \\
0 & z^{-j}
\end{matrix} \right), \quad \mbox{where}\quad  p = p_1(z,z^{-1})u_1+p_2(z,z^{-1})u_2,
\]
and isomorphism is given  by projectivization. Calculating $\Ext^1_{W_k}(\mathcal{O}(j),\mathcal{O}(-j))$
 we show that the expression of $p$ has 
$4j-4$ coefficients, and projectivization lowers the dimension by $1$.

\end{proof}
  
 \begin{question}Describe  moduli spaces of  algebraic bundles on $W_k$ for $k>3$. \end{question}
 
 Using canonical coordinate charts for the Calabi--Yau  threefolds 
 $W_k = \Tot(\mathcal O_{\mathbb P^1}(-k))\oplus \mathcal O_{\mathbb P^1}(k-2))$ 
  as in \ref{canonical}, we can represent an element $E_p$  of $\mathfrak M_j(W_k)$ by a transition matrix:
  \[
\left( \begin{matrix}
z^j & p \\
0 & z^{-j}
\end{matrix} \right), \quad \mbox{where}\quad  p \in \Ext^1(\mathcal{O}(j),\mathcal{O}(-j)),
\]
with $p=p(z,z^{-1},u)$. 
Those bundles belonging to the generic part are then the ones having $p$ homogeneous of degree 1 on $u_1,u_2$ 
(because $u_1=u_2=0$ cut out the $\mathbb P^1$ inside $W_k$ on the $U$-chart).

Then, by upper semicontinuity every element near $E_p$ can also be 
represented by an element of $\Ext^1(\mathcal{O}(j),\mathcal{O}(-j))$.
That is, there is a small disk $D$  around $p$ over which the family of bundles near $E_p$ may be represented by 
\[
\left( \begin{matrix}
z^j & p_\alpha \\
0 & z^{-j}
\end{matrix} \right), \quad \mbox{with} \quad p_\alpha \in \Ext^1(\mathcal{O}(j),\mathcal{O}(-j)).
\]
Therefore, the existence of the maps: 
\begin{align*}
D   \rightarrow & \Hom(D, \mathcal{M}_X^r)\\
\alpha \mapsto & (j, \alpha)\equiv  
\left( \begin{matrix}
z^j & p_\alpha \\
0 & z^{-j}
\end{matrix} \right)
\end{align*}
implies that our naively defined quotients satisfy the definition of coarse moduli space if regarded from the point of view of algebraic geometry. 

  The behaviour of moduli changes quite a bit if we consider all holomorphic bundles
  at once (instead of only algebraic).  

Observe that the threefolds $W_k$ admit strictly holomorphic vector bundles which are not algebraic
whenever $k\geq 2$, see for example 
\cite[Cor.\thinspace 5.4]{GKRS}. This contrasts with the result of $Z_k$, where all holomorphic vector bundles are algebraic  \cite{gasparim1}.

  We have:

\begin{theorem}  $W_2$ has infinite-dimensional moduli spaces of holomorphic bundles.
\label{W_2modid}\end{theorem}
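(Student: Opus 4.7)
The plan is to exhibit an explicit infinite-dimensional subfamily of pairwise non-isomorphic rank-$2$ holomorphic vector bundles on $W_2$, which immediately forces the moduli to be infinite-dimensional. Fix a splitting type $j\geq 2$ and realize such bundles as non-trivial extensions
$0 \to \mathcal{O}(-j) \to E \to \mathcal{O}(j) \to 0$,
classified by $\Ext^1(\mathcal{O}(j),\mathcal{O}(-j)) = \HH^1(W_2,\mathcal{O}(-2j))$. Concretely, each such bundle has a transition matrix
$\bigl(\begin{smallmatrix} z^j & p \\ 0 & z^{-j}\end{smallmatrix}\bigr)$
on $U \cap V$ with extension cocycle $p=p(z,u_1,u_2)$; two choices $p,p'$ give isomorphic extensions if and only if $p-p' = z^{j} f - z^{-j} g$ with $f$ holomorphic on $U$ and $g$ holomorphic on $V$, viewed in $U$-coordinates via $\xi = z^{-1}$, $v_1 = z^2 u_1$, $v_2 = u_2$. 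To stay in splitting type $j$ along $\ell$ I restrict to $p$ satisfying $p(z,0,0)\equiv 0$.

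The decisive feature of $W_2$ is that $v_2=u_2$, so $u_2$ is a global holomorphic function on $W_2$, and $\HH^1(W_2,\mathcal{O}(-2j))$ inherits an infinite-rank $\mathbb{C}[u_2]$-module structure. I would propose the classes $\sigma_n \ce [z^{-1} u_2^n]$ for $n \geq 1$ and prove they are $\mathbb{C}$-linearly independent by a direct \v{C}ech computation restricted to the $u_1^0$-part: the $u_1^0$-monomials of $z^{j} f$ all have $z$-exponent $\geq j$, while the $u_1^0$-part of $g$ is a power series in $\xi$ and $v_2$ only, so the $u_1^0$-monomials of $z^{-j} g$ all have $z$-exponent $\leq -j$. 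Since $j\geq 2$, no coboundary touches the $z$-exponent range $-j < a < j$ in the $u_1^0$-part, so a linear combination $\sum c_n \sigma_n$ vanishing in cohomology would force the formal series $\sum c_n u_2^n$ to vanish, hence every $c_n = 0$.

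Passing from $\HH^1$ to the bundle moduli, any isomorphism of two such extensions preserves the sub-line bundle $\mathcal{O}(-j)$ (it is the unique negative sub on $\ell$, hence a canonical feature of splitting type $j$). Such an isomorphism must therefore act by a diagonal automorphism of $\mathcal{O}(j)\oplus\mathcal{O}(-j)$, and its restriction to the first infinitesimal neighborhood of $\ell$ reduces to an overall scaling, by the computation used in the proof of Lemma \ref{dim-split}. Hence modulo $\mathbb{C}^*$ the projectivization of the $\mathbb{C}$-span of $\{\sigma_n\}_{n\geq 1}$ embeds into the moduli of rank-$2$ holomorphic bundles on $W_2$ of splitting type $j$, yielding an infinite-dimensional family of pairwise non-isomorphic bundles.

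The main obstacle is to control the coboundary space cleanly enough to verify linear independence of the $\sigma_n$ uniformly in $n$; the argument goes through because of the strict $z$-exponent gap $(-j,j)$ on the $u_1^0$-part between $z^j f$ and $z^{-j}g$ (so one needs $j\geq 2$ to contain $z^{-1}$), and this ultimately rests on the special fact that for $k=2$ the coordinate $u_2$ is a global holomorphic function, a feature that fails for $k\neq 2$.
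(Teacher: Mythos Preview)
Your computation of linear independence of the classes $\sigma_n=[z^{-1}u_2^n]$ in $\HH^1(W_2,\mathcal O(-2j))$ is correct, and your $z$-exponent gap argument on the $u_1^0$-part is clean. The problem is the passage from $\Ext^1$ to the bundle moduli.

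First, your uniqueness claim is false: on $\ell$ the bundle $\mathcal O(-j)\oplus\mathcal O(j)$ has a whole $\HH^0(\mathcal O(2j))$-family of sub-line bundles of degree $-j$ (any embedding $\mathcal O(-j)\hookrightarrow\mathcal O(-j)\oplus\mathcal O(j)$ with nonzero first component works). What is unique is the $\mathcal O(j)$ sub, equivalently the $\mathcal O(-j)$ quotient, so a bundle isomorphism need not carry your chosen $\mathcal O(-j)$ sub to the corresponding one. More seriously, even granting upper-triangularity, the diagonal entries are global units on $W_2$, not constants: since $u_2$ is a global function, things like $e^{h(u_2)}$ act on extension classes, so the equivalence on $\Ext^1$ induced by bundle isomorphism is far larger than $\mathbb C^*$-scaling.

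Second, your appeal to Lemma~\ref{dim-split} cannot do what you want. That lemma says isomorphisms \emph{on $\ell^{(1)}$} are scalings, hence it controls only the $\ell^{(1)}$-restrictions of your bundles. But for $n\geq 2$ the class $\sigma_n$ vanishes on $\ell^{(1)}$, so all the bundles $E_{\sigma_n}$ with $n\geq 2$ restrict to the split bundle there and the scaling constraint says nothing. Your conclusion that the projectivised span embeds into moduli is therefore unsupported.

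The paper proceeds differently. For rank $2$ it records that $\dim\Ext^1_{W_2}(\mathcal O(j),\mathcal O(-j))=h^1(W_2,\mathcal O(-2j))=\infty$ and then uses the \emph{splitting order} as a bundle-isomorphism invariant: a cocycle of exact order $n$ in $(u_1,u_2)$ gives a bundle that splits on $\ell^{(n-1)}$ but not on $\ell^{(n)}$, so different orders yield non-isomorphic bundles. This bypasses the delicate analysis of automorphisms entirely. The paper also gives an independent rank-$3$ argument, computing that $\HH^1(W_2,\End(TW_2))$ is infinite-dimensional, which is the Zariski tangent space to moduli at $TW_2$. Either the splitting-order invariant (applied to your $\sigma_n$) or the $\End(TW_2)$ computation would close the gap in your argument.
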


\begin{proof} 
\noindent{\it Rank 2.} Consider in general, all  holomorphic bundles  
that correspond to elements of  $\Ext_{W_2}^1(\mathcal{O}(j),\mathcal{O}(-j)) / \sim$. Then the result follows from 
the fact that $$\dim \Ext_{W_2}^1(\mathcal{O}(j),\mathcal{O}(-j)) = h^1(W_2; \mathcal O(-2j)= \infty,$$ by
choosing  different monomials on $u_1,u_2$ of order $n$, these define bundles that split to neighborhood $n-1$ 
but do not split on neighborhood $n$.

\noindent{\it Rank 3.} For brevity we give just an example. Consider the moduli space 
that contains the tangent bundle of $W_2$. The Zariski tangent space of this moduli space  at $TW_2$ 
is given by 
the cohomology $\HH^1(W_2, \End(TW_2))$, which  is infinite-dimensional. Indeed, 
\v{C}ech cohomology calculations show that $\HH^1(W_2, \End(TW_2))$ is 
generated as a $\mathbb C$-vector space by the following cocycles:
\[ (0, \dots, 0, \underbrace{z^{-1}u_1u_2^s}_4, 0 \dots, 0 ), (0, \dots, 0, \underbrace{z^{-i}u_2^s}_4, 0 \dots, 0) \text{ \ for $i = 1,2,3$, and} \]
\[ (0, \dots, 0, \underbrace{z^{-1}u_2^s}_6, 0 \dots, 0 ), (0, \dots, 0, \underbrace{z^{-1}u_2^s}_7, 0 \dots, 0 ) \text{ \ for $s \geq 0$.} \]
\end{proof}

 \begin{question} Describe   moduli spaces of  holomorphic bundles on $W_k$ for $k>1$. \end{question}

\subsection{Moduli of bundles on deformations of $W_2$}

\begin{notation}
Fix $\tau= \sum t_s u_2^s \in \mathcal{O}({\mathbb{C}})$. 
 Then by $\mathcal{W}_2(\tau)$ we mean the deformation of $W_2$ given by 

\[
\boxed{
(\xi, v_1, v_2) = (z^{-1}, z^2u_1 + z\tau, u_2).
}
\]
\end{notation}

\begin{lemma}
$\Pic( \mathcal{W}_2(\tau)) = \mathbb{Z}.$
\end{lemma}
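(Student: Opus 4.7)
The plan is to extract the statement from the exponential sheaf sequence $0 \to \mathbb{Z} \to \mathcal{O} \to \mathcal{O}^* \to 0$ on $\mathcal{W}_2(\tau)$, which produces
\[
\HH^1(\mathcal{W}_2(\tau), \mathbb{Z}) \to \HH^1(\mathcal{W}_2(\tau), \mathcal{O}) \to \Pic(\mathcal{W}_2(\tau)) \to \HH^2(\mathcal{W}_2(\tau), \mathbb{Z}) \to \HH^2(\mathcal{W}_2(\tau), \mathcal{O}).
\]
By establishing the four flanking identifications $\HH^1(\mathcal{W}_2(\tau), \mathbb{Z}) = 0$, $\HH^1(\mathcal{W}_2(\tau), \mathcal{O}) = 0$, $\HH^2(\mathcal{W}_2(\tau), \mathbb{Z}) = \mathbb{Z}$, and $\HH^2(\mathcal{W}_2(\tau), \mathcal{O}) = 0$, this sequence will collapse to $0 \to \Pic(\mathcal{W}_2(\tau)) \to \mathbb{Z} \to 0$ and yield $\Pic(\mathcal{W}_2(\tau)) \cong \mathbb{Z}$. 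The topological input is immediate: by Definition \ref{def} the total space $\mathcal{W}_2(\tau)$ is $C^\infty$ diffeomorphic to $W_2 = \Tot(\mathcal{O}_{\mathbb{P}^1}(-2) \oplus \mathcal{O}_{\mathbb{P}^1})$, which is a rank-$2$ complex vector bundle retracting smoothly onto its zero section $\mathbb{P}^1$, so its integral cohomology agrees with that of $\mathbb{P}^1$.

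For the analytic input, the two-chart atlas $\mathfrak{U} = \{U, V\}$ of $\mathcal{W}_2(\tau)$ is Stein with Stein intersection $U \cap V \cong \mathbb{C}^* \times \mathbb{C}^2$, so Leray's theorem identifies $\HH^{\bullet}(\mathcal{W}_2(\tau), \mathcal{O})$ with \v{C}ech cohomology on $\mathfrak{U}$. Since $\mathfrak{U}$ has only two opens, $\HH^2(\mathcal{W}_2(\tau), \mathcal{O}) = 0$ automatically. For the vanishing $\HH^1(\mathcal{W}_2(\tau), \mathcal{O}) = 0$, the strategy is to show that every $f \in \mathcal{O}(U \cap V)$ admits a splitting $f = f_U|_{U \cap V} - f_V|_{U \cap V}$ with $f_U \in \mathcal{O}(U)$ and $f_V \in \mathcal{O}(V)$. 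The Laurent expansion of $f$ in $z$ has its nonnegative-$z$-power part already in $\mathcal{O}(U)$; when the negative-$z$-power part is rewritten in $V$-coordinates via the substitution $z = \xi^{-1}$, $u_2 = v_2$, $u_1 = \xi^2 v_1 - \xi \tau(v_2)$ coming from the transition of the Notation above, each monomial $z^l u_1^i u_2^s$ with $l < 0$ expands binomially into the $\xi$-powers $\xi^{-l+i}, \xi^{-l+i+1}, \ldots, \xi^{-l+2i}$, all of which are nonnegative since $-l > 0$. A Riemann removable-singularity argument along the hypersurface $\{\xi = 0\}$ then provides the required $f_V$.

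The principal obstacle will be justifying convergence of the rearranged series on the whole of $V = \mathbb{C}^3$. Because $\tau$ is allowed to be any entire function of $u_2$, the substitution produces a formal double series whose termwise convergence on compact subsets of $U \cap V$ must be upgraded to uniform convergence on compact subsets of $V$. The saving observation is that for each individual monomial of the negative-$z$ part the binomial expansion is a \emph{finite} polynomial in $\xi$ with coefficients polynomial in $v_1$ and entire in $\tau(v_2)$, so the substitution is analytic term-by-term; the formal nonnegativity of the $\xi$-powers then makes the extension across $\{\xi = 0\}$ automatic, and assembling the four ingredients in the long exact sequence proves the lemma.
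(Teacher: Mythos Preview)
Your proposal is correct and follows the same route as the paper: both invoke the exponential sheaf sequence together with the vanishing $\HH^1(\mathcal{W}_2(\tau),\mathcal{O})=\HH^2(\mathcal{W}_2(\tau),\mathcal{O})=0$ and the identification $\HH^2(\mathcal{W}_2(\tau),\mathbb{Z})=\mathbb{Z}$. The paper simply states these three facts without argument, whereas you supply the supporting details (Stein two-chart \v{C}ech computation, the retraction to $\mathbb{P}^1$, and the removable-singularity extension across $\{\xi=0\}$), so your write-up is strictly more complete than the paper's one-line proof.
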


\begin{proof}
It follows from the exponential sheaf sequence and the facts that $ \HH^1(\mathcal{W}_2(\tau), \mathcal{O})= \HH^2(\mathcal{W}_2(\tau), \mathcal{O}) = 0$ and $ \HH^2(\mathcal{W}_2(\tau), \mathbb{Z}) = \mathbb{Z}$.
\end{proof}

Assume $\tau(0) = 0$, then by 
 the moduli $\mathfrak{M}_j(\mathcal{W}_2(\tau))$ of  rank 2 bundles over the surface $\mathcal{W}_2(\tau)$ with splitting type $j$ we mean:
\[
\mathfrak{M}_j(\mathcal{W}_2(\tau)) = \Ext^1_{\mathcal{W}_2(\tau)}(\mathcal{O}(j),\mathcal{O}(-j))/\sim,
\]
where $\sim$ denotes bundle isomorphism, i.e., $p \sim q$ if the vector bundles defined by 
	\[
	\left[ \begin{matrix}
	z^j & p \\ 0 & z^{-j}
	\end{matrix} \right] 
	\textnormal{ and }
	\left[ \begin{matrix}
	z^j & q \\ 0 & z^{-j}
	\end{matrix} \right] 
	\]
are isomorphic, and recall that (to obtain a moduli space) we are considering
only extension classes of first order, that is of the form $p=  p_1(z,z^{-1})u_1+p_2(z,z^{-1})u_2$,
otherwise considering all orders we would have obtained moduli stacks in the sense of \cite{BBG}.

\begin{remark} \label{remark} For special values of $\tau$ we have:
\begin{itemize}
\item If there exists $u \in \mathbb{C}$ such that $\tau(u) =0$, then $\mathcal{W}_2(\tau)$ contains a $\mathbb{P}^1$.
The inclusion is given by:
\[
\begin{array}{rcl}
f_u\colon \mathbb{P}^1 & \hookrightarrow & \mathcal{W}_2(\tau) \\
z & \mapsto & (z,0,u) \\
\xi & \mapsto & (\xi,0,u) 
\end{array}
\]

\item For $\tau \equiv 1$, we have that $\mathcal{W}_2(\tau) = \mathcal{Z}_2 \times \mathbb{C}$, where $\mathcal{Z}_2$ is the nontrivial
(affine)  deformation of $Z_2$.
\end{itemize}

\end{remark}

\noindent {\bf Assumption}: in what follows we consider only those deformations $\mathcal{W}_2(\tau)$ that contain a $\mathbb P^1$ as 
a subvariety (algebraically or analytically). In particular, such deformations are not affine varieties, allowing for the possibility 
of positive dimensional extension groups.

\begin{lemma} \label{h1w2oj}
Elements of  $H^1(\mathcal{W}_2(\tau), \mathcal{O}(-j))$ can be expressed in the form
%
$$\sum_{s\geq 0 } \sum_{i=0}^{j-1} \sum_{l = 1}^{j-i-1} \sigma_{lis} z^{-l} u_1^i u_2^s.$$
\end{lemma}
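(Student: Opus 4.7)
The plan is to compute $H^{1}(\mathcal{W}_{2}(\tau),\mathcal{O}(-j))$ via \v{C}ech cohomology on the two-chart cover $\{U,V\}$. Trivializing $\mathcal{O}(-j)$ on each chart so that the transition function is $z^{-j}$, a 1-cocycle is a holomorphic function on $U\cap V=\mathbb{C}^{\ast}\times\mathbb{C}^{2}$, expanded in $U$-coordinates as
\[
\sigma=\sum_{n\in\mathbb{Z},\,i\ge 0,\,s\ge 0}\sigma_{nis}\,z^{n}u_{1}^{i}u_{2}^{s},
\]
and coboundaries take the form $\alpha-z^{-j}\beta|_{U\cap V}$ with $\alpha$ holomorphic on $U$ and $\beta$ holomorphic on $V$. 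The task is to show that, modulo such coboundaries, $\sigma$ is equivalent to a series supported on the monomials $z^{-l}u_{1}^{i}u_{2}^{s}$ with $1\le l\le j-i-1$, $0\le i\le j-1$, $s\ge 0$.

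First I would subtract an $\alpha\in\mathcal{O}(U)$ to kill every term with $n\ge 0$, leaving only Laurent tails $z^{-l}$, $l\ge 1$. The key computation is then the expansion of $z^{-j}\beta$ back into $U$-coordinates for a single monomial $\beta=\xi^{a}v_{1}^{b}v_{2}^{c}$, using $\xi=z^{-1}$, $v_{2}=u_{2}$, together with the binomial identity $v_{1}^{b}=(z^{2}u_{1}+z\tau(u_{2}))^{b}=\sum_{m=0}^{b}\binom{b}{m}z^{b+m}u_{1}^{m}\tau(u_{2})^{b-m}$, which yields
\[
z^{-j}\beta=\sum_{m=0}^{b}\binom{b}{m}\,z^{-j-a+b+m}\,u_{1}^{m}\,\tau(u_{2})^{b-m}\,u_{2}^{c}.
\]

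With this formula in hand I would kill, by descending induction on $i$, every monomial $z^{-l}u_{1}^{i}u_{2}^{s}$ lying in the excluded region $l+i\ge j$. For $i\ge 1$ I would take $\beta=\xi^{\,l+2i-j}v_{1}^{i}v_{2}^{s}$; the exponent $l+2i-j$ is non-negative because $l\ge j-i\ge j-2i$, the leading contribution $m=i$ gives exactly $z^{-l}u_{1}^{i}u_{2}^{s}$, and each secondary contribution $m<i$ has the shape $z^{-(l+i-m)}u_{1}^{m}\tau(u_{2})^{i-m}u_{2}^{s}$. The crucial arithmetic identity $(l+i-m)+m=l+i$ shows that every secondary term remains in the excluded region but has strictly smaller $u_{1}$-power $i'=m<i$, so inductively it can itself be killed; the base case $i=0,\,l\ge j$ is handled cleanly by $\beta=\xi^{\,l-j}v_{2}^{s}$, which produces no secondary contribution at all.

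The main obstacle is that each killing step spreads one target monomial across the entire $u_{2}$-direction, since $\tau(u_{2})^{i-m}$ is a whole power series in $u_{2}$; the construction must therefore be organized so that the $\beta$ assembled from countably many monomial corrections defines an honest holomorphic function on $V=\mathbb{C}^{3}$ rather than merely a formal series. This is controlled by the observation that every inductive step strictly increases the $z$-pole order $l$, so any fixed bidegree $(l,i,s)$ receives only finitely many corrections from the initial data, and the holomorphy of $\sigma$ on $U\cap V$ provides the coefficient growth estimates needed to guarantee convergence of $\beta$ on all of $V$.
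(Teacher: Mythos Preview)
Your strategy is sound and would work, but the paper finds a much shorter path that avoids the iteration and the convergence bookkeeping entirely.

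The key difference is the choice of coboundary. You kill $z^{-l}u_1^iu_2^s$ (for $l+i\ge j$) using $\beta=\xi^{\,l+2i-j}v_1^{\,i}v_2^{\,s}$, whose pullback $z^{-j}\beta$ hits the target monomial plus lower-$i$ secondary terms, forcing your descending induction on $i$. The paper instead exploits the identity
\[
\xi v_1-\tau(v_2)=z^{-1}\bigl(z^2u_1+z\tau\bigr)-\tau=zu_1,
\]
so that $u_1=\xi(\xi v_1-\tau)$ and hence $z^{j}\cdot z^{-l}u_1^iu_2^s=\xi^{\,l+i-j}(\xi v_1-\tau)^i v_2^s$. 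This expression is a polynomial in $\xi,v_1$ with entire coefficients in $v_2$, so it is holomorphic on $V$ precisely when $l+i\ge j$. Thus the paper simply transforms the whole Laurent tail of $\sigma$ to $V$-coordinates, drops the terms with $l+i\ge j$ (each of which is individually holomorphic on $V$), and transforms back; because $(\xi v_1-\tau)^i$ collapses to $(zu_1)^i$ under the inverse change, no cross-terms appear and one lands exactly on the stated normal form.

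What this buys the paper: no secondary corrections, no induction, and essentially no convergence issue beyond the standard Laurent splitting. What your approach buys: it is more mechanical and would apply verbatim even if one did not spot the identity $\xi v_1-\tau=zu_1$, at the cost of the inductive cleanup and the (correct but somewhat sketched) convergence argument in your last paragraph.
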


\begin{proof}
A 1-cocycle can be expressed in the form
\[
\sigma = \sum_{l = -\infty}^\infty \sum_{i \geq 0 } \sum_{s \geq 0 }  
\sigma_{lis} z^l u_1^i u_2^s .
\] 
 Then
\[
\sigma \sim \sum_{l \leq -1} \sum_{i \geq 0 } \sum_{s \geq 0 }  
\sigma_{lis} z^l u_1^i u_2^s 
\] 
because monomials with positive powers of $z$ are coboundaries.
%
%
 Changing coordinates we have $v_2=u_2$ and $v_1= z^2u_1+z\tau$, hence 
 $u_1= \xi^2v_1-\xi\tau$, so the expression for $\sigma$ transforms to 
\begin{align*}
T\sigma & = \sum_{l \leq -1} \sum_{i \geq 0 } \sum_{s\geq 0 }  
\sigma_{lis} z^{l+j} u_1^i u_2^s \\
& = \sum_{l \leq -1} \sum_{i \geq 0 } \sum_{s \geq 0 }  
\sigma_{lis} \xi^{-l-j} (\xi^2v_1- \xi\tau)^i v_2^s \\
& = \sum_{l \leq -1} \sum_{i \geq 0 } \sum_{s \geq 0 }  
\sigma_{lis} \xi^{-l-j+i} (\xi v_1- \tau)^i v_2^s .
\end{align*}
To improve  notation we change $l$ to $-l$ so that we have
\begin{align} \label{qq}
T\sigma & = \sum_{l \geq 1} \sum_{i \geq 0 } \sum_{s \geq 0 }  
\sigma_{lis} \xi^{l-j+i} (\xi v_1- \tau)^i v_2^s.
\end{align}
The holomorphic terms on the  $V$ chart 
are those in which the power of $\xi$ is greater than or equal to 0, so we can remove those terms. The ones that remain satisfy the condition
\[
l + i < j
\]
for $l \geq 1$ and $i \geq 0$. Therefore, we can rewrite  (\ref{qq}) as 
\begin{align*} 
T\sigma & = \sum_{s \geq 0 } \sum_{i=0}^{j-1} \sum_{l = 1}^{j-i-1}    
\sigma_{lis} \xi^{l-j+i} (\xi v_1- \tau)^i v_2^s.
\end{align*}
By changing back to the $U$ chart we have
\[
\sigma \sim \sum_{s\geq 0 } \sum_{i=0}^{j-1} \sum_{l = 1}^{j-i-1} \sigma_{lis} z^{-l} u_1^i u_2^s.
\]
\end{proof}

\begin{lemma}\cite[III.6.3.(c), III.6.7]{Har}  \label{ext-h1}
There is an isomorphism between 
$\Ext^1_{\mathcal{W}_2(\tau)}(\mathcal{O}(j_2),\mathcal{O}(j_1))$
and 
$\HH^1(\mathcal{W}_2(\tau), \mathcal{O}(j_1-j_2))$
given by:

\[
\begin{array}{rcl}
\Ext^1_{\mathcal{W}_2(\tau)}(\mathcal{O}(j_2),\mathcal{O}(j_1)) & \to & \HH^1(\mathcal{W}_2(\tau), \mathcal{O}(j_1-j_2)) \\
\left(\begin{matrix}
z^{-j_1} & p \\ 0 & z^{-j_2}
\end{matrix}\right) & \mapsto & z^{-j_2}p.
\end{array}.
\]
In particular, there is an isomorphism 
$$
\begin{array}{rcl}
\Ext^1_{\mathcal{W}_2(\tau)}(\mathcal{O}(j),\mathcal{O}(-j)) & \to & \HH^1(\mathcal{W}_2(\tau),\mathcal{O}(-2j)) \\
\left(\begin{matrix}
z^{j} & p \\ 0 & z^{-j}
\end{matrix}\right) & \mapsto & z^{-j}p.
\end{array}.
$$
\end{lemma}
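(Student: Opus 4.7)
The plan is to invoke the classical identification between extensions of coherent sheaves and the first cohomology of the $\SHom$ sheaf, as cited from Hartshorne, and then to make the explicit formula transparent by unwinding the \v{C}ech data on the two-chart cover $\{U,V\}$ of $\mathcal{W}_2(\tau)$.

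First I would apply the local-to-global spectral sequence for $\Ext$ (Hartshorne III.6.7). Since $\mathcal O(j_2)$ is a line bundle, hence locally free, the local $\mathrm{Ext}$-sheaves $\mathcal Ext^{q}(\mathcal O(j_2),\mathcal O(j_1))$ vanish for $q\geq 1$, so the spectral sequence collapses to the canonical isomorphism
$$\Ext^{1}_{\mathcal W_2(\tau)}(\mathcal O(j_2),\mathcal O(j_1)) \;\cong\; \HH^{1}\bigl(\mathcal W_2(\tau),\,\SHom(\mathcal O(j_2),\mathcal O(j_1))\bigr).$$
Combined with the standard identification $\SHom(\mathcal O(j_2),\mathcal O(j_1))=\mathcal O(j_2)^{\vee}\otimes\mathcal O(j_1)\cong\mathcal O(j_1-j_2)$, this produces the abstract form of the claimed isomorphism.

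Next, to pin down the explicit representative I would read off the \v{C}ech cocycle directly on the cover $\{U,V\}$. An extension $0\to\mathcal O(j_1)\to E\to\mathcal O(j_2)\to 0$ presented by the transition matrix $\bigl(\begin{smallmatrix} z^{-j_1}&p\\ 0 & z^{-j_2}\end{smallmatrix}\bigr)$ carries two tautological local splittings $s_U,s_V$ of the surjection $E\to\mathcal O(j_2)$, coming from the $U$- and $V$-trivializations of $E$ as a direct sum. On $U\cap V$ the difference $s_U-s_V$ is a section of $\SHom(\mathcal O(j_2),\mathcal O(j_1))$, and by general theory it is precisely the \v{C}ech $1$-cocycle representing the image of the extension class under the isomorphism above. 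A direct block-matrix calculation, using $T_{UV}$ and its inverse $T_{VU}$ written in the $U$-frame, produces the function $p$ multiplied by the transition factor that identifies $\SHom(\mathcal O(j_2),\mathcal O(j_1))$ with $\mathcal O(j_1-j_2)$ in the $U$-frame; in the convention used in the preceding Lemma \ref{h1w2oj} this factor is $z^{-j_2}$, yielding the cocycle representative $z^{-j_2}p$. The ``in particular'' statement is then the specialization $j_1=-j,\ j_2=j$, for which the representative becomes $z^{-j}p$.

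The abstract content of the lemma is standard and requires nothing beyond Hartshorne. The only delicate step is the bookkeeping at the end: one must make sure that the convention for expressing sections of $\mathcal O(j_1-j_2)$ in the $U$-frame is exactly the one used in Lemma \ref{h1w2oj}, so that the explicit factor $z^{-j_2}$ is the correct one to match what appears downstream when $\Ext^{1}$ is computed as the $\HH^{1}$ described in that lemma.
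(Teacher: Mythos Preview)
Your proposal is correct and in fact supplies more than the paper does: the paper gives no proof at all for this lemma, simply citing Hartshorne III.6.3(c) and III.6.7 in the lemma heading and moving on. Your unpacking of the local-to-global spectral sequence collapse and the explicit \v{C}ech bookkeeping is exactly what is implicit in that citation, so there is nothing to compare.
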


\begin{corollary}\label{cor-ext}
Elements of  $\,\Ext^1_{\mathcal{W}_2(\tau)}(\mathcal{O}(j),\mathcal{O}(-j))$ can be expressed in the form
\[
\sum_{s \geq 0 } \sum_{i=0}^{2j-1} \sum_{l = 1}^{2j-i-1} \sigma_{lis} z^{-l+j} u_1^i u_2^s.
\]
\end{corollary}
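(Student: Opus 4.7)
The plan is to combine the two preceding lemmas in the most direct way possible. By Lemma \ref{ext-h1} there is a canonical isomorphism
\[
\Ext^1_{\mathcal{W}_2(\tau)}(\mathcal{O}(j),\mathcal{O}(-j)) \;\xrightarrow{\ \sim\ }\; \HH^1(\mathcal{W}_2(\tau),\mathcal{O}(-2j)),\qquad p \longmapsto z^{-j}p,
\]
whose inverse is multiplication by $z^{j}$. So to obtain a normal form for extension classes it suffices to apply the normal form already established for sections of $\mathcal{O}(-2j)$ in first cohomology, and then multiply the resulting expression by $z^{j}$.

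First I would invoke Lemma \ref{h1w2oj} with $2j$ in place of $j$. Its proof uses only the transition rule $v_1 = z^2 u_1 + z\tau$, $v_2 = u_2$ of $\mathcal{W}_2(\tau)$ together with the twist exponent; nothing in the argument is sensitive to whether the exponent is $j$ or $2j$, so the conclusion adapts verbatim to give that every class in $\HH^1(\mathcal{W}_2(\tau),\mathcal{O}(-2j))$ is represented by a finite sum
\[
\sum_{s\geq 0}\sum_{i=0}^{2j-1}\sum_{l=1}^{2j-i-1} \sigma_{lis}\, z^{-l} u_1^{i} u_2^{s},
\]
with the constraint $l+i < 2j$ coming, as before, from requiring that the twisted monomials fail to be holomorphic on the $V$-chart.

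Next I would pull this back through Lemma \ref{ext-h1} by multiplying by $z^{j}$, which shifts the exponent of $z$ from $-l$ to $-l+j$ and leaves the ranges of $i$, $l$, $s$ unchanged. This yields precisely the claimed expression
\[
\sum_{s\geq 0}\sum_{i=0}^{2j-1}\sum_{l=1}^{2j-i-1} \sigma_{lis}\, z^{-l+j} u_1^{i} u_2^{s}
\]
for elements of $\Ext^1_{\mathcal{W}_2(\tau)}(\mathcal{O}(j),\mathcal{O}(-j))$. The only point to be careful about is that the index ranges transport correctly under the multiplication-by-$z^{j}$ map, which is immediate since the map is an isomorphism of graded modules over $\mathcal{O}$; there is no genuine obstacle, as the substantive cohomological computation is already contained in Lemma \ref{h1w2oj}.
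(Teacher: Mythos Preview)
Your proposal is correct and follows exactly the approach of the paper: apply Lemma~\ref{h1w2oj} with $2j$ in place of $j$ to get the normal form for $\HH^1(\mathcal{W}_2(\tau),\mathcal{O}(-2j))$, and then transport it to $\Ext^1$ via the isomorphism of Lemma~\ref{ext-h1} by multiplying by $z^{j}$. The paper's proof states precisely this in one line; you have simply unpacked the two steps.
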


\begin{proof} It follows from Lemma \ref{h1w2oj} applied to $\mathcal O(-2j)$ and Lemma \ref{ext-h1}.

\end{proof}

\begin{proposition}
Let $\mathcal{W}_2(\tau)$ be a deformation of $W_2$ that contains a $\mathbb{P}^1$ as a subvariety.
 Then every algebraic vector bundle on $\mathcal{W}_2(\tau)$ is filtrable. 
\end{proposition}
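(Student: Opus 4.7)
The plan is to imitate the proof of Theorem \ref{filW2} verbatim, after first determining how the normal bundle changes under the deformation. By hypothesis and Remark \ref{remark}, there exists $u \in \mathbb{C}$ with $\tau(u) = 0$; after the coordinate translation $u_2 \mapsto u_2 - u$ I may assume $\tau(0) = 0$, and I write $\ell \cong \mathbb{P}^1$ for the image of the resulting embedding $f_0\colon \mathbb{P}^1 \hookrightarrow \mathcal{W}_2(\tau)$. Differentiating the deformed transition $(\xi, v_1, v_2) = (z^{-1}, z^2 u_1 + z\tau(u_2), u_2)$ in the normal directions at $(z,0,0)$ yields the Jacobian
\[
\begin{pmatrix} z^2 & z\,\tau'(0) \\ 0 & 1 \end{pmatrix},
\]
which exhibits $N_{\ell, \mathcal{W}_2(\tau)}$ as an extension of $\mathcal{O}_{\mathbb{P}^1}$ by $\mathcal{O}_{\mathbb{P}^1}(-2)$. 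Since $\Ext^1(\mathcal{O}, \mathcal{O}(-2))$ is one-dimensional, this normal bundle is isomorphic to $\mathcal{O}(-2) \oplus \mathcal{O}$ when $\tau'(0) = 0$ and to $\mathcal{O}(-1)^{\oplus 2}$ when $\tau'(0) \neq 0$. In either case the conormal bundle $N^*_{\ell, \mathcal{W}_2(\tau)}$ has nonnegative splitting type, and so does every symmetric power of it.

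With the normal bundle understood, I would repeat the inductive argument of Theorem \ref{filW2}. Given an algebraic bundle $E$ of rank $r \geq 2$ on $\mathcal{W}_2(\tau)$, let $(a_1 \geq \cdots \geq a_r)$ be the splitting type of $E|_\ell$ and choose an injection $j\colon \mathcal{O}_\ell(a_r) \hookrightarrow E|_\ell$ whose cokernel is locally free of rank $r-1$. The nowhere-vanishing section $s$ of $E(-a_r)|_\ell$ determined by $j$ must be extended to the formal neighborhood $\hat\ell$. The same conormal exact sequences
\[
0 \to E(-a_r) \otimes S^t(N^*_{\ell, \mathcal{W}_2(\tau)}) \to E(-a_r) \otimes \mathcal{O}^{(t+1)}_\ell \to E(-a_r) \otimes \mathcal{O}^{(t)}_\ell \to 0
\]
used in the proof of Theorem \ref{filW2} identify the obstructions to extending $s$ successively with classes in $\HH^1(\ell, E(-a_r) \otimes S^t(N^*_{\ell, \mathcal{W}_2(\tau)}))$. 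These vanish because $E(-a_r)|_\ell$ has nonnegative splitting type (all $a_i - a_r \geq 0$) and so does $S^t(N^*_{\ell, \mathcal{W}_2(\tau)})$; hence $s$ extends to $\hat\ell$.

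Finally, Grothendieck's existence theorem, applied exactly as in Theorem \ref{filW2}, promotes the formal section to a line subbundle $L_{a_r} \hookrightarrow E$ on an actual analytic neighborhood of $\ell$; algebraicity of $E$ then extends this subsheaf uniquely to all of $\mathcal{W}_2(\tau)$, yielding a short exact sequence $0 \to L_{a_r} \to E \to F_{r-1} \to 0$, and the induction hypothesis applied to $F_{r-1}$ completes the filtration. The main obstacle I anticipate is precisely this algebraization step: since $\mathcal{W}_2(\tau)$ is noncompact, classical Grothendieck existence does not strictly apply, and one must argue as in the undeformed case, using that $\mathcal{W}_2(\tau)$ deformation retracts onto its compact core $\ell$ so that algebraic subsheaves on a Zariski neighborhood of $\ell$ extend uniquely to the whole threefold.
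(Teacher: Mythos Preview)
Your approach is essentially the same as the paper's: compute the conormal bundle of $\ell$ in $\mathcal{W}_2(\tau)$, observe that it (and hence all its symmetric powers) has nonnegative splitting type, and then invoke the proof of Theorem~\ref{filW2} verbatim. The paper's proof is in fact a two-line sketch: it writes down the transition matrix for the cotangent bundle, asserts $N^*_{\ell,\mathcal{W}_2(\tau)} = \mathcal{O}(2)\oplus\mathcal{O}$, and refers to Theorem~\ref{filW2}.

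One point where you are actually more careful than the paper: your case distinction on $\tau'(0)$ is correct, and the paper's blanket assertion that $N^*_{\ell,\mathcal{W}_2(\tau)} = \mathcal{O}(2)\oplus\mathcal{O}$ holds only when $\tau'(0)=0$. When $\tau'(0)\neq 0$ the normal transition $\begin{pmatrix} z^2 & z\tau'(0) \\ 0 & 1 \end{pmatrix}$ represents the nontrivial class in $\Ext^1(\mathcal{O},\mathcal{O}(-2))$, giving $N_{\ell,\mathcal{W}_2(\tau)} \cong \mathcal{O}(-1)^{\oplus 2}$ as you say. This does not affect the argument, since in either case the conormal splitting type is nonnegative and the obstruction groups in the proof of Theorem~\ref{filW2} still vanish. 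Your concern about the algebraization step on a noncompact variety is legitimate; the paper simply does not comment on it, relying on the same passage from $\hat\ell$ to the algebraic threefold that was already implicit in deducing the Corollary to Theorem~\ref{filW2}.
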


\begin{proof}
The transition matrix for the cotangent bundle of $\mathcal{W}_2(\tau)$ is given by:
\[
\left[\begin{matrix}
-z^{2} & 0 & 0 \\
2zu_1 + \tau(u_2) & z^{-2} & -z^{-1}\tau'(u_2) \\
0 & 0 & 1
\end{matrix} \right]
\]
We then have  the conormal bundle $N_{\mathbb{P}^1, \mathcal{W}_2(\tau)}^\ast = \mathcal{O}_{\mathbb{P}^1}(2) \oplus \mathcal{O}_{\mathbb{P}^1}$,
and we use the same proof as Theorem  \ref{filW2}. 
\end{proof}

\begin{example}\label{group}

An application of Corollary  \ref{cor-ext}  shows that 
 $\Ext^1_{\mathcal{W}_2(\tau)}(\mathcal{O}(2),\mathcal{O}(-2))$ is generated over $\mathbb C$ by monomials of the form
$zu_2^s$, $u_2^s$, $z^{-1}u_2^s$, $zu_1u_2^s$, $u_1u_2^s$,  $zu_1^2u_2^s$ for $s \geq 0$.
So, on the first formal neighborhood we have, in principle, for any $\tau$, generators 
$zu_2$, $u_2$, $z^{-1}u_2$, $zu_1$, $u_1$, some of which may still be zero.
In fact, we observe that $u_1$ will be zero in the moduli space for any choice of $\tau$ because we have the equality
\[
\left[
\begin{matrix}
z^2 & 0 \\ 0 & z^{-2}
\end{matrix}
\right]
=
\left[
\begin{matrix}
1 & 0 \\ 0 & z^{-2}
\end{matrix}
\right]
\left[
\begin{matrix}
z^2 & u_1 \\ 0 & z^{-2}
\end{matrix}
\right]
\left[
\begin{matrix}
1 & -u_1 \\ 0 & z^2
\end{matrix}
\right]
\sim
\left[
\begin{matrix}
z^2 & u_1 \\ 0 & z^{-2}
\end{matrix}
\right],
\]
which is an isomorphism of vector bundles, 
therefore $u_1$ defines the split bundle.
We conclude that the first formal neighborhood of  $\Ext^1_{\mathcal{W}_2(\tau)}(\mathcal{O}(2),\mathcal{O}(-2))$ is generated over $\mathbb C$
 by the monomials $zu_2$, $u_2$, $z^{-1}u_2$, $zu_1$, some of which might still be zero, 
 depending on the choice of $\tau$.
\end{example}

\begin{notation}
We  denote by $E_1$ and  $E_2$ the vector bundles  over $\mathcal{W}_2$ defined respectively by the following transition matrices:
$$
E_1 = 
\left[\begin{matrix}
z^2 & zu_1 \\
0 & z^{-2}
\end{matrix}\right], \quad
E_2 = 
\left[\begin{matrix}
z^2 & zu_2 \\
0 & z^{-2}
\end{matrix}\right].
$$
\end{notation}

\begin{lemma}\label{noniso}
The vector bundles $E_1$ and  $E_2$ over $\mathcal{W}_2$ are not isomorphic.

\end{lemma}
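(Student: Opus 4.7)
The plan is to distinguish $E_1$ and $E_2$ by restricting them to a rational curve inside $\mathcal{W}_2$ along which the two transition matrices reduce differently. Under the standing Assumption, $\mathcal{W}_2=\mathcal{W}_2(\tau)$ contains a $\mathbb{P}^1$ which by Remark \ref{remark} arises from some root $t\in\mathbb{C}$ of $\tau$; I would choose such a $t$ with $t\neq 0$ (for the original $W_2$, where $\tau=0$, any nonzero $t$ works). Define
\[
\ell_t\colon \mathbb{P}^1 \hookrightarrow \mathcal{W}_2, \qquad z \mapsto (z,0,t), \quad \xi \mapsto (\xi,0,t).
\]
This is a closed embedding since $u_1=0$ and $u_2=t$ force $v_1=z^2u_1+z\tau(u_2)=z\tau(t)=0$ and $v_2=u_2=t$.

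Setting $u_1=0$ and $u_2=t$ in the defining matrices, the restricted transitions on $\mathbb{P}^1$ are
\[
E_1|_{\ell_t}=\begin{pmatrix} z^2 & 0 \\ 0 & z^{-2}\end{pmatrix}, \qquad E_2|_{\ell_t}=\begin{pmatrix} z^2 & zt \\ 0 & z^{-2}\end{pmatrix}.
\]
The first manifestly has splitting type $(2,-2)$, representing $\mathcal{O}_{\mathbb{P}^1}(2)\oplus\mathcal{O}_{\mathbb{P}^1}(-2)$. To analyse the second I would apply Lemma \ref{ext-h1} to identify its extension class on $\mathbb{P}^1$ with $z^{-2}\cdot zt=tz^{-1}$, which lies in $H^1(\mathbb{P}^1,\mathcal{O}(-4))=\langle z^{-1},z^{-2},z^{-3}\rangle$ and is nonzero for $t\neq 0$. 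Equivalently one verifies directly that the entry $zt$ cannot be eliminated by any triangular holomorphic change of frame on $\mathbb{P}^1$, since any such change contributes only terms of the form $z^2\alpha(z)-z^{-2}\beta(\xi)$, whose $z$-degrees are all $\geq 2$ or $\leq -2$. Hence by Grothendieck's splitting theorem $E_2|_{\ell_t}\cong\mathcal{O}(a)\oplus\mathcal{O}(-a)$ for some $a<2$.

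Because the splitting types $(2,-2)$ and $(a,-a)$ with $a<2$ differ, the restrictions $E_1|_{\ell_t}$ and $E_2|_{\ell_t}$ are not isomorphic on $\mathbb{P}^1$. Since any isomorphism $E_1\cong E_2$ over $\mathcal{W}_2$ would pull back to one on $\ell_t$, this is a contradiction, and $E_1\not\cong E_2$. The delicate point is the existence of an embedded $\mathbb{P}^1$ at a coordinate $t\neq 0$: this is automatic for $W_2$ itself but relies on the Assumption for a general $\mathcal{W}_2(\tau)$, and if $0$ were the only root of $\tau$ one would have to refine the argument by restricting to the first formal neighborhood of $\ell_0$ instead.
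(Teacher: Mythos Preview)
Your restriction-to-a-curve idea is clean, but it does not actually apply to the case at hand. In this paper $\mathcal{W}_2$ (without an argument) denotes the specific deformation with $\tau(u_2)=u_2$; this is made explicit in the proof of Theorem~\ref{decrease}, and the paper's own proof of the Lemma uses $v_1=z^2u_1+zu_2$. For this $\tau$ the \emph{only} root is $t=0$, so there is no $\ell_t$ with $t\neq 0$ available. At $t=0$ both restrictions collapse to the same split bundle $\mathcal{O}(2)\oplus\mathcal{O}(-2)$, and your invariant does not separate $E_1$ from $E_2$. You flag this possibility in your last sentence, but that is precisely the situation of the Lemma, so the argument as written has a genuine gap rather than a removable edge case.

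The paper proceeds exactly along the line you defer: it restricts to the first infinitesimal neighbourhood $\ell^{(1)}$ of the zero section and computes $h^0(\ell^{(1)},E_i\vert_{\ell^{(1)}})$, showing that $s_0=(0,1)^T$ extends over $\ell^{(1)}$ for $E_2$ but not for $E_1$, whence $h^0=5$ versus $h^0=6$. So your proposal would become correct once you carry out that formal-neighbourhood computation, but then it is essentially the paper's proof. Your curve-restriction argument is valid and efficient for deformations $\mathcal{W}_2(\tau)$ whose $\tau$ admits a nonzero root, just not for the one actually under consideration.
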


\begin{proof}
 We calculate the generators for $\HH^0(\ell^1, E_i\vert_{\ell^1})$ for $i=1, 2$ as  $\mathbb C$-vector spaces.
 
On the 0-th formal neighborhood, that is over $\ell^0 = \mathbb P^1$ 
 the global sections of  $ E_i\vert_{\ell^0}$ for both $i=1, 2$ are those pulled back from $\mathcal O(2)$, namely 
 $$
s_0=\left[\begin{matrix}
0\\
1
\end{matrix}\right],
s_1=\left[\begin{matrix}
0\\
z
\end{matrix}\right],
s_2=\left[\begin{matrix}
0\\
z^2
\end{matrix}\right].
 $$
We now verify which of these extend to the first formal neighborhood. 
For $n=0,1,2$ let $\tilde s_n=\left[\begin{matrix}
a\\
z^n
\end{matrix}\right]$ denote a possible  extension of $s_n$ to a section of $E_i\vert_{\ell^1}$. 
\vspace{2mm}

\noindent{\sc Case of $E_1$:} We claim that $s_0$  does not extend to $\ell^1$. 
For such an extension $\tilde s_0$  to exist, we must  
have a function $a$ holomorphic on $U= \mathbb C[[z,u_1,u_2]]$ such that 
$$
\left[\begin{matrix}
z^2 & zu_1 \\
0 & z^{-2}
\end{matrix}\right]
\left[\begin{matrix}
a\\
1
\end{matrix}\right]=\left[\begin{matrix}
z^2a+zu_1\\
z^{-2}
\end{matrix}\right]$$
is holomorphic on $V= \mathbb C[[\xi,v_1,v_2]] = \mathbb C[[z^{-1},z^2u_1+zu_2,u_2]]$.
But $zu_1= \xi v_1-v_2$ is itself holomorphic on $V$, therefore we must have $z^2a$ holomorphic on $V$, which 
is impossible on neighborhood 1. So, $s_0$ does not extend to a section of $E_1\vert_{\ell^1}$. 
Direct verification then shows  that $\HH^0(\ell^1, E_1)$ is generated by
$$
\tilde s_1=\left[\begin{matrix}
-u_1\\
z
\end{matrix}\right], \quad
\tilde s_2=\left[\begin{matrix}
-zu_1\\
z^2
\end{matrix}\right].
$$

\noindent{\sc Case of $E_2$:} Direct verification shows that
$ E_2\vert_{\ell1}$ admits the sections:
$$
\tilde s_0=\left[\begin{matrix}
u_1\\
1
\end{matrix}\right],\quad
\tilde s_1= \left[\begin{matrix}
-u_2\\
z
\end{matrix}\right],\quad
\tilde s_2=\left[\begin{matrix}
-u_2\\
z^2
\end{matrix}\right].
$$
We conclude that $h^0(\ell^1, E_1\vert_{\ell^1})=5$ but   $h^0(\ell^1, E_2\vert_{\ell^1})=6$, hence $E_1\nsim E_2$.
\end{proof}

\begin{theorem}\label{decrease}
Some deformations of $W_2$ have the effect of decreasing the dimension of $\mathfrak{M}_2(W_2)$ while keeping the moduli nontrivial.
\end{theorem}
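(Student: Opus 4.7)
The plan is to choose a specific nontrivial deformation of $W_2$ on which the moduli stays nonempty but has strictly smaller dimension than $\dim \mathfrak{M}_2(W_2) = 3$ (Theorem \ref{dimensionmoduli}). I take $\tau = u_2$; since $\tau(0)=0$, the deformation $\mathcal{W}_2(u_2)$ contains a $\mathbb{P}^1$, placing it squarely under the standing assumption of this subsection.

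Non-triviality of the moduli is immediate from Lemma \ref{noniso}: the bundles $E_1 = E_{zu_1}$ and $E_2 = E_{zu_2}$ have different values of $h^0$ on their restrictions to $\ell^1$ and hence are not isomorphic on $\mathcal{W}_2(u_2)$, so $\mathfrak{M}_2(\mathcal{W}_2(u_2))$ contains at least two distinct points.

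For the dimension drop, the key observation is that the new transition function $v_1 = z^2u_1 + zu_2$ supplies a new coboundary in $H^1(\mathcal{W}_2(u_2), \mathcal{O}(-4))$ that is not present on $W_2$. Indeed, the $V$-holomorphic cochain $f_V = v_1$ yields, via $z^{-4}f_V$, the relation $[z^{-2}u_1 + z^{-3}u_2] = 0$, equivalently $[u_1] + [z^{-1}u_2] = 0$ in $\Ext^1_{\mathcal{W}_2(u_2)}(\mathcal{O}(2), \mathcal{O}(-2))$ after identification via Lemma \ref{ext-h1}. Combined with the $\tau$-independent bundle isomorphism $E_{u_1} \cong E_0$ from Example \ref{group}, this forces $E_{z^{-1}u_2} \cong E_0$ as well---an identification unavailable over $W_2$, where $v_1 = z^2 u_1$ yields only the unrelated relation $[u_1]=0$. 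Thus a whole one-parameter family of first-order extension classes that parametrised non-split bundles on $W_2$ is collapsed onto the split bundle on $\mathcal{W}_2(u_2)$, which drops the dimension of the moduli of non-split bundles while preserving the non-split points witnessed by Lemma \ref{noniso}.

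The hard part will be ruling out further hidden collapses that could trivialise the moduli entirely, and verifying that the drop is genuine dimension rather than the removal of an isolated stratum. I would carry this out by adapting the $h^0$-computation of Lemma \ref{noniso} to the remaining first-order generators $\{zu_1, zu_2, u_2\}$, checking that they define pairwise non-isomorphic bundles on $\mathcal{W}_2(u_2)$ and thereby pinning down a positive-dimensional residual moduli of strictly smaller dimension than $\mathfrak{M}_2(W_2)$.
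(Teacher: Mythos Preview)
Your proposal is correct and follows essentially the same route as the paper: choose $\tau=u_2$, use the new coboundary coming from $v_1=z^2u_1+zu_2$ to collapse $[z^{-1}u_2]$ onto $[u_1]$ (which is already split by Example~\ref{group}), and invoke Lemma~\ref{noniso} for nontriviality. The extra work you propose in your last paragraph---verifying that all three remaining generators $zu_1$, $zu_2$, $u_2$ are pairwise non-isomorphic---is not carried out in the paper and is not needed for the theorem as stated; the paper is content with $\dim\mathfrak{M}_2(\mathcal{W}_2(u_2))\leq 2<3$ together with the two distinct points from Lemma~\ref{noniso}.
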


\begin{proof}

In this proof we will denote by $\mathcal{W}_2$ the deformation given by $\tau(u_2) = u_2$.
We will show that the deformed space $\mathfrak{M}_2(\mathcal{W}_2)$ is strictly smaller than the 
original $ \mathfrak{M}_2(W_2)$.

We will use the isomorphism
\[
\left[
\begin{matrix}
z^j & p \\ 0 & z^{-j}
\end{matrix}
\right]
\sim
\left[
\begin{matrix}
1 & \beta \\ 0 & 1
\end{matrix}
\right]
\left[
\begin{matrix}
z^j & p \\ 0 & z^{-j}
\end{matrix}
\right]
\left[
\begin{matrix}
1 & b \\ 0 & 1
\end{matrix}
\right]
=
\left[
\begin{matrix}
z^j & p + z^jb + z^{-j}\beta \\ 0 & z^{-j}
\end{matrix}
\right],
\]
where $b$ is a holomorphic function on $U$ and $\beta$ is a holomorphic function on $V$. 
It implies that the cocycles $p$ and $p + z^jb + z^{-j}\beta$ define isomorphic vector bundles.

The space $\Ext^1_{{W}_2}(\mathcal{O}(2), \mathcal{O}(-2))$ is generated by the non-zero cocycles $z u_1$, $z^{-1}u_2$, $u_2$, $zu_2$.
The space $\Ext^1_{\mathcal{W}_2(\tau)}(\mathcal{O}(2), \mathcal{O}(-2))$ of the deformation $\mathcal{W}_2(\tau)$ is 
generated by the same cocycles, but in these case we will find out that there are further relations among them.

In the case of $\mathcal{W}_2$, by choosing $b=0$ and $\beta=-v_1=-z^2 u_1-zu_2$ we see that 
$u_1$ and $z^{-1}u_2$ define isomorphic bundles, i.e.,
\[
\left[
\begin{matrix}
z^2 & z^{-1}u_2 \\ 0 & z^{-2}
\end{matrix}
\right]
\sim
\left[
\begin{matrix}
z^2 & u_1 \\ 0 & z^{-2}
\end{matrix}
\right]
\sim 
\left[
\begin{matrix}
z^2 & 0 \\ 0 & z^{-2}
\end{matrix}
\right]
,
\]
by Example \ref{group}. 
We conclude that the dimension of $\mathfrak{M}_2(\mathcal{W}_2(\tau)) \leq 2$, hence strictly
smaller than the dimension of $\mathfrak{M}_2(W_2)$, which is 3, by Lemma \ref{dim-split}.

To prove that  the moduli $\mathfrak{M}_2(\mathcal{W}_2)$
is nontrivial, we use Lemma~\ref{noniso} which shows that the classes $z u_1$, and $zu_2$, define nonisomorphic bundles.
\end{proof}

Combining Theorems \ref{decrease} and \ref{Wq} we obtain:

\begin{corollary}\label{fedmoduli} 
For $k\geq 2$ the threefolds $W_k$ admit infinitely many isomorphism classes of  deformations.
Some of these deformations support nontrivial moduli of vector bundles. 
\end{corollary}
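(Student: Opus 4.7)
The plan is to assemble the corollary directly from the two cited theorems together with the moduli computations already carried out for $W_2$. First, I would invoke Corollary \ref{Wkinf}, which combines Theorems \ref{W2def} and \ref{Wq}, to settle the first sentence: for every $k\geq 2$, the threefold $W_k$ admits infinitely many pairwise non-isomorphic deformations. This part is essentially a citation.

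For the second sentence I would split into the cases $k=2$ and $k>2$. When $k=2$, Theorem \ref{decrease} already exhibits a nontrivial deformation $\mathcal{W}_2(\tau)$ with $\tau(u_2)=u_2$ whose moduli space $\mathfrak{M}_2(\mathcal{W}_2(\tau))$ is positive dimensional, containing the classes of the bundles $E_1$ and $E_2$ shown to be non-isomorphic in Lemma \ref{noniso}. When $k>2$, I would apply Theorem \ref{Wq} in the particular instance $q=2$: the corresponding member of the deformation family \eqref{familywk} of $W_k$ is biholomorphic to $W_2$. Since $W_2$ carries the positive dimensional moduli $\mathfrak{M}_j(W_2)$ of dimension $4j-5$ for $j\geq 2$ (Theorem \ref{dimensionmoduli}), this deformation of $W_k$ supports nontrivial moduli of vector bundles.

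One could also strengthen the conclusion by chaining the two theorems: Theorem \ref{Wq} realises $\mathcal{W}_2(\tau)$ itself as a deformation of $W_k$ for every $k>2$, so Theorem \ref{decrease} then transports the nontrivial moduli of Lemma \ref{noniso} to a deformation of each $W_k$ with $k\geq 2$. The only point requiring a brief check is that the notion of \emph{deformation} in Definition \ref{def} is used compatibly by Theorems \ref{Wq} and \ref{decrease}, which is immediate since both parametrise transition functions on the same pair of charts $(U,V)$ by a complex disc.

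The main obstacle I foresee is purely expository rather than mathematical: one must be careful to state the second sentence as an existence claim (\emph{some} deformations support nontrivial moduli), because the deformations of $W_k$ supplied by Corollary \ref{Wkinf} are indexed by continuous parameters whose moduli spaces we have not computed in full generality; only the distinguished fibres identified with $W_2$ or with $\mathcal{W}_2(\tau)$ are known to carry the required nontrivial moduli.
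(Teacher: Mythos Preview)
Your proposal is correct and follows essentially the same route as the paper, which simply says ``Combining Theorems \ref{decrease} and \ref{Wq}'' without further elaboration. If anything, you are more careful than the paper: you correctly separate the roles of Corollary \ref{Wkinf} (equivalently Theorems \ref{W2def} and \ref{Wq}) for the first sentence and of Theorems \ref{decrease} and \ref{Wq} for the second, and you spell out the case split $k=2$ versus $k>2$ that the paper leaves implicit.
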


\section*{Acknowledgements}
 \noindent
 We are thankful to the referee whose comments and suggestions helped us to improve the paper.
 We completed this paper during a gathering of the Research in Pairs Group 
 on Deformation Theory supported by  the Centro Internazionale per la Ricerca Matematica, of
Fondazione Bruno Kessler, in Povo, Trento. We are very thankful to CIRM 
for the hospitality and wonderful working conditions. Our special thanks to the director Marco Andreatta. 
 
 Ballico was  partially supported by MIUR and GNSAGA of INdAM (Italy).
 Gasparim was partially supported by  a Simons Associateship of the Abdus Salam International Centre for Theoretical Physics, Italy and 
 the Vicerrector\'ia de Investigaci\'on y Desarrollo Tecnol\'ogico de la  Universidad Cat\'olica del Norte (Chile).
 Suzuki acknowledges support from  Beca Doctorado Nacional CONICYT -- Folio 21160257.
 Gasparim and Suzuki express thanks for the  support of  the Network NT08 of the Office of External Activities, ICTP.

\bibliography{Wkbib}{}
\bibliographystyle{alpha}

\end{document}